\theoremstyle{plain}
\newtheorem{theorem}{Theorem}[section]
\newtheorem{proposition}[theorem]{Proposition}
\newtheorem{lemma}[theorem]{Lemma}
\newtheorem{corollary}[theorem]{Corollary}
\newtheorem{maintheorem}{Theorem}
\theoremstyle{definition}
\newtheorem{remark}[theorem]{Remark}
\newtheorem{example}[theorem]{Example}
\newtheorem{definition}[theorem]{Definition}
\newcommand{\field}[1]{\mathbb{#1}}
\newcommand{\NN}{\field{N}}
\newcommand{\vep}{\delta}
\newcommand{\cF}{{\mathcal F}}
\newcommand{\Leb}{\operatorname{{Leb}}}
\title[Semigroup actions of expanding maps]{Semigroup actions of expanding maps}
\begin{document}

\author[M. Carvalho]{Maria Carvalho}
\address{CMUP \& Departamento de Matem\'atica, Universidade do Porto, Portugal.}
\email{mpcarval@fc.up.pt}

\author[F. Rodrigues]{Fagner B. Rodrigues}
\address{Departamento de Matem\'atica, Universidade Federal do Rio Grande do Sul, Brazil.}
\email{fagnerbernardini@gmail.com}

\author[P.Varandas]{Paulo Varandas}
\address{Departamento de Matem\'atica, Universidade Federal da Bahia, Brazil.}
\email{paulo.varandas@ufba.br}

\keywords{Semigroup actions, expanding maps, skew-product, topological Markov chains, zeta function, equilibrium states}

\subjclass[2000]{
Primary: 37B05, 
37B40 
Secondary: 37D20 
37D35; 
37C85  
}

\maketitle
\begin{abstract}
We consider semigroups of Ruelle-expanding maps, parameterized by random walks on the free semigroup, with the aim of examining their complexity and exploring the relation between intrinsic properties of the semigroup action and the thermodynamic formalism of the associated skew-product. In particular, we clarify the connection between the topological entropy of the semigroup action and the growth rate of the periodic points, establish the main properties of the dynamical zeta function of the semigroup action and prove the existence of stationary probability measures.
\end{abstract}

\section{Introduction}

In the mid seventies the thermodynamic formalism was brought from statistical mechanics to dynamical systems by the pioneering work of Sinai, Ruelle and Bowen~\cite{Ru1, Bowen, Si72}. The correspondence between one-dimensional lattices and uniformly hyperbolic dynamics conveyed several notions from one setting to the other, introducing, via Markov partitions, Gibbs measures and equilibrium states into the realm of dynamical systems; see, for instance, \cite{Bo75}. 
Within non-invertible dynamics, a complete description of the thermodynamic formalism has been established for Ruelle-expanding maps~\cite{Ru2} and for expansive maps with a specification property~\cite{Ru92, HR}. In particular, it is known that for every potential under some regularity condition there exists a unique equilibrium state, 
which is a Gibbs measure 
and has exponential decay of correlations.

The classical strategy to prove these properties ultimately relies on the analysis of the spectral properties of the Perron-Fr\"obenius transfer operator, and we may extend this method to finitely generated group actions. Yet, the attempts to generalize the previous results have so far been riddled with difficulties, and a global theory is still unknown. Some success has been registered within continuous actions of finitely generated abelian groups. More precisely, the statistical mechanics of expansive $\mathbb{Z}^d$-actions satisfying a specification property has been studied by Ruelle in \cite{Ru73}, after introducing a suitable notion of pressure 
and discussing its link with measure theoretical entropy and free energy. 
The crucial ingredient in this context has been the fact that continuous $\mathbb{Z}^d$-actions on compact spaces admit probability measures invariant under every continuous map involved in the group action. With it, Ruelle proved a variational principle for the topological pressure and built equilibrium states as the class of pressure maximizing invariant probability measures. This duality between topological and measure theoretical complexity of the dynamical system has been later used by Eizenberg, Kifer and Weiss~\cite{EKW} to establish large deviations principles for $\mathbb{Z}^d$-actions satisfying a specification property.

A unified approach to the thermodynamic formalism for continuous group actions in the absence of probability measures invariant under all elements of the group is still incomplete. Although these actions are not dynamical systems, a few definitions of topological pressure have been proposed, although most of them unrelated and assuming either abelianity, amenability or some growth rate of the corresponding group. Inspired by the notion of complexity presented by Bufetov in \cite{Bufetov}, in the context of skew-products, where no commutativity or conditions on the semigroup growth rate are required, the second and third named authors introduced in  \cite{RoVa1} a notion of topological pressure for a semigroup action and showed that it reflects the complex behavior of the action. 
In the present paper we push this analysis further, considering semigroups of Ruelle-expanding maps. We relate the notion of topological entropy of a semigroup action introduced in \cite{RoVa1} with the growth of the set of periodic orbits, the concepts of fibered and relative entropies and the radius of convergence of a dynamical zeta function for the semigroup action. Moreover, we generalize the classical Ruelle-Perron-Fr\"obenius transfer operator and construct stationary measures and equilibrium states for semigroup actions of $C^2$ expanding maps. 
Meanwhile, we examine how the choice of the random walk in the semigroup unsettles the ergodic properties of the action.

\section{Setting}\label{sec:setting}

Let $M$ be a compact metric space and $C^0(M)$ denote the space of all continuous observable functions $\psi: M \to \mathbb R$. Given a finite set of continuous maps $g_i:M \to M$, $i \in \{1,2,\ldots,p\}$, $p\ge 1$, and the finitely generated semigroup $(G,\,\circ)$ with the finite set of generators $G_1=\{id, g_1, g_2, \dots, g_p\}$, we will write
$$G=\bigcup_{n\in \mathbb N_0} G_n$$
where $G_0=\{id\}$ and $\underline g \in G_n$ if and only if $\underline g=g_{i_n} \dots g_{i_2} g_{i_1}$, with $g_{i_j} \in G_1$ (for notational simplicity's sake we will use $g_j \, g_i$ instead of the composition $g_j\,\circ\, g_i$). A semigroup can have multiple generating sets. We will assume that the generator set $G_1$ is minimal, meaning that no function $g_j$, for $j = 1, \ldots, p$, can be expressed as a composition from the remaining generators.
\subsubsection*{\emph{\textbf{Free semigroups}}} Observe that each element $\underline g$ of $G_n$ may be seen as a word that originates from the concatenation of $n$ elements in $G_1$. Clearly, different concatenations may generate the same element in $G$. Nevertheless, in most of the computations to be done, we shall consider different concatenations instead of the elements in $G$ they create. One way to interpret this statement is to consider the itinerary map
$$
\begin{array}{rccc}
\iota : & F_p & \to & G \\
	& \underline i=i_n \dots i_1 & \mapsto & \underline g_{\underline i} := g_{i_n} \dots g_{i_1}
\end{array}
$$
where $F_p$ is the free semigroup with $p$ generators, and to regard concatenations on $G$ as images by $\iota$ of paths on $F_p$.

Set $G_1^* =G_1 \setminus \{id\}$ and, for every $n\ge 1$, let $G_n^*$ denote the space of concatenations of $n$ elements in $G_1^*$. To summon each element $\underline{g}$ of $G^*_n$, we will write $|\underline{g}|=n$ instead of $\underline g\in G^*_n$.
In $G$, one consider the semigroup operation of concatenation defined as usual: if $\underline{g}=g_{i_n} \dots g_{i_2} g_{i_1}$ and $\underline{h}=h_{i_m} \dots h_{i_2} h_{i_1}$, where $n=|\underline g|$ and $m=|\underline h|$, then
$$
\underline{g}\,\underline{h}=g_{i_n} \dots g_{i_2} g_{i_1} h_{i_m} \dots h_{i_2} h_{i_1} \in G_{m+n}^*.
$$
We then say that the finitely generated semigroup $G$ induces a \emph{semigroup action} $S: G \times M \to M$ in $M$ if, for any $\underline{g},\,\underline{h}  \in G$ and all $x \in M$, we have $S(\underline{g}\,\underline{h},x)=S(\underline{g}, S(\underline{h},x)).$ The action $S$ is said to be continuous if, for any $\underline g \in G$, the map $\underline g : M \to M$ given by $\underline g(x) = S(\underline{g},x)$ is continuous. As usual, $x\in M$ is said to be a \emph{fixed point} for $\underline g \in G$ if $\underline g(x)=x$; the set of these fixed points will be denoted by $\text{Fix}(\underline g)$. A point $x \in M$ is said to be a \emph{periodic point of period $n$} if there exists $\underline{g} \in G_n^*$ such that $\underline{g}(x) = x$. We let $\text{Per}(G_n)= \bigcup_{|\underline{g}|=n} \,\text{Fix}(\underline{g})$ denote the set of all periodic points of period $n$. Accordingly, $\text{Per}(G)=\bigcup_{n\ge 1} \text{Per}(G_n)$ will stand for the set of periodic points of the whole semigroup action. We observe that, when $G_1^*=\{f\}$, this definition coincides with the usual one of periodic points for the dynamical system $f$.

\subsubsection*{\emph{\textbf{Ruelle-expanding maps}}}
Let $(X,d)$ be a compact metric space and $T:X \rightarrow X$ a continuous map. $T$ is said to be \emph{expansive} if there exist $\varepsilon >0$ such that, for any $x,y \in X$,
$$\sup_{n\in \mathbb{N}}\,\,d(T^n(x),T^n(y))<\varepsilon  \quad \Rightarrow  \quad x=y.$$
The map $T$ is \emph{locally expanding} if there exist $\lambda>1$ and $\delta>0$ such that, for any $x,y \in X$,
$$d(x,y)<\delta \quad \Rightarrow \quad d(T(x),T(y))> \lambda \,d(x,y).$$
These two notions are bonded: every locally expanding system is expansive and an expansive map is locally expanding with respect to an adapted metric (cf. \cite{CR}).

\begin{definition}\label{de.Ruelle-expanding}
The system $(X,T)$ is \emph{Ruelle-expanding} if $T$ is locally expanding and open, that is:
\begin{enumerate}
\item There exists $c>0$ such that, for all $x, y \in X$ with $x\neq y$, we have
$$T(x)=T(y) \quad \Rightarrow \quad d(x,y)>c.$$
\item There are $r>0$ and $0<\rho <1$ such that, for each $x \in X$ and all $a \in T^{-1}(\{x\})$ there is a map $\varphi: B_r(x) \rightarrow X$, defined on the open ball centered at $x$ with radius $r$, such that $\varphi(x)=a$, $T\circ \varphi (z)=z$ and, for all $z,w \in B_r(x)$, we have
$$d(\varphi(z),\varphi(w))\leq \rho \,d(z,w).$$
\end{enumerate}
\end{definition}

In Section~\ref{sec:Ruelle-expanding}, we will recall the most relevant topological and ergodic properties of Ruelle-expanding maps. (The reader acquainted with these results may omit this section.)

\section{Statement of the main results}\label{sec:statements}

We start with a topological description of finitely generated semigroups of uniformly expanding maps. Later, we will begin assuming that $G_1$ is either a finite subset of Ruelle-expanding maps acting on a compact connected metric space $M$ or a finite subset of the space $End^2(M)$ of non-singular $C^2$ endomorphisms in a compact connected Riemannian manifold $M$. In this setting, we will show that the set of periodic points with period $n$ for such a semigroup dynamics has a definite exponential growth rate with the period $n$, which is given by the topological entropy of the semigroup (see Definition~\ref{de.top_entropy}) and this is equal to the logarithm of the spectral radius of a suitable choice of Ruelle-Perron-Fr\"obenius transfer operators $(\mathbf{L}_{n,0})_{n\ge 1}$ for the semigroup action (we refer the reader to Subsections~\ref{sec:RPFoperators} and \ref{sec:topol-entropy} for the precise definitions of these concepts).

\begin{maintheorem}\label{thm:A}
Let $G$ be the semigroup generated by a set $G_1=\{Id,g_1,\dots,g_p\}$, where $G_1^*$ is a set of Ruelle-expanding maps on a compact connected metric space $M$ and let $S:G\times M \to M$ be its continuous semigroup action. Then
$$
0<h_{\text{top}}(S) = \lim_{n\to\infty} \frac{1}{n}\log \Big(  \frac{1}{p^n} \sum_{|\underline g|=n} \sharp\, \text{Fix}(\underline g) \Big) = sp\,\left((\mathbf{L}_{n,0})_{n\in \mathbb{N}}\right).
$$
\end{maintheorem}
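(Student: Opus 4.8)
The plan is to identify all three quantities with $\log D$, where $D:=\tfrac1p\sum_{i=1}^{p}\deg(g_i)$ and $\deg(g_i)$ is the number of preimages of a point under the Ruelle-expanding generator $g_i$. Two structural facts, recalled in Section~\ref{sec:Ruelle-expanding}, are used throughout. First, since $M$ is connected and each $g_i$ is open and locally expanding, the map $x\mapsto \sharp\,g_i^{-1}(x)$ is locally constant, hence equal to a constant $\deg(g_i)\ge 2$, the bound $\ge 2$ reflecting that a nontrivial Ruelle-expanding map is never injective. Second, preimages compose, so for every word $\underline g=g_{i_n}\cdots g_{i_1}$ one has $\sharp\,\underline g^{-1}(x)=\prod_{j}\deg(g_{i_j})=:\deg(\underline g)$ independently of $x$, and therefore $\tfrac1{p^n}\sum_{|\underline g|=n}\deg(\underline g)=D^n$.

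The transfer-operator side is then immediate. With $\mathbf{L}_{n,0}\psi(x)=\tfrac1{p^n}\sum_{|\underline g|=n}\sum_{\underline g(y)=x}\psi(y)$, the previous identity gives $\mathbf{L}_{n,0}\mathbf 1=D^{\,n}\,\mathbf 1$, so $\mathbf 1$ is a positive eigenfunction with eigenvalue $D^{\,n}$; the Ruelle--Perron--Fr\"obenius theory of Subsection~\ref{sec:RPFoperators} identifies this eigenvalue, for the zero potential, as the spectral radius of $\mathbf{L}_{n,0}$, whence $sp\big((\mathbf{L}_{n,0})_{n}\big)=\log D$.

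It remains to sandwich both $h_{\text{top}}(S)$ and the periodic-orbit average between $\log D$ from the two sides. For the upper estimates I would use uniform expansiveness: as $G_1^{*}$ is finite with a common expansivity constant, two distinct points fixed by a word $\underline g$ of length $n$ must separate by a definite amount $\varepsilon$ along the segment $x,\,g_{i_1}(x),\,g_{i_2}g_{i_1}(x),\dots$, so $\text{Fix}(\underline g)$ is $(\underline g,n,\varepsilon)$-separated and $\sharp\,\text{Fix}(\underline g)\le s(\underline g,n,\varepsilon)$, where $s(\underline g,n,\varepsilon)$ is the maximal cardinality of a separated set for $\underline g$; averaging over $|\underline g|=n$ and invoking the definition of $h_{\text{top}}(S)$ from \cite{RoVa1} gives $\limsup_n\tfrac1n\log\big(\tfrac1{p^n}\sum_{|\underline g|=n}\sharp\,\text{Fix}(\underline g)\big)\le h_{\text{top}}(S)$. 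A complementary preimage count, bounding any $(\underline g,n,\varepsilon)$-separated set by $\deg(\underline g)$ up to a factor subexponential in $n$, yields $h_{\text{top}}(S)\le\log D$.

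The matching lower bound is the main obstacle and rests on a closing argument made uniform over all $p^{n}$ words. For $|\underline g|=n$, Ruelle-expanding condition~(2) provides $\deg(\underline g)$ inverse branches of $\underline g$, each a composition of $n$ local inverses and hence a $\rho^{\,n}$-contraction on a ball of radius $r$; I would show that a definite proportion of these branches map a suitable ball into itself, so that the Banach fixed point theorem produces, for each such branch, a distinct point of $\text{Fix}(\underline g)$. This gives $\sharp\,\text{Fix}(\underline g)\ge c\,\deg(\underline g)$ with $c>0$ independent of $\underline g$, and hence $\liminf_n\tfrac1n\log\big(\tfrac1{p^n}\sum_{|\underline g|=n}\sharp\,\text{Fix}(\underline g)\big)\ge\log D$. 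The delicate points are ensuring the self-mapping property and the distinctness of the produced fixed points uniformly in $\underline g$, which is where the shadowing and local-injectivity properties of Ruelle-expanding maps are decisive. Combining the displayed inequalities gives $\log D\le\liminf\le\limsup\le h_{\text{top}}(S)\le\log D$, so all of them collapse to $\log D$, which equals $sp\big((\mathbf{L}_{n,0})_n\big)$ and is positive because $D\ge 2$. I expect essentially all the work to concentrate in the uniform closing lemma; the operator identity and the separation bound are comparatively routine consequences of the structure recalled earlier.
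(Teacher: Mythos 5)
Your architecture is sound but genuinely different from the paper's. The paper proves $h_{\text{top}}(S)=\wp(S)$ by combining $h_{\text{top}}(S)\le \wp(S)$, imported from \cite[Theorem~28]{RoVa1} (where it rests on the periodic orbital specification property), with the separated-set inequality; it then uses specification again, via a Fekete-type supermultiplicativity, to turn the $\limsup$ into a limit, and only at the end identifies the common value with the spectral radius through $\mathbf{L}_{n,0}\mathbf 1 = D^n\mathbf 1$, where $D=\frac1p\sum_{i=1}^p\deg(g_i)$. You instead sandwich all three quantities directly against $\log D$. Your operator identity, the observation that $\text{Fix}(\underline g)$ is $(\underline g,n,\varepsilon)$-separated (which needs only that Bowen balls are images of contractive branches, so two $\varepsilon$-shadowing fixed points satisfy $d(P,Q)\le\rho^n d(P,Q)$), and the covering-number bound $s(\underline g,n,\varepsilon)\le C(\varepsilon)\,\deg(\underline g)$ are all correct and, as you say, routine.

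The genuine gap is the lower bound $\sharp\,\text{Fix}(\underline g)\ge c\,\deg(\underline g)$ uniformly over all $p^n$ words. The assertion that ``a definite proportion of these branches map a suitable ball into itself'' is precisely the hard content and does not follow from the contractivity of individual inverse branches: a branch of $\underline g^{-1}$ at $x$ maps $B_r(x)$ into a small ball around the chosen preimage $a$, but nothing stated so far forces a positive proportion of the $\deg(\underline g)$ preimages of $x$ to lie near $x$, with a proportion independent of $n$ and of the word. That is a mixing/covering statement for arbitrary compositions of \emph{different} generators, not for a single map, and it is exactly what the periodic orbital specification property (Definition~\ref{def:orbital-sepc}, Theorem~\ref{thm:RoVaI}) supplies in the paper. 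To close your route independently you would need a uniform covering lemma: there exist $N$ and $s>0$ such that for every word $\underline c$ with $|\underline c|=N$ and all $x,y\in M$ there is $x'\in B_s(x)$ with $\underline c(x')=y$. This can be obtained from chain-connectedness of the compact connected space $M$ (pull an $r/2$-chain joining $\underline c(x)$ to $y$ back through successive inverse branches of $\underline c$, each link displacing the preimage by at most $\rho^N r/2$); writing $\underline g=\underline a\,\underline b$ with $|\underline b|=N$ then places at least $\deg(\underline g)/(\max_i\deg(g_i))^N$ preimages of $x$ inside $B_s(x)$, your Banach fixed point argument applies, and distinctness of the resulting fixed points follows from the uniform separation of distinct inverse branches recorded in the ``Pre-images'' paragraph of Section~\ref{sec:Ruelle-expanding}. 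Without this lemma, or an explicit appeal to \cite{RoVa1}, the proposal does not yet establish the key inequality $\liminf_n\frac1n\log N_n(G)\ge\log D$.
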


The second part of this work concerns the asymptotic growth of the periodic points of a semigroup action. Assume that the periodic points for a continuous mapping $f:M \to M$ are isolated. Then, following \cite{AM}, the Artin-Mazur zeta function $\zeta_f$ of $f$ is defined as
$$\zeta_f(z) = exp\left(\sum_{n=1}^{+\infty}\,\frac{\sharp \,\text{Per}_n(f)}{n}\,z^n\right)$$
with $z$ a complex variable and $\sharp\,\text{Per}_n(f)$ standing for the the number of periodic points of $f$ with period $n$. In a similar way, given a semigroup $G$ generated by a set $G_1=\{g_1,\dots,g_p\}$ of maps on a Riemannian manifold $M$
and the corresponding semigroup action $S : G \times M \to M$, we define the zeta function of $G$ by
$$z \in \mathbb C \quad \mapsto \quad \zeta_S(z) = \exp \left(\sum_{n=1}^\infty \,\frac{N_n(G)}{n}\, z^n \right)$$
where
$$N_n(G)=\frac{1}{p^n}\sum_{|\underline g|=n}\, \sharp \, \text{Fix}(\underline g).$$
The natural queries about this function are motivated by Subsection~\ref{zeta function}. There, we will recall that, under suitable conditions, the sequence $(N_n(G))_{n \in \mathbb{N}}$ has a definite rate of growth
$$\wp(S)=\limsup_{n\to + \infty} \,\frac{1}{n}\log \, (\max\{N_n(G),1\})$$
and that the function $\zeta_S$ is rational, and so has a meromorphic extension to the complex plane. In general, what kind of function is $\zeta_S$? How is its radius of convergence related with the topological entropy of the semigroup action?

\begin{maintheorem}\label{thm:B}
Let $G$ be the semigroup generated by a set $G_1=\{Id,g_1,\dots,g_p\}$, where $G_1^*$ is a set of Ruelle-expanding maps on a compact connected
metric space $M$ and $S: G\times M \to M$ the corresponding continuous semigroup action on $M$. Then the dynamical zeta function $\zeta_S$ is rational and its radius of convergence is $\rho_S=e^{-\wp(S)}=e^{-h_{\text{top}}(S)}.$
\end{maintheorem}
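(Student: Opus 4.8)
The plan is to reduce the counting of fixed points to symbolic dynamics and then read off both the rationality and the radius of convergence from a single determinant. Since each element of $G_1^*$ is Ruelle-expanding, the topological properties recalled in Section~\ref{sec:Ruelle-expanding} provide, for every generator $g_i$, a finite Markov partition together with an associated transition (zero--one) matrix $A_i$, where $(A_i)_{ab}=1$ precisely when $g_i$ maps the $a$-th partition element over the $b$-th one. Working with the skew-product $F$ over the full shift $\Sigma_p$ whose fibre maps are the $g_i$, a period-$n$ orbit of $F$ is a pair formed by a length-$n$ word $\underline i=i_1\cdots i_n$ and a fixed point in the fibre of the return map $g_{i_n}\cdots g_{i_1}$; an admissible fibre itinerary for such a word is exactly a cyclic sequence $a_0,\dots,a_{n-1}$ with $(A_{i_1})_{a_0a_1}=\cdots=(A_{i_n})_{a_{n-1}a_0}=1$. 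Counting admissible cycles then gives $\sharp\,\text{Fix}(g_{i_n}\cdots g_{i_1})=\text{tr}(A_{i_n}\cdots A_{i_1})$.

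The key simplification comes from the freeness of the semigroup: every symbol may follow every other, so summing over all words decouples. Using the linearity of the trace,
\[
\sum_{|\underline g|=n}\sharp\,\text{Fix}(\underline g)=\sum_{i_1,\dots,i_n}\text{tr}(A_{i_n}\cdots A_{i_1})=\text{tr}\Big(\big(\sum_{i=1}^{p} A_i\big)^n\Big)=\text{tr}(B^n),
\]
where $B:=A_1+\cdots+A_p$ is a nonnegative integer matrix. Hence $N_n(G)=\text{tr}\big((B/p)^n\big)$, and the classical identity $\exp\big(\sum_{n\ge 1}\text{tr}(C^n)z^n/n\big)=\det(I-zC)^{-1}$ applied with $C=B/p$ yields
\[
\zeta_S(z)=\frac{1}{\det\!\big(I-\tfrac{z}{p}\,B\big)},
\]
which is a rational function whose poles are the reciprocals of the eigenvalues of $B/p$.

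It then remains to locate the radius of convergence. Because $B/p$ is nonnegative, Perron--Fr\"obenius guarantees that its spectral radius $\operatorname{sp}(B/p)$ is itself an eigenvalue, so $z=\operatorname{sp}(B/p)^{-1}$ is a genuine pole of $\zeta_S$ and thus $\rho_S=\operatorname{sp}(B/p)^{-1}$. On the other hand, the exponential growth rate of $\text{tr}\big((B/p)^n\big)$ equals $\log\operatorname{sp}(B/p)$, so by Theorem~\ref{thm:A} we obtain $\log\operatorname{sp}(B/p)=\lim_n\tfrac1n\log N_n(G)=h_{\text{top}}(S)$; moreover, since this limit exists and is strictly positive, $\wp(S)=\limsup_n\tfrac1n\log\max\{N_n(G),1\}=h_{\text{top}}(S)$. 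Combining these identities gives $\rho_S=e^{-h_{\text{top}}(S)}=e^{-\wp(S)}$, as claimed.

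The main obstacle is the passage $\sharp\,\text{Fix}(g_{i_n}\cdots g_{i_1})=\text{tr}(A_{i_n}\cdots A_{i_1})$: the Markov coding is only a finite-to-one semiconjugacy, and points lying on the boundaries of the partition may carry several itineraries, so a priori the trace over-counts periodic points. I expect to handle this as in the single-map case of Artin--Mazur--Manning, namely by an inclusion--exclusion over the finitely many lower-dimensional boundary subshifts; each of these again possesses a rational zeta function, so the correction only multiplies $\zeta_S$ by finitely many rational factors and affects neither the rationality nor the location of the leading pole. A secondary point to verify is the compatibility of the partitions, that is, that a common Markov refinement can be chosen so that compositions of generators correspond to products of the matrices $A_i$; this is precisely where the Ruelle-expanding hypothesis and the construction of the fibred Markov structure for the skew-product $F$ become essential.
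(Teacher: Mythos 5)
Your computation of the radius of convergence is fine: Cauchy--Hadamard gives $\rho_S^{-1}=\limsup_n\sqrt[n]{N_n(G)}=e^{\wp(S)}$, and Theorem~\ref{thm:A} supplies $\wp(S)=h_{\text{top}}(S)$; this part does not actually need the matrix $B$ at all. The problem is the rationality argument. It rests on the existence of a single finite partition of $M$ that is simultaneously Markov for \emph{all} the generators, so that each $g_i$ acquires a transition matrix $A_i$ over the same index set and compositions correspond to matrix products. You flag this as ``a secondary point to verify'', but it is false in general. For a common Markov partition of the circle into arcs, the finite set $E$ of endpoints must satisfy $g_i(E)\subseteq E$ for every $i$, i.e.\ $E$ is a nonempty finite set invariant under the whole semigroup. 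Take $g_1(x)=2x$ and $g_2(x)=3x+\alpha$ on $\mathbb R/\mathbb Z$ with $\alpha$ irrational: both are $C^\infty$ expanding, hence Ruelle-expanding, and the semigroup orbit of every point $x$ is infinite (at least one of $3x+\alpha$, $6x+\alpha$ is irrational, since their difference is $-3x$, and the $g_1$-orbit $\{2^k y\}$ of an irrational $y$ is infinite mod $1$). So no finite invariant set, hence no common Markov partition, exists; the matrices $A_i$, the matrix $B=\sum_i A_i$ and the identity $\zeta_S(z)=\det(I-\tfrac{z}{p}B)^{-1}$ are simply not available, and the Manning-type inclusion--exclusion you plan for the boundary over-count never gets off the ground.

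The repair is to run the symbolic argument one level up, which is what the paper does: show that the skew-product $\mathcal F_G$ on $\Sigma_p^+\times M$ is itself a topologically mixing Ruelle-expanding map (Lemma~\ref{le.skew expanding mixing}), so that the rationality of its Artin--Mazur zeta function follows from the single-map theory recalled in Section~\ref{sec:Ruelle-expanding}, with the Markov structure built once, for $\mathcal F_G$ alone --- no compatibility between the individual generators is required. The counting identity $\sharp\,\text{Per}_n(\mathcal F_G)=\sum_{|\underline g|=n}\sharp\,\text{Fix}(\underline g)=p^n\,N_n(G)$ (Lemma~\ref{le.periodicpoints_skew_semigroup}), which you essentially state when you describe a period-$n$ point of the skew product as a pair consisting of a length-$n$ word and a fixed point of the return map, then gives $\zeta_S(z)=\zeta_{\mathcal F_G}(z/p)$ and the rationality follows. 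Your determinant formula survives in spirit, with $B$ replaced by a transition matrix for a Markov partition of $\mathcal F_G$, but not with fibrewise matrices attached to the individual generators.
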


Afterwards, we study the ergodic properties of semigroups of maps in $End^2(M)$. Although one does not expect to find an absolutely continuous common invariant probability measure, we may hope to discover some probability measure which reflects an averaged distribution of the Lebesgue measure under the action of the semigroup. We say that $R_{\underline \theta}$ is a \emph{random walk on $G$} if $R_{\underline \theta}=\iota_* \theta^{\mathbb N}$, where $\theta$ is a probability measure on $\iota^{-1}(G_1^*)=\{1, \dots, p\}$, the set of generators of $F_p$. For instance, if $\theta (i)=\frac{1}{p}$ for any $i \in \{1, \dots, p\}$, then $\eta_{\underline p}=\theta^{\mathbb N}$ is the equally distributed Bernoulli probability measure on the Borel sets of the unilateral shift $\Sigma^+_p=\{1, \dots, p\}^{\mathbb N}$. If, instead, $\theta (i)=a_i>0$ for each $i \in \{1, \dots, p\}$, then $\underline a = (a_1,a_2,\ldots,a_p)$ is a non-trivial probability vector and $\eta_{\underline a}=\theta^{\mathbb N}$ will stand for the Bernoulli probability measure $\theta^{\mathbb N}$ on $\Sigma^+_p$, while $R_{\underline a}=\iota_* (\eta_{\underline a})$ will denote the corresponding random walk $R_{\underline \theta}$ on $G$. More generally, we may take a $\sigma$-invariant probability measure $\eta$ on $\Sigma^+_p$ and consider the associated random walk $R_{\eta}=\iota_* (\eta)$. Given a $\sigma$-invariant probability measure $\eta$, a probability measure $\nu$ on $M$ is said to be $R_{\eta}$-\emph{stationary} if
$$\nu  = \int \underline g_* \nu \; dR_{\eta}(\underline g)$$
that is, for every continuous observable $\psi : M \to \mathbb R$, we have
\begin{equation}\label{de.stationary}
\int \psi \; d\nu = \int \Big( \int \psi \circ \underline g \, d \nu \Big) \; dR_{\eta}(\underline g).
\end{equation}
We will show that every semigroup action of $C^{2}$ expanding maps admits a unique stationary measure which is absolutely continuous with respect to the Lebesgue measure and whose density can be obtained via the iterations of a Ruelle-Perron-Fr\"obenius operator. Finally, we discuss a thermodynamic formalism 
and find 
an adequate definition of measure of maximal entropy for a semigroup action with respect to a fixed random walk.

\begin{maintheorem}\label{thm:C}
Let $G$ be the semigroup generated by a set $G_1=\{id, g_1,\dots,g_p\}$ of $C^2$ expanding maps on a compact connected Riemannian manifold $M$. Consider the corresponding continuous semigroup action $S: G\times M \to M$ and the random walk $R_{\underline p}=\iota_*(\eta_{\underline p})$. Then, the semigroup action has:
\begin{itemize}
\item an absolutely continuous stationary probability measure $\nu_1$;
\item a probability measure of maximal entropy $\nu_0$.
\end{itemize}
Moreover, the maximal entropy measure can be computed as the weak$^{\ast}$-limit of an averaged distribution of either preimages or periodic points.
\end{maintheorem}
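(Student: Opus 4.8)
The plan is to reduce both assertions to the spectral analysis of averaged Ruelle-Perron-Fr\"obenius operators, one for each relevant potential. For the density $\nu_1$ I would work with the geometric potentials $\phi_i=-\log|\det Dg_i|$ and the averaged transfer operator
$$
\mathbf{L}_1 h=\frac1p\sum_{i=1}^p \mathcal{L}_{g_i}h,\qquad (\mathcal{L}_{g_i}h)(x)=\sum_{g_i(y)=x}\frac{h(y)}{|\det Dg_i(y)|},
$$
whereas for $\nu_0$ I would use the zero-potential operators $\mathbf{L}_{n,0}$ from Theorem~\ref{thm:A}. The bridge between the dynamical and the functional pictures is the change-of-variables identity $(g_i)_*(h\,m)=(\mathcal{L}_{g_i}h)\,m$, valid for the Lebesgue measure $m$ because each $g_i$ is a $C^2$ local diffeomorphism; it shows that a density $h$ with $\int h\,dm=1$ produces an $R_{\underline p}$-stationary measure $h\,m$ exactly when $\mathbf{L}_1h=h$. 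Thus the whole theorem becomes a statement about fixed points and leading eigendata of $\mathbf{L}_1$ and $\mathbf{L}_{1,0}$.

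For the stationary measure I would run the Birkhoff--Hilbert cone argument on the averaged operator. By uniform expansion and the $C^2$ regularity of the potential, each $\mathcal{L}_{g_i}$ maps the cone $\mathcal{C}_B=\{h>0:\ h(x)/h(y)\le e^{B\,d(x,y)^\alpha}\}$ into a subcone $\mathcal{C}_b$, $b<B$, of finite Hilbert diameter $\Delta$ (the standard Ruelle distortion estimate for a single uniformly expanding map). The only new point is that the convex combination inherits this: since every inequality defining $\mathcal{C}_b$ survives addition, $\mathcal{C}_b$ is convex, and $\mathbf{L}_1 h$ is an average of the $\mathcal{L}_{g_i}h\in\mathcal{C}_b$, whence $\mathbf{L}_1 h\in\mathcal{C}_b$ as well; therefore $\mathbf{L}_1$ contracts the Hilbert metric of $\mathcal{C}_B$ by the factor $\tanh(\Delta/4)<1$. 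This yields a unique positive fixed ray, a spectral gap, and, after normalizing $\int h_1\,dm=1$, the absolutely continuous stationary measure $\nu_1=h_1\,m$; uniqueness among a.c. stationary measures is immediate from the projective contraction, and integrating $\mathbf{L}_1 h_1=\lambda_1 h_1$ against $m$ together with $\int\mathbf{L}_1 h\,dm=\int h\,dm$ forces $\lambda_1=1$.

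The maximal entropy measure comes from the same machinery applied to $\mathbf{L}_{1,0}$. Here I would first observe that, on a compact connected manifold, every $C^2$ expanding map is a proper local homeomorphism, hence a covering map of constant degree $d_i$; consequently $\mathcal{L}_{g_i,0}\mathbf 1=d_i\mathbf 1$ and $\mathbf{L}_{1,0}\mathbf 1=\bar d\,\mathbf 1$ with $\bar d=\frac1p\sum_i d_i$. Positivity identifies $\bar d$ as the leading eigenvalue and the constant as the (normalized) leading eigenfunction, so after the cone argument $\bar d^{-1}\mathbf{L}_{1,0}$ has a spectral gap and a leading eigenmeasure $m_0$, the conformal measure, with $\mathbf{L}_{1,0}^{\,*}m_0=\bar d\,m_0$. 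I would set $\nu_0=m_0$ and check, via the fibered/relative entropy and the variational principle for the skew product, that the value $\log\bar d=h_{\text{top}}(S)$ is attained exactly at $\nu_0$; this is consistent with $sp((\mathbf{L}_{n,0})_n)$ of Theorem~\ref{thm:A}, since $\mathbf{L}_{n,0}\mathbf 1=\bar d^{\,n}\mathbf 1$.

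Finally, the two equidistribution statements follow from the spectral gap by iterating the dual operator on a Dirac mass. Writing $\mathbf{L}_{n,0}\psi(x)=p^{-n}\sum_{|\underline g|=n}\sum_{\underline g(y)=x}\psi(y)$ and using $\bar d^{-n}\mathbf{L}_{n,0}\psi\to\bigl(\int\psi\,dm_0\bigr)\mathbf 1$ uniformly (a consequence of the gap, as $\mathbf{L}_{n,0}=\mathbf{L}_{1,0}^{\,n}$), evaluation at any $x$ gives
$$
\nu_0=\lim_{n\to\infty}\frac{1}{p^{\,n}\bar d^{\,n}}\sum_{|\underline g|=n}\ \sum_{\underline g(y)=x}\delta_y\qquad(\text{weak}^*),
$$
the averaged preimage distribution. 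For periodic points I would express $\sum_{y\in\text{Fix}(\underline g)}\psi(y)$ through the local inverse branches and a trace-type formula for $\mathcal{L}_{\underline g,0}$ and show, via the same gap, that $\frac{1}{p^{\,n}N_n(G)}\sum_{|\underline g|=n}\sum_{y\in\text{Fix}(\underline g)}\delta_y$ also converges to $\nu_0$, with $N_n(G)$ the counts of Theorem~\ref{thm:B}. I expect the genuine obstacle to lie in two places: making the uniform cone contraction for the averaged operator quantitative and uniform over the generators, and the periodic-point equidistribution, where the counting of $\text{Fix}(\underline g)$ must be controlled so that the subleading spectrum is shown to be negligible after the $p^{-n}\bar d^{-n}$ normalization.
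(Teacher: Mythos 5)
Your construction of the stationary measure $\nu_1$ is essentially the paper's: both pass to the averaged transfer operator with the geometric potential $-\log|\det Dg_i|$ and extract a H\"older fixed density (the paper outsources the cone/spectral-gap step to Viana's notes on random perturbations of expanding maps, while you spell out the Birkhoff--Hilbert argument, correctly noting that the cone of log-H\"older positive functions is closed under convex combinations). For the maximal entropy measure, however, you take a genuinely different and more intrinsic route. The paper never works with $\mathbf{L}_{1,0}$ spectrally on $M$: it \emph{defines} $\nu_0$ as the marginal $(\pi_M)_*(\mu_{\underline m})$ of the maximal entropy measure of the skew-product $\mathcal{F}_G$ on $\Sigma_p^+\times M$, which it has shown to be Ruelle-expanding and topologically mixing, and then imports Baladi's annealed equilibrium theory and Boyd's preimage equidistribution. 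Your route --- identifying $\bar d=\frac1p\sum_i\deg(g_i)$ as the leading eigenvalue of $\mathbf{L}_{1,0}$ with constant eigenfunction, taking the dual conformal measure $m_0$, and using $\mathbf{L}_{n,0}=\mathbf{L}_{1,0}^{\,n}$ to get the averaged preimage equidistribution --- produces the same measure (the marginal of the skew-product conformal measure is exactly $m_0$) and has the virtue of never leaving $M$. What the skew-product route buys is a precise \emph{meaning} for ``maximal entropy measure of the semigroup action'' (via the annealed variational principle) and, crucially, the periodic-point statement for free.

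That last point is where your proposal has a genuine gap. The spectral gap of $\mathbf{L}_{1,0}$ controls sums over \emph{preimages} of a base point, not sums over \emph{fixed points}; converting $\sum_{y\in\text{Fix}(\underline g)}\delta_y$ into operator data requires matching each contractive inverse branch of $\underline g$ with the unique fixed point it carries, controlling which branches actually contain their fixed point, and showing the resulting error (the discrepancy between $\psi$ at the branch's fixed point and at the branch's image of an arbitrary base point) is uniformly negligible over all $p^n$ words after the $p^{-n}\bar d^{-n}$ normalization. You flag this as an obstacle but do not resolve it, and it is not a routine consequence of the gap as stated. The paper sidesteps it entirely: since $\mathcal{F}_G$ is a single topologically mixing Ruelle-expanding map, the classical equidistribution of its periodic orbits toward $\mu_{\underline m}$ applies, and the identity $\sharp\,\text{Per}_n(\mathcal{F}_G)=\sum_{\sigma^n\omega=\omega}\sharp\,\text{Fix}(g_{\omega_n}\cdots g_{\omega_1})=p^nN_n(G)$ turns the push-forward under $\pi_M$ into exactly the averaged periodic distribution required. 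If you want to complete your intrinsic argument, the cleanest fix is to borrow this one step from the skew-product picture rather than proving a trace formula for the averaged operator from scratch.
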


\section*{Overview}

In Section~\ref{sec:Ruelle-expanding} we recall the main properties of Ruelle-expanding maps, both as a guide
to the reader and for reference in the remaining of the text. The reader that is acquainted with the classical theory
of Ruelle expanding maps may choose to skip this section in a first reading of the text.

A sequence of Ruelle-Perron-Frobenius transfer operators suitable for the analysis of semigroup actions is defined in
Section~\ref{sec:RPFoperators}. In the case of random walks on the free semigroup, this sequence of operators coincides
with the iterates of a Ruelle-Perron-Frobenius transfer that averages the transfer operators of the dynamical systems that generate
the semigroup. Moreover, these operators are in a strong relation with the classical transfer operator for a locally constant
skew-product dynamics. To complement the classical approach of random dynamical systems, we are mainly interested in determining
intrinsic objects for the dynamics of the semigroup action. For that reason, we will, in particular, discuss the dependence of the
invariant measures for skew-product dynamics on the probability measures within the underlying shift that describes the
random walk on the free semigroup.

In Sections~\ref{sec:topol-entropy} and ~\ref{sec:zeta} we justify the notion of topological entropy for semigroup actions. Indeed,
using the relation with the skew product dynamics, we prove not only that the entropy can be computed by the growth rate of the
mean number of periodic orbits but also that it arises naturally as the radius of convergence of the zeta function (Theorems~\ref{thm:A}
and ~\ref{thm:B}).

Motivated by the strong connections found between the skew-product dynamics and the semigroup actions, in Sections~\ref{se.entropy} and \ref{sec:stationary-measure} we focus on building a bridge between the several concepts of the thermodynamic formalism for skew-products and the intrinsic objects for semigroup actions. In particular, we compare the notions of classical entropy, fibered entropy, relative
entropy, and quenched and annealed equilibrium states for symmetric and non-symmetric random walks. For instance, we prove that the entropy of the semigroup coincides with the fibered entropy of Ledrappier and Walters \cite{LW77} if and only if all maps have the same degree (Proposition~\ref{le:hfibered}); and that the later coincides with a quenched pressure in random dynamical systems (Proposition~\ref{le:hfibered-a}). The entropy of the semigroup is showed to coincide with an annealed pressure in random dynamical systems (Corollary~\ref{cor:Bufetov}) and also to be equal to the classical topological pressure
of a suitable potential for the skew-product dynamics (cf. \eqref{eq:equalpress}). Some results on stationary measures are
recalled as well while constructing absolutely continuous stationary measures in Theorem~\ref{thm:C}.

In Section~\ref{se.Therm-formalism} we select measures for the semigroup action using some variational insight. Using
both connections between the semigroup action, the classical thermodynamic formalism and the annealed pressure function, we
provide an intrinsic construction of measures (not necessarily stationary) on the ambient space which arise as
marginals from equilibrium states. The two different approaches allow us to conclude that such maximal entropy measures
can be computed using either an averaged equidistribution of periodic points or of preimages, which ends the proof of Theorem~\ref{thm:C}.

\section{Main properties of Ruelle-expanding maps}\label{sec:Ruelle-expanding}

Let $(X,d)$ be a compact metric space and $T:X \rightarrow X$ a Ruelle-expanding map.

\subsubsection*{\emph{\textbf{Expansivity}}}\label{expansive}

Any $\varepsilon$ verifying $\varepsilon < \min \, \{r, \frac{c}{1+\rho}\}$ is a constant of expansivity of $T$. This means that
$$d(T^n(x),T^n(y) \leq \varepsilon \quad \forall\,n\geq 0 \quad \Rightarrow \quad x=y.$$
This property ensures that, for any $n \in \mathbb{N}$, the periodic points with period $n$ are isolated and therefore in finite number. Yet, in this context, we also have
$$X=\bigcup_{n\geq 0}\,T^{-n}(\,\overline{\text{Per}(T)}\,)$$
where $\text{Per}(T)$ denotes the set of periodic points of $T$ (details in \cite{CM}).

\subsubsection*{\emph{\textbf{Pre-images}}}\label{preimages}

The cardinal of the pre-images by $T$ is uniformly bounded (that is, there is $p\in\NN$ such that $card(T^{-1}(\{x\}))\leq p$, for all $x\in X$) and locally constant. Moreover, if $\delta=\min\{r, \frac{c}{2\rho}\}$, and $x,y \in X$ are such that $d(x,y)<\delta$, then, for any positive integer $n$, we may write
$$T^{-n}(x)=\{x_1, x_2, \cdots, x_{k_n}\} \quad \text{ and } \quad T^{-n}(y)=\{y_1, y_2, \cdots, y_{k_n}\}$$
satisfying $d(T^j(x_i),T^j(y_i))\leq \rho^{n-j} \,d(x,y)$ for all $0\leq j \leq n$ and $1 \leq i \leq k_n.$ (See \cite{CM} for details.)

\subsubsection*{\emph{\textbf{Contractive branches}}}\label{contractivebranch}

Let $S\subseteq X$. Given $n\in\NN$, we say that $\phi:S\rightarrow X$ is a {\it contractive branch} of $T^{-n}$ if
\begin{enumerate}
	\item $(T^n\circ \phi)(x)=x \,\,\, \forall x \in S$
	\item $d((T^j\circ \phi)(x),(T^j\circ \phi)(y))\leq \rho^{n-j}\,d(x,y)\,\,\,\forall \, x,y\in S\,\,\,\forall \, j\in\{0,1,\ldots,n\}.$
\end{enumerate}
It is straightforward (cf. \cite{CM}) to conclude that, given $x \in X$, $n\in\mathbb{N}$ and $a \in T^{-n}(x)$, there is a contractive branch $\phi:B_r(x) \rightarrow X$ of $T^{-n}$ such that $\phi(x)=a$. Moreover, if $\varepsilon$ is a constant of expansivity, $\varsigma \leq \varepsilon$ and $B_{n,\varsigma,a}=\{z \in X: d(T^j(z), T^j(a))< \varsigma \quad \forall \, 0 \leq j \leq n\}$, then $B_{n,\varsigma,a}=\phi(B_{\varsigma}(x)).$

\subsubsection*{\emph{\textbf{Spectral decomposition of the dynamics}}}\label{spectraldecomposition}

In \cite{Ru2}, it was proved that there exists a (unique) finite family
$\displaystyle \big(\Lambda_i^{(m)}\big)_{i \in \{1, \cdots, n_m\}; \, \, m \in \{1, \cdots, M\}}$
of compact disjoint subsets, called basic components,
 such that
\begin{itemize}
\item[(C1)] $T(\Lambda_i^{(m)})=\Lambda_{i+1}^{(m)}$ for all $i \in \{1, \cdots, n_m-1\}$ and $m \in \{1, \cdots, M\}$.
\item[(C2)] $T(\Lambda_{n_m}^{(m)})=\Lambda_1^{(m)}$ for all $m \in \{1, \cdots, M\}$.
\item[(C3)] $\bigcup_{i,m}\,\,\Lambda_i^{(m)}=\overline{\text{Per}(T)}$.
\item[(C4)] $T^{n_m}_{|_{\Lambda_i^{(m)}}}$ is Ruelle-expanding.
\item[(C5)] For any open non-empty subset $V$ of $\Lambda_i^{(m)}$ there is $N\in \mathbb{N}$ such that $(T^{n_m})^N(V)=\Lambda_i^{(m)}$.
\end{itemize}
Notice that, if $X$ is connected, then $X=\overline{\text{Per}(T)}$ and $X$ reduces to one basic component (where, by (C5), $T$ is topologically mixing). Following \cite{L}, we say that a map $T:X \rightarrow X$ is covering if for any non-empty open set $V$ there exists an iterate $N \in \mathbb{N}$ such that $T^N(V) = X$. From (C5), every Ruelle-expanding topologically mixing map is covering.

\subsubsection*{\emph{\textbf{Dynamical zeta function and entropy}}}\label{zeta function}

The zeta function of $T$ is a formal series that encodes the information regarding the number of periodic points of $T$, given by
$$z \in \mathbb{C} \,\mapsto\, \zeta_T(z)=\exp\left(\sum_{n=1}^\infty\frac{N_n(T)}{n}\, z^n\right)$$
where $N_n(T)$ is the number of periodic points with period $n$, for all $n \in \mathbb{N}$ (cf. \cite{PP} and references therein). In our context, $\zeta_T$ is a rational function, so it has a meromorphic continuation to the whole complex plane. The poles, zeros and residues of this extension provide additional topological invariants for $T$ and an insight into its orbit structure. In particular, the topological entropy of $T$ is equal to $-\log(\varrho)$, where $\varrho$ is the radius of convergence of $\zeta_T$. Additionally, if $T$ is topologically mixing, then
$$h_{\text{top}}(T)=\lim_{n \rightarrow +\infty}\,\,\frac{1}{n}\, \log N_n(T)$$
a limit equal to zero if and only if $X$ is finite. The reader may find more details in \cite{CM}.

\subsubsection*{\emph{\textbf{Thermodynamic formalism}}}\label{RuelleTheorem}

Given a positive constant $\theta \in \,]0,1[$, consider the space of H\"older-continuous maps with exponent $\theta$
$$\mathfrak{F}_{\theta}^+=\left\{w: X \rightarrow \mathbb{R}: \exists\, C_w > 0: |w(x)-w(y)|\leq C_w\,d(x,y)^\theta \,\,\,\,\,\forall \,x,y \in X\right\}$$
with the norm
$$\|w\|_{\theta}= \|w\|_{\infty} + \sup_{x \neq y,\,\, d(x,y)<\delta}\,\, \frac{|w(x)-w(y)|}{d(x,y)^{\theta}}$$
where $\delta=\min\{r, \frac{c}{2\rho}\}$. The space of real continuous functions defined in $X$, with the uniform norm $\|\,\|_0$, will be denoted by $C^0(X)$. For a real valued function $\varphi:X \rightarrow \mathbb{R}$, the associated \emph{Ruelle operator} $\mathfrak{L}_\varphi$ acts on a map $\psi:X \rightarrow \mathbb{R}$ as
$$x \in X \,\mapsto\, \mathfrak{L}_\varphi(\psi)(x) \,=\, \sum_{y \,\in\, T^{-1}(x)}\,\,\psi(y)\,e^{\varphi(y)}.$$
This operator is well defined (see the properties of the pre-images of $T$), linear and positive. By induction, we may verify that, for any $\psi$ and $x$,
$$\mathfrak{L}^n_\varphi(\psi)(x)\,= \sum_{y \,\in\, T^{-n}(x)}\,\,\psi(y)\,e^{S_n\,\varphi(y)}$$
where $S_n\,\varphi(y)=\varphi(y)+\cdots+ \varphi(T^{n-1}y).$ If $\varphi \in C^0(X)$, then $\mathfrak{L}_\varphi(\psi) \in C^0(X)$ and the operator $\mathfrak{L}_\varphi$ restricted to $C^0(X)$ is continuous with respect to the $C^0$-norm. Similarly, if $\varphi, \, \psi \in \mathfrak{F}_{\theta}^+$, then $\mathfrak{L}_\varphi(\psi) \in \mathfrak{F}_{\theta}^+$. The most significant ergodic properties of Ruelle-expanding maps yielded by the associated Ruelle operator are described by the following theorem.

\begin{theorem}\emph{\cite{Ru2}}\label{te.Ruelle}
Assume that $T:X \rightarrow X$ is a topologically mixing Ruelle-expanding map and $\varphi \in \mathfrak{F}_{\theta}^+$. Then:
\begin{enumerate}
\item There is a simple maximal positive eigenvalue $\lambda_{\varphi}$ of
$\mathfrak{L}_\varphi: C^0(X) \rightarrow C^0(X)$
with a corresponding strictly positive eigenfunction $H \in \mathfrak{F}_{\theta}^+$.
\item $H$ is the unique positive eigenfunction of $\mathfrak{L}_\varphi$, up to scalar multiplication.
\item The remainder of the spectrum of $\mathfrak{L}_{\varphi}:\mathfrak{F}_{\theta}^+ \rightarrow \mathfrak{F}_{\theta}^+$ is contained in a disk centered at $(0,0)$ with radius $R <\lambda_{\varphi}$.
\item There is a unique probability $\nu$ on the Borel subsets of $X$ such that $\mathfrak{L}_{\varphi}^*\nu=\lambda_{\varphi}\,\nu$ and $\int Hd\nu=1$.
\item For any $\psi \in C^0(X)$, $\lim_{n\rightarrow + \infty}\,\,\|\lambda_{\varphi}^{-n}\,\,\mathfrak{L}_{\varphi}^n (\psi) - H\int \psi\,d\nu\|_0=0.$
\item For any $\psi \in \mathfrak{F}_{\theta}^+$, $\sup_{n \,\in \,\mathbb{N}}\,\|\lambda_{\varphi}^{-n}\, \mathfrak{L}^n_\varphi(\psi)\|_{\theta} < \infty.$
\item The probability $\mu=H\nu$ is $T$-invariant, exact, positive on non-empty open sets and satisfies a variational principle: it is the unique probability such that
$$\log\,(\lambda_{\varphi})=h_{\mu}(T) + \int \varphi\,d\mu= \max_{u \,\in \,\mathfrak{M}(X,T)}\,\,\displaystyle \left(h_{u}(T) + \int \varphi\,d\,u\right)$$
where $\mathfrak{M}(X,T)$ denotes the (compact, convex) space of $T$-invariant probability measures on the $\sigma$-algebra of the Borel subsets of $X$.
\end{enumerate}
\end{theorem}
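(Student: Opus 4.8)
The plan is to recover the full Ruelle--Perron--Fr\"obenius picture from the two structural facts already recorded for $T$: the geometric contraction $d((T^j\circ\phi)(x),(T^j\circ\phi)(w))\le \rho^{\,n-j}d(x,w)$ of the contractive branches of $T^{-n}$, and the covering property granted by topological mixing via (C5). The technical engine is a \emph{bounded distortion} estimate: if $\phi$ is a contractive branch of $T^{-n}$ over a ball of radius $\delta$ and $y=\phi(x)$, $w=\phi(x')$ with $d(x,x')<\delta$, then the H\"older continuity of $\varphi$ together with the geometric contraction yields
$$|S_n\varphi(y)-S_n\varphi(w)|\le C_\varphi\sum_{j=0}^{n-1}\rho^{(n-j)\theta}\,d(x,x')^\theta\le \frac{C_\varphi\,\rho^\theta}{1-\rho^\theta}\,d(x,x')^\theta,$$
a bound \emph{uniform in} $n$. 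This keeps the weights $e^{S_n\varphi}$ from oscillating as $n$ grows and underlies every subsequent step. With it in hand, the eigenmeasure of statement (4) comes first: the normalized dual action $\mu\mapsto \mathfrak{L}_\varphi^*\mu/(\mathfrak{L}_\varphi^*\mu)(1)$ is a continuous self-map of the weak$^{\ast}$-compact convex set of Borel probabilities on $X$, so the Schauder--Tychonoff theorem furnishes a fixed point $\nu$, giving $\mathfrak{L}_\varphi^*\nu=\lambda_\varphi\,\nu$ with $\lambda_\varphi=\int\mathfrak{L}_\varphi 1\,d\nu>0$.

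For the eigenfunction and the spectral gap, namely (1)--(3) and (5)--(6), I would run the invariant-cone method. Fix the cone
$$\mathcal{C}_b=\Big\{\psi\in\mathfrak{F}_\theta^+:\ \psi>0,\ \ \psi(x)\le\psi(x')\,e^{\,b\,d(x,x')^\theta}\ \text{ whenever } d(x,x')<\delta\Big\}.$$
The distortion bound shows that for $b$ large enough $\mathfrak{L}_\varphi(\mathcal{C}_b)\subseteq\mathcal{C}_b$, and in fact that $\lambda_\varphi^{-1}\mathfrak{L}_\varphi$ pushes $\mathcal{C}_b$ into a cone with strictly smaller constant. The covering property (C5) then forces some iterate $\mathfrak{L}_\varphi^{N}$ to have image of \emph{finite diameter} in the Hilbert projective metric of $\mathcal{C}_b$; by Birkhoff's theorem $\lambda_\varphi^{-1}\mathfrak{L}_\varphi$ becomes a uniform contraction of that metric. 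The fixed direction is the strictly positive H\"older eigenfunction $H$, unique up to scale (yielding (2)); the contraction rate gives the spectral gap (3); and the exponential convergence $\lambda_\varphi^{-n}\mathfrak{L}_\varphi^n\psi\to H\int\psi\,d\nu$ holds first on $\mathfrak{F}_\theta^+$ and, by density together with the uniform bound (6), on all of $C^0(X)$, which is (5).

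For the invariant measure and the variational principle (7), I would set $\mu=H\nu$ after normalizing $\int H\,d\nu=1$. Invariance is immediate from the conjugation identity $\mathfrak{L}_\varphi((\psi\circ T)\,H)=\psi\,\mathfrak{L}_\varphi H=\lambda_\varphi\,\psi\,H$ integrated against $\nu$ using $\mathfrak{L}_\varphi^*\nu=\lambda_\varphi\nu$; exactness and positivity on non-empty open sets follow from the exponential convergence in (5) and the covering property. For the equilibrium characterization, the distortion estimate upgrades to the \emph{Gibbs property} $\mu(B_{n,\varsigma,x})\asymp \exp\!\big(-n\log\lambda_\varphi+S_n\varphi(x)\big)$; feeding this into the Shannon--McMillan--Breiman theorem gives $h_\mu(T)+\int\varphi\,d\mu=\log\lambda_\varphi$, while the standard separated-set upper bound for the pressure gives $h_u(T)+\int\varphi\,du\le\log\lambda_\varphi$ for every $T$-invariant $u$.

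I expect the \emph{spectral gap} to be the genuine obstacle: proving that some iterate maps $\mathcal{C}_b$ to a set of finite Hilbert diameter is exactly where topological mixing (through the covering property) is indispensable, and it is the step that cannot be reduced to the soft fixed-point and distortion arguments used elsewhere. The uniqueness clause in (7) is the second delicate point, where strict positivity of $H$ and the exactness of $\mu$ must be combined to exclude any other maximizing invariant measure.
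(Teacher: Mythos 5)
The paper does not actually prove this statement: Theorem~\ref{te.Ruelle} is quoted from Ruelle's \emph{Thermodynamic Formalism} \cite{Ru2} purely as background for Section~\ref{sec:Ruelle-expanding}, so there is no in-paper argument to compare yours against. Measured against the cited source, your outline is correct and follows the now-standard streamlined route: the uniform bounded-distortion estimate from the contractive branches, Schauder--Tychonoff for the conformal measure, and Birkhoff cone contraction in the Hilbert projective metric (with topological mixing, via the covering property (C5), supplying the finite projective diameter of the image of some iterate) to produce $H$, the spectral gap and the convergence statements (5)--(6) in one stroke. Ruelle's original argument is organized differently: the eigenfunction is extracted by an equicontinuity/compactness argument applied to the normalized iterates $\lambda_\varphi^{-n}\mathfrak{L}_\varphi^n(\textbf{1})$, which the same distortion estimate confines to a bounded equicontinuous family, and the gap on $\mathfrak{F}_{\theta}^+$ comes from a Lasota--Yorke-type inequality plus quasi-compactness. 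Both routes are legitimate; the cone method buys exponential convergence and the gap simultaneously, at the cost of the finite-diameter lemma you rightly identify as the place where mixing is indispensable.

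Two spots in your sketch would need real work in a complete write-up. First, item (4) asserts \emph{uniqueness} of the eigenmeasure, while Schauder--Tychonoff only gives existence; you should close the loop by observing that for any other normalized fixed point $\nu'$ of the dual action, statement (5) yields $\int\psi\,d\nu'=\lim_n \lambda_\varphi^{-n}\int\mathfrak{L}_\varphi^n\psi\,d\nu'=\big(\int H\,d\nu'\big)\int\psi\,d\nu$, so $\nu'$ is proportional to $\nu$. Second, the uniqueness clause in (7) does not follow from exactness of $\mu$ together with positivity of $H$, which is the mechanism you name: the standard arguments are either that any equilibrium state must satisfy the same two-sided Gibbs inequalities (hence is equivalent to $\mu$ and, by ergodicity, equal to it) or the differentiability of the pressure function $t\mapsto P(\varphi+t\psi)$. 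You correctly flag this step as delicate, but as written it is the one genuinely unproved assertion in the proposal.
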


\begin{remark}\label{re.particular_cases}
In the particular case of $T:X \to X$ being a $C^2$ expanding map of a compact connected Riemannian manifold $X$ endowed with a Lebesgue probability measure ($\Leb$ for short), we may consider $\varphi:X \to \mathbb{R}$ given by $\varphi(x)=-\log\,|det\,DT_x|$, which belongs to $\mathfrak{F}_{\theta}^+$. Then 
there is a unique probability measure $\mu$ on the Borel sets of $X$ which is invariant under $T$ and absolutely continuous with respect to Lebesgue. Moreover, the density of $\mu$ with respect to Lebesgue is precisely given by $H$, so it is H\"older and strictly positive. Additionally, $\mu$ is exact; $h_\mu(T) = \int\, \log\,|det\,DT_x|\,d\mu$; $h_u(T) < \int\, \log\,|det\,DT_x|\,d\,u$, for every $T$-invariant probability measure $u \neq \mu$; and, for any Borel set $A$, we have $\lim_{n \to +\infty}\,\Leb(T^{-n}(A))=\mu(A)$.

If, instead, we consider $\varphi \equiv 0$ and $X$ is connected, then we may deduce relevant information concerning the asymptotic distribution of the pre-images of each point by the expanding map $T:X \to X$. Given $x \in X$ and $n \in \mathbb{N}$, let $\nu_n(x)$ be the probability measure on the Borel subsets of $X$ defined as
\begin{equation}\label{eq:convmeasures}
\nu_n(x)=\frac{1}{\deg\,(T)^n}\,\sum_{T^n(y)=x}\,\delta_y
\end{equation}
where $\delta_y$ is the Dirac measure supported on $\{y\}$ and $\deg\,(T)$ is the degree of $T$, that is, $\deg\,(T)=\sharp\,T^{-1}(\{a\})$, a number independent of $a \in X$. Then 
there exists a unique probability measure $\nu$ on the Borel sets of $X$ such that, for every $x \in X$,
$$\nu=\lim_{n \to +\infty}\,\nu_n(x)$$
in the weak$^{\ast}$ topology. Such a $\nu$ is invariant under $T$ (that is, $H \equiv 1$), exact, positive on non-empty open sets, its entropy is equal to
$h_\nu(T) = \log\,(\deg\,(T))$
(that is, $\lambda_\psi=\deg\,(T)$) and it maximizes entropy (that is, $h_\nu(T) > h_v(T)$ for every $T$-invariant probability measure $v \neq \nu$). Moreover, $\nu=\mu$ if and only if, for all $x \in X$ such that $T^n(x)=x$, we have $|\det\,D_x(T^n)|=\deg\,(T)^n.$ We refer the reader to \cite{Bo75} for more information.
\end{remark}

\subsubsection*{\emph{\textbf{Gibbs measures}}}\label{sec.Gibbs}

Each equilibrium state $\mu$ obtained in Ruelle's Theorem has a positive Jacobian, namely $J_\mu f= e^{-\varphi}$, so it is positive on open non-empty sets. Besides, it is a Gibbs measure, that is, given a finite partition $\mathcal{P}=\{P_1, \cdots, P_{\ell}\}$ of $X$ with diameter less than $r$, there exists a constant $C>1$ such that, for each positive integer $m \geq 0$, any $1\leq i \leq \ell$, every contractive branch $\phi:P_i\rightarrow X$ of $T^{m}$ and all $x \in \phi(P_i)$, we have
$$ C^{-1} \leq \frac{\mu(\phi(P_i))}{e^{-m \, \log(\lambda_{\varphi})+S_m\,\varphi(x)}}\leq C.$$

\subsubsection*{\emph{\textbf{Examples}}}\label{ex.unilateral-shift} One-sided Markov subshifts of finite type, determined by aperiodic square matrices with entries in $\{0,1\}$, are Ruelle-expanding map (with $r=1$ and $c=\rho=1/2$) and topologically mixing \cite{Wa}. 
If $M$ is a compact Riemannian manifold without boundary and $T:M\rightarrow M$ a $\mathcal{C}^1$ map, the dynamical system $T$ is said to be $C^1$-expanding if
$$\exists\,\,\lambda > 1:\,\,\forall\, x\in M\,\,\forall\, v \in T_x M\,\,\left\|D_x T(v)\right\|\geq \lambda \left\|v\right\|.$$
It is easy to prove that, in the $C^1$ context, $T$ is $C^1$-expanding if and only if it is Ruelle-expanding.
For instance, $T:\mathbb{S}^1 \rightarrow \mathbb{S}^1$, $T(z)=z^m$, is $C^1$-expanding for all positive integer $m>1$ (in this case $\lambda=m$). Moreover, if $T:\mathbb{S}^1\rightarrow \mathbb{S}^1$ is a $C^2$ map with degree bigger than one such that $DT(z)\neq 0$ for all $z \in \mathbb{S}^1$ and all the periodic points of $T$ are hyperbolic (a generic property), then the restriction of $T$ to the complement of the union of the basins of the sinks is Ruelle-expanding. More generally, if $L:\mathbb{R}^n \rightarrow \mathbb{R}^n$ is a linear map whose eigenvalues have absolute value bigger than one and such that $L(\mathbb{Z}^n)\subseteq \mathbb{Z}^n$, then $L$ induces a Ruelle-expanding map on the flat torus $\mathbb{R}^n / \mathbb{Z}^n$. Conversely, any $C^1$-expanding map in the $n$-dimensional flat torus is topologically conjugate to one obtained by this process; see \cite{Shu3}.

\section{Ruelle-Perron-Fr\"obenius operators}\label{sec:RPFoperators}

In this section we shall present a proposal for the notion of Ruelle-Perron-Fr\"obenius transfer operator to be assigned to a semigroup action which is a natural extension of the concept of transfer operator for an individual dynamical system (cf. Remark~\ref{rmk:equality}). The operator we will introduce depends on the chosen set $G_1$ of generators of $G$ and on the selected random walk $R_{\eta}$ on $G$; we will come back to this subject on Subsection~\ref{symmetry}.

Let $G$ be a semigroup generated by a finite subset $G_1$ of Ruelle-expanding maps acting on a compact connected metric space $M$. Consider the corresponding continuous semigroup action $S: G \times M \to M$. Given a continuous observable $\varphi: M \to \mathbb R$, let $\mathfrak{L}_{\underline g, \varphi} : C^0(M) \to C^0(M)$ denote the usual Ruelle-Perron-Fr\"obenius operator 
associated to the dynamical system $\underline{g}$ and the observable $\varphi$:
\begin{equation}
\mathfrak{L}_{\underline g, \varphi} \,(\psi) (x)
	= \sum_{\underline g(y)=x} e^{\varphi(y)} \, \psi(y).
\end{equation}
Therefore, for each $k \in \mathbb{N}$,
$$\mathfrak{L}^k_{\underline g, \varphi}\,(\psi) (x)
	= \sum_{{\underline g}^k(y)=x} e^{S_{k}\,\varphi(y)} \, \psi(y),
$$
where $S_{k} \,\varphi(x)=\sum_{\ell=0}^{k-1} \varphi(\underline{g}^\ell(y))$
and $\underline{g}^\ell := (g_{i_n} \dots g_{i_1})^\ell$ for every $0\le \ell \le k-1.$

Alternatively, for any $\underline g=g_{i_n}...g_{i_1} \in G_n$, the operator $\mathcal{L}_{\underline g, \varphi} : C^0(M) \to C^0(M)$ is defined as
\begin{equation}\label{RPF_sumsg}
\mathcal{L}_{\underline g, \varphi}\,(\psi) (x)
	= \mathfrak{L}_{\underline g, \varphi_{\underline g}}\,(\psi)  (x)
	= \sum_{\underline g(y)=x} e^{\varphi_{\underline g}(y)} \, \psi(y),
\end{equation}
where $\varphi_{\underline g}(y)=\sum_{m=0}^{n-1} \varphi(\underline{g}_m(y))$, $g_0=id$ and $\underline{g}_m = g_{i_m} \dots g_{i_1}$ for every $1 \le m \le n-1$. Observe that, if $\varphi$ is continuous (respectively, H\"older) and the elements of $G_1$ are $C^2$  expanding maps, then $\varphi_{\underline g}$ is continuous (respectively, H\"older) as well. Moreover, for each $k \in \mathbb{N}$,
\begin{equation*}\label{RPF_n1}
\mathcal{L}^k_{\underline g, \varphi}\,(\psi) (x)
	= \sum_{{\underline g}^k(y)=x} e^{S_{k}\,\varphi_{\underline g}(y)} \, \psi(y)
\end{equation*}
where, as usual, for every $\phi:M \to \mathbb{R}$, we write $S_{k} \,\phi(x)=\sum_{\ell=0}^{k-1} \phi(\underline{g}^\ell(y))$ and $\underline{g}^\ell = (g_{i_n} \dots g_{i_1})^\ell$ for every $0\le \ell \le k-1$. Roughly speaking, the transfer operator $\mathcal{L}_{\underline g, \varphi}$ gathers the information of the pathwise transfer operators while we evaluate the observable along the $n$th iteration of the semigroup dynamics, as specified by \eqref{RPF_sumsg}.
It is not hard to check that
$
\mathcal{L}_{\underline g,\varphi}
	= \mathfrak{L}_{g_{i_n},\varphi} \circ \mathfrak{L}_{g_{i_{n-1}}, \varphi} \dots \circ \mathfrak{L}_{g_{i_1},\varphi}$
for any $\underline g=g_{i_n}...g_{i_1} \in G_n$ and every positive integer $n$.

\begin{remark} The operator $\mathcal{L}_{\underline g, \varphi}$ depends on the word that expresses $\underline{g}$ as a combination of elements of the generator $G_1$ because it is based on the function
$\varphi_{\underline g}(y)=\sum_{m=0}^{n-1} \varphi(\underline{g}_m(y))$ which may change if the order of the concatenation is altered. Thus, in this definition, we are distinguishing different concatenations even if they correspond to the same endomorphism of $M$.
\end{remark}

\subsubsection*{\emph{\textbf{A sequence of transfer operators}}}\label{RPF3}
One may also consider the non-stationary dynamical system whose complexity is indexed by the `time $n$', corresponding to the `ball of radius $n$' in the semigroup. Such viewpoint has turned to be very fruitful in the description of the topological entropy and the complexity of group and semigroup actions \cite{RoVa1}, and motivates the definition of the following weighted mean sequence of transfer operators.

\begin{definition}
\label{def:PFsequence}
Given a continuous potential $\varphi: M \to \mathbb R$ and a continuous finitely generated semigroup action $S:G \times M \to M$, the \emph{Ruelle-Perron-Fr\"obenius sequence} of $G$ with respect to $\varphi$ is the sequence $(\mathbf{L}_{n,\varphi})_{n\ge 1}$ of bounded linear operators acting on $C^0(M)$ and given, for every $n\ge 1$, by
\begin{equation*}
\mathbf{L}_{n,\varphi}
	=\frac{1}{p^n}\sum_{|\underline{g}|=n} \mathcal{L}_{\underline g,\varphi} = \frac{1}{p^n}\sum_{|\underline g|=n} \mathfrak{L}_{\underline g,\,\varphi_{\underline g}}.
\end{equation*}
\end{definition}
We observe that, for each potential $\varphi$, this sequence is obtained by averaging the usual Ruelle-Perron-Fr\"obenius transfer operators associated to $\varphi_{\underline g}$ of each dynamics $\underline g$ in $G_n^*$.
Furthermore, notice that the operator $\mathcal{L}_{\underline g, \varphi}$ depends on the order of the concatenation of the generators that build $\underline{g}$. So, we are averaging not on the maps in $G_n$ but on the different words that express them in terms of the generators.

\begin{example}\label{rmk:equality}
If $G$ is generated by $G_1=\{id,  f \}$, then $G_n^* = \{f^n\}$ for $n \geq 1$
and
$
\mathbf{L}_{n,\varphi}
	=  \mathcal{L}_{f^n,\varphi}
	=  \mathfrak{L}_{f^n, \varphi_{f^n}}
	= \mathfrak{L}^n_{f,\varphi}.
$
In particular, $\mathbf{L}_{1,\varphi}= \mathcal{L}_{f,\varphi}= \mathfrak{L}_{f,\varphi}$ and these three maps coincide with the usual Ruelle-Perron-Fr\"obenius operator for $f$. Therefore, they are all natural extensions of the notion of transfer operator for an individual dynamical system. Moreover, if $\varphi\equiv 0$,
$\mathcal{L}_{\underline g, 0}\, (\psi) (x) =\sum_{\underline g(y)=x}\, \psi(y)$ and
corresponds to the Ruelle-Perron-Fr\"obenius transfer operator $\mathfrak{L}_{\underline g, 0}$ associated to the topological entropy of $\underline g:M \to M$; see Subsection~\ref{RuelleTheorem}. For the same observable $\varphi\equiv 0$, the sequence of transfer operators $\mathbf{L}_{n,0}$ is given by
$$\mathbf{L}_{n,0}\,(\psi)(x)
	=\frac{1}{p^n}\sum_{|\underline{g}|=n}\,\sum_{\underline{g}(y)=x} \,\psi(y)$$
for every $\psi \in C^0(M)$ and $x \in M$.
\end{example}

\subsection*{Averaged and fibered transfer operators for the semigroup action}\label{sec:averagedRPF2}

Given a finitely generated continuous semigroup action $G \times M \to M$,  a shift-invariant probability measure $\eta$ on $\Sigma_p^+$ and a continuous potential $\varphi: M \to \mathbb{R}$, take the non-stationary sequence $(\mathbf{L}_{n,\eta,\varphi})_{n\ge 1}$ given by
\begin{align*}\label{Lnonstat}
n\ge1 \mapsto \mathbf{L}_{n,\eta, \varphi}
	= \int_{\Sigma_p^+}\, \mathcal{L}_{g_{i_n}, \varphi} \dots
	\mathcal{L}_{g_{i_2}, \varphi} \mathcal{L}_{g_{i_1}, \varphi} \,
	(\textbf{1})\,d \eta([i_1, \dots, i_n]).
\end{align*}
In the case $\eta=\eta_{\underline p}$, which describes the symmetric random walk, the family $(\mathbf{L}_{n,\eta_{\underline p},\varphi})_{n\ge 1}$ coincides with the family of transfer operators introduced in Definition~\ref{def:PFsequence}.
In the case of a random walk $\eta=\eta_{\underline a}$ associated to a non-trivial probability vector
$\underline a$, the a priori non-stationary sequence becomes stationary. Indeed, given $\underline \varphi
=(\varphi_1, \dots, \varphi_p) \in C^0(M)^p$ and a non-trivial probability vector $\underline a$,
the transfer operator $\tilde{\mathbf{L}}_{\underline a, \underline \varphi}$ acting on $C^0(M)$
is precisely
\begin{equation}\label{eq:RPF-integratedM}
\phi \in C^0(M) \quad \mapsto \quad  \tilde{\mathbf{L}}_{\underline a, \underline \varphi} \phi(x)
	= \sum_{i=1}^p   a_i  \sum_{g_i(y)=x} e^{\varphi_i(y)} \phi(y).
\end{equation}

\subsection*{Averaged and fibered transfer operators for the skew-product}\label{sec:averagedRPF1}

The semigroup actions is naturally associated to a skew-product dynamics defined by
\begin{equation*}
\begin{array}{rccc}
\mathcal{F}_G : & \Sigma_p^+  \times M & \to & \Sigma_p^+  \times M \\
	& (\omega,x) & \mapsto & (\sigma(\omega), g_{\omega_1}(x))
\end{array}
\end{equation*}
where $\omega=(\omega_1,\omega_2, \dots)$. Given a non-trivial probability vector $\underline a=(a_1, a_2, \ldots,a_p)$, where $a_k>0$ for all $k = 1, \dots, p$ and $\sum_{k=1}^{p}\,a_k = 1$, consider the Bernoulli probability measure $\eta_{\underline a} = {\underline a}^{\mathbb N}$
on $\Sigma_p^+$. 
Inspired by the work \cite{Baladi} on random expanding maps, we assign to any $\underline \varphi = (\varphi_1, \dots, \varphi_p) \in C^0(\Sigma_p^+ \times M)^p$ the integrated transfer operators
$$\mathbf{\hat L}_{\underline a, \underline \varphi} : C^0(\Sigma_p^+ \times M) \to C^0(\Sigma_p^+ \times M)$$
defined by
\begin{equation}\label{eq:RPF-integratedSigM}
\mathbf{\hat L}_{\underline a, \underline \varphi}  \psi(\omega,x)
	= \sum_{i=1}^p   a_i \; \mathcal{L}_{g_i,\varphi_i} \psi( i\omega,x)
	= \sum_{i=1}^p   a_i  \sum_{g_i(y)=x} e^{\varphi_i(i\omega, y)} \psi( i\omega,y)
\end{equation}
for every $\psi \in C^0(\Sigma_p^+ \times M)$, where $i\omega$ stands for the sequence $(i, \omega_1, \omega_2, \dots)$.
Observe that, in the particular case of $\underline \varphi=(0, \dots, 0)$, one gets
$\mathbf{\hat L}_{\underline a, \underline \varphi} 1 = \int \deg \,(g_i) \; d\underline a(i)$.
Moreover, $\tilde{\mathbf{L}}_{\underline a, \underline \varphi}^n = \mathbf{L}_{n,\eta_{\underline a}, \varphi}$ for all $n\ge 1.$ Furthermore, any $\varphi\in C^0(M)$ induces a sequence $\underline\varphi(\omega,x) =( \varphi(x), \dots, \varphi(x) ) \in C^0(\Sigma_p^+ \times M)^p$; and, given $\psi \in C^0(\Sigma_p^+ \times M)$ that does not depend on $\omega$, if we consider
$\phi:M \to \mathbb{R}$ defined by $\phi(x)=\psi(1,x)$, then we have
\begin{equation}\label{eq:RPF-integrated}
 \mathbf{\hat L}_{\underline a, \underline \varphi} \, \psi= \mathbf{\tilde L}_{\underline a, \underline \varphi} \,\phi.
\end{equation}
As
$\mathbf{\hat L}_{\underline a, \underline \varphi}$ and $\mathbf{\tilde L}_{\underline a, \underline \varphi}$
are positive operators, their spectral radius are equal to the exponential growth rates of
$\|\mathbf{\hat L}^n_{\underline a, \underline \varphi} 1_{\Sigma_p^+\times M}\|_0$
and $\|\mathbf{\tilde L}^n_{\underline a, \underline \varphi} 1_M\|_0$, respectively. Thus,
$sp(\mathbf{\hat L}_{\underline a, \underline \varphi}) = sp(\mathbf{\tilde L}_{\underline a, \underline \varphi})$.

\begin{remark}
A similar link between transfer operators on the phase space and on the skew-product dynamics has been considered previously in \cite{S00}. However, in this reference the operators are built averaging \emph{normalized} transfer operators for individual dynamics.
\end{remark}

\section{Topological entropy of the semigroup action}\label{sec:topol-entropy}

This section is devoted to the proof of Theorem~\ref{thm:A}, which relates the spectral radius of the Ruelle-Perron-Fr\"obenius sequence of $G$ (Definition~\ref{def:PFsequence}) with the exponential growth rate of periodic points and the topological entropy. First, let us recall the concept of separated points and topological entropy of a semigroup action adopted in \cite{Bufetov,RoVa1}.
Given $\varepsilon>0$ and $\underline g :=g_{i_{n}} \dots g_{i_2} \, g_{i_1}\in G_n$, the \emph{dynamical ball} $B(x,\underline g,\varepsilon)$ is the set
\begin{align}\label{eq:dynball}
B(x,\underline g,\varepsilon)\nonumber
	&:= \Big\{y\in X: d( \underline g_j (y), \underline g_j (x) ) \le \varepsilon, \; \text{ for every } 0\le j \le n \Big\}
\end{align}
where, as before,  for every $1 \le j \le n-1$ we denote by $\underline g_j$ the concatenation $g_{i_{j}} \dots g_{i_2} \, g_{i_1}\in G_j$, and $\underline g_0=id$. We also assign a dynamical metric $d_{\underline g}$ to $M$ by setting
 \begin{equation}\label{eq:dg}
  d_{\underline g} (x, y)
 	:=   \max_{0\le j \le n } \, d(\underline{g}_j(x),\underline{g}_j(y)).
 \end{equation}
It is important to notice that both the dynamical ball and the metric depend on the underlying concatenation of generators $g_{i_n} \dots g_{i_1}$ and not on the group element $\underline g$, since the latter may have distinct representations.

Given $\underline g= g_{i_n} \dots g_{i_1} \in G_n$, we say that a set $K \subset M$ is \emph{$(\underline g, n, \varepsilon)$-separated} if $d_{\underline g}(x,y) > \varepsilon$ for any distinct $x,y \in K$.
The maximal cardinality of a  $(\underline g,\varepsilon, n)$-separated set on $M$ will be denoted by $s(\underline g, n, \varepsilon)$.
The topological entropy of a semigroup action estimates the growth rate in $n$ of the number of orbits of
length $n$ 
up to some small error $\varepsilon$.

\begin{definition}\label{de.top_entropy}
The \emph{topological entropy} of the semigroup action $S:G\times M \to M$ is the limit
\begin{align}
h_{\text{top}}(S)
    =\lim_{\varepsilon\to 0}\,\,\limsup_{n\to\infty}\frac1n\log\Big(\frac1{p^n}\sum_{|\underline g|=n}\,s(\underline g,n,\varepsilon)\Big).
\end{align}
\end{definition}

\begin{remark} This notion of topological entropy should be compared with the one proposed by Ghys, Langevin and Walczak, where the authors compute the asymptotic exponential growth rate of points that are separated by some group element (see \cite{GLW} for the precise definition). This corresponds to the largest exponential growth rate, while the definition we adopt here observes the growth rates of separated points averaged along semigroup elements.
\end{remark}

In the context of Ruelle-expanding dynamics this notion is connected to the largest exponential growth rate of periodic points (cf. Subsection~\ref{zeta function}). This is why we also consider the following asymptotic speed.

\begin{definition}\label{de.per_entropy}
The \emph{periodic entropy} of the semigroup action $S: G \times M \to M$ is the limit
\begin{align}\label{def:per growth}
\wp(S)=\limsup_{n\to + \infty} \,\frac{1}{n}\log \, (\max\{ N_n(G),1 \} )
\end{align}
where
$$N_n(G)=\frac{1}{p^n}\sum_{|\underline g|=n}\, \sharp \,\text{Fix}(\underline g).$$
\end{definition}

Observe that in order for $\wp(S)$  be a finite value, the set $\text{Fix}(\underline g)$ must be finite
for each $\underline g \in G\setminus\{id\}$, which holds for instance when $\underline{g}$ is expansive.
A map $\underline g \in G$ is said to be \emph{expansive} if there exists $\varepsilon_{\underline g} > 0$ such that, whenever $x,y \in M$ and $x\neq y$, then
$$\max\,\{d(\underline{g}^\ell(x),\underline{g}^\ell(y)) \colon \ell \in \mathbb{N}_0\} \geq \varepsilon_{\underline{g}}.$$

Within Ruelle-expanding dynamics, both notions of entropy (topological or periodic) may be estimated from the knowledge of the spectral radius of the  Ruelle-Perron-Fr\"obenius operator. The similar concept for semigroups is as follows.

\begin{definition}
Given $\varphi\in C^0(M)$, the \emph{spectral radius} $sp\,((\mathbf{L}_{n,\varphi})_{n \in \mathbb{N}})$ of the Ruelle-Perron-Fr\"obenius sequence $(\mathbf{L}_{n, \varphi})_{n \in \mathbb{N}}$ of positive operators in $C^0(M)$ is defined as
$$\log \,sp\,((\mathbf{L}_{n, \varphi})_{n})
	:= \limsup_{n\to\infty} \frac1n \log \|\mathbf{L}_{n,\varphi} \,(\textbf{1})\|_{C^0}.$$
\end{definition}
Notice that, if $\varphi\equiv 0$ and $G_1 \subset End^1(M)$, then
\begin{equation}\label{eqL0}
\mathbf{L}_{n,0} \,(\textbf{1})(x)
	= \frac1{p^n} \sum_{|\underline g|=n} \mathcal{L}_{\underline g,0} \,(\textbf{1})(x)
	= \frac1{p^n} \sum_{|\underline g|=n} \sharp \,\underline g^{-1}(x)
	= \frac1{p^n} \sum_{|\underline g|=n} \deg\,(\underline g)
\end{equation}

After \cite{RoVa1}, we know that an action of a finitely generated semigroup of $C^1$-expanding maps is strongly $\delta$-expansive, for some $\delta>0$, a notion that we now recall.

\begin{definition}\label{de.delta_expansive}
Given $\delta>0$, we say that a continuous semigroup action $S:G\times M \to M$ on a compact Riemannian manifold $M$ is \emph{$\delta$-expansive} if, whenever $x \not= y\in M$, there exist $\kappa\in \mathbb N$ and $\underline g\in G_\kappa$ such that
$d(\underline g(x),\underline g(y))>\delta.$
$S$ is said to be \emph{strongly $\delta$-expansive} if, for any $\gamma>0$, there exists $\kappa_\gamma\geq 1$ such that, for every $x\not= y\in M$ with $d(x,y)\geq\gamma$, for all $\kappa\geq \kappa_\gamma$ and any $\underline g\in G^*_\kappa$, we have
$
d_{\underline g}(x,y) =  \max_{0\le j \le n } \, d(\underline{g}_j(x),\underline{g}_j(y)) >\delta.
$
\end{definition}

This is a key property that eases our task of computing the topological entropy of a semigroup action. Indeed, when the action is strongly $\delta$-expansive, the topological entropy can be computed independently of $\varepsilon$. More precisely,

\begin{lemma}\cite[Theorem~25]{RoVa1}\label{le.varepsilon}
Let $G$ be the semigroup generated by a set $G_1=\{Id,g_1,\dots,g_p\}$, where $G_1^*$ is a finite set of Ruelle-expanding maps on a compact metric space $M$ and $S:G\times M \to M$ its continuous semigroup action. Take $0<\varepsilon<\delta$. Then
$$h_{\text{top}}(S)=\limsup_{n\to\infty}\frac1n\log\Big(\frac1{p^n}\sum_{|\underline g|=n}\,s(\underline g,n,\varepsilon)\Big).$$
\end{lemma}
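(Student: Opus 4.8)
The plan is to use the strong $\delta$-expansivity of the action — available for Ruelle-expanding generators by \cite{RoVa1}, as recalled just before the statement — to show that
$$H(\varepsilon):=\limsup_{n\to\infty}\frac1n\log\Big(\frac1{p^n}\sum_{|\underline g|=n}s(\underline g,n,\varepsilon)\Big)$$
does not depend on $\varepsilon$ for $0<\varepsilon<\delta$. Since $s(\underline g,n,\varepsilon)$ is non-increasing in $\varepsilon$, so is $H$, and therefore $h_{\text{top}}(S)=\lim_{\varepsilon\to0}H(\varepsilon)=\sup_{\varepsilon>0}H(\varepsilon)$ exists with $H(\varepsilon')\ge H(\delta)$ whenever $\varepsilon'<\delta$. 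Hence it will be enough to prove the opposite inequality $H(\varepsilon')\le H(\delta)$ for every $0<\varepsilon'<\delta$: combined with monotonicity this forces $H$ to be constant and equal to $h_{\text{top}}(S)$ on $(0,\delta)$.

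The core of the argument will be a comparison that trades the fine scale $\varepsilon'$ for the scale $\delta$ at the price of enlarging the words by a bounded, $n$-independent amount. Fixing $0<\varepsilon'<\delta$, let $\kappa=\kappa_{\varepsilon'}$ be the integer supplied by strong $\delta$-expansivity with $\gamma=\varepsilon'$, and fix once and for all some word $\underline h^\ast\in G^*_\kappa$. The claim I would establish is that, for every $\underline g\in G^*_n$ and all $x,y\in M$,
$$d_{\underline g}(x,y)>\varepsilon'\quad\Longrightarrow\quad d_{\underline h^\ast\underline g}(x,y)>\delta.$$
To see this, if $d_{\underline g}(x,y)>\varepsilon'$ I pick $0\le j_0\le n$ with $d(\underline g_{j_0}(x),\underline g_{j_0}(y))>\varepsilon'$ and set $u=\underline g_{j_0}(x)$, $v=\underline g_{j_0}(y)$; since $d(u,v)\ge\varepsilon'=\gamma$, applying strong $\delta$-expansivity to $u\ne v$ and to the word obtained by reading $\underline h^\ast\underline g$ from the $j_0$-th generator onward (which has length $(n-j_0)+\kappa\ge\kappa_\gamma$) produces an index at which the two orbits are $\delta$-apart; identifying that index with the corresponding partial composition of $\underline h^\ast\underline g$ gives $d_{\underline h^\ast\underline g}(x,y)>\delta$.

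The one delicate point, and where I expect most of the care to be needed, is this last identification: one must match the partial maps $\underline w_\ell$ of the truncated word against the partial maps $(\underline h^\ast\underline g)_{j_0+\ell}$ of the full concatenation, checking that the internal separation index $j_0$ is transported consistently so that $\underline w_\ell(u)=(\underline h^\ast\underline g)_{j_0+\ell}(x)$ throughout the relevant range of $\ell$. This is pure bookkeeping with the indices, but it is the place where the non-invertibility and the word structure interact.

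Granting the claim, any $(\underline g,n,\varepsilon')$-separated set is at once $(\underline h^\ast\underline g,n+\kappa,\delta)$-separated, so $s(\underline g,n,\varepsilon')\le s(\underline h^\ast\underline g,n+\kappa,\delta)$. As $\underline g\mapsto\underline h^\ast\underline g$ injects $G^*_n$ into $G^*_{n+\kappa}$, I would sum over $|\underline g|=n$ and average to obtain
$$\frac1{p^n}\sum_{|\underline g|=n}s(\underline g,n,\varepsilon')\ \le\ p^{\kappa}\,\frac1{p^{n+\kappa}}\sum_{|\underline w|=n+\kappa}s(\underline w,n+\kappa,\delta).$$
Passing to $\frac1n\log(\cdot)$ and $\limsup_{n\to\infty}$, the subexponential factor $p^{\kappa}$ and the shift $n\mapsto n+\kappa$ contribute nothing, so $H(\varepsilon')\le H(\delta)$. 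With the monotonicity from the first step this yields $H(\varepsilon')=H(\delta)=h_{\text{top}}(S)$ for every $0<\varepsilon'<\delta$, which is exactly the assertion of the lemma.
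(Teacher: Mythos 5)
Your argument is correct. Note first that the paper itself does not prove this lemma: it is imported verbatim from \cite[Theorem~25]{RoVa1}, so there is no internal proof to compare against. Your proof is a sound, self-contained reconstruction along the standard lines for expansive systems: monotonicity of $\varepsilon\mapsto s(\underline g,n,\varepsilon)$ reduces the problem to the single inequality $H(\varepsilon')\le H(\delta)$, and that inequality is obtained by trading scale for word length. The one point you flag as delicate does close up: if $\underline w$ denotes the word formed by the letters of $\underline h^\ast\underline g$ in positions $j_0+1,\dots,n+\kappa$, then by associativity of concatenation $\underline w_\ell\circ \underline g_{j_0}=(\underline h^\ast\underline g)_{j_0+\ell}$ for all $0\le \ell\le n-j_0+\kappa$, so $d_{\underline h^\ast\underline g}(x,y)\ge d_{\underline w}(u,v)>\delta$ by strong $\delta$-expansivity applied to $u=\underline g_{j_0}(x)$, $v=\underline g_{j_0}(y)$ with $d(u,v)>\varepsilon'=\gamma$ and $|\underline w|\ge\kappa_\gamma$. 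The counting step is also legitimate because the sums $\sum_{|\underline g|=n}$ in this paper range over concatenations (words), not over the semigroup elements they induce, so $\underline g\mapsto\underline h^\ast\underline g$ is indeed injective and the factor $p^{\kappa}$ is subexponential. The only caveat worth recording is that your proof presupposes that the $\delta$ in the statement is a strong-expansivity constant for the action (which is how the paper uses it, via Lemma~\ref{le:R-expanding}).
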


Additionally, it was proved in \cite{RoVa1} that the topological entropy is a lower bound for the exponential growth rate of periodic points. For that purpose, the authors introduced the following pathwise specification property.

\begin{definition}\label{def:orbital-sepc}
We say that the continuous semigroup action $S: G\times M \to M$ associated to the finitely generated semigroup $G$ satisfies the \emph{(strong) orbital specification property} if, for any $\vep>0$, there exists $T(\vep)>0$ such that, given $k \in \mathbb{N}$, for any $\underline h_{p_j}\in G^*_{p_j}$ with $p_j \ge T(\vep)$ for every $1\leq j\leq k$, for each choice of $k$ points $x_1, \dots, x_k$ in $M$, for any natural numbers $n_1, \dots, n_k$ and any semigroup element
$\underline g_{n_j, j} = g_{i_{n_j}, j} \dots g_{i_2,j} \, g_{i_1,j} \in G_{n_j},$
where $j \in \{1,\dots, k\}$, there exists $x\in M$ such that
$
d(\underline g_{{\ell}, 1} (x),\,\, \underline g_{{\ell}, 1} (x_1) ) < \vep \; \text{for all} \,  1 \le \ell \le n_1
$
and
$
d(\underline g_{{\ell}, j} \, \underline  h_{p_{j-1}} \, ... \, \underline g_{{n_2}, 2} \, \underline h_{p_1} \, \underline g_{{n_1}, 1} (x), \,\, \underline g_{{\ell}, j} (x_j)) <  \vep
$
for all $2 \le j \le k, \,\, 1 \le \ell \le n_j$ where $\underline g_{\ell, j} = g_{i_{\ell}, j} \dots g_{i_1,j}$. We say that the semigroup action satisfies the \emph{periodic orbital specification property} if the point $x$ can be chosen periodic.
\end{definition}

\begin{theorem}\cite[Theorem~28]{RoVa1}\label{thm:RoVaI}
Let $G$ be the semigroup generated by $G_1=\{Id,g_1,\dots,g_p\}$, where $G_1^*$ is a finite set of Ruelle-expanding maps on a compact connected metric space $M$ and $S:G\times M \to M$ its continuous semigroup action. Then $G$ satisfies the periodic orbital specification property and
$$ 0<h_{\text{top}}(S) \leq \limsup_{n\to\infty} \frac{1}{n}\log \Big( \frac{1}{p^n} \sum_{|\underline g|=n} \,\sharp \,\text{Fix}(\underline g) \Big).$$
\end{theorem}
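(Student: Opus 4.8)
The plan is to extract the two assertions of the statement—the periodic orbital specification property and the chain of (in)equalities $0<h_{\text{top}}(S)\le \limsup_n\frac1n\log N_n(G)$—by reducing the entropy inequality to specification through a Bowen-type count of periodic points, and then by producing the specification property from uniform expansion and uniform exactness of the generators. Throughout I write $N_n(G)=\frac1{p^n}\sum_{|\underline g|=n}\sharp\,\text{Fix}(\underline g)$ and fix the uniform Ruelle-expanding constants $c,r,\rho$ coming from the finitely many generators.

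First I would dispose of positivity. For $\varepsilon<c$ and any word $\underline g$ of length $n$, the $\deg(\underline g)=\prod_{j}\deg(g_{i_j})$ distinct $\underline g$-preimages of a single point form a $(\underline g,n,\varepsilon)$-separated set: if $y\ne y'$ satisfy $\underline g(y)=\underline g(y')$, then at the last level $j^\ast$ where $\underline g_{j^\ast}(y)\ne \underline g_{j^\ast}(y')$ these two points have the same image under the next generator, so condition (1) of Definition~\ref{de.Ruelle-expanding} forces $d(\underline g_{j^\ast}(y),\underline g_{j^\ast}(y'))>c$, whence $d_{\underline g}(y,y')>c>\varepsilon$. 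Thus $s(\underline g,n,\varepsilon)\ge \deg(\underline g)$ and
\[
\frac1{p^n}\sum_{|\underline g|=n}s(\underline g,n,\varepsilon)\ \ge\ \frac1{p^n}\sum_{|\underline g|=n}\deg(\underline g)=\Big(\tfrac1p\sum_{i=1}^p\deg(g_i)\Big)^n\ \ge\ 2^n,
\]
since each generator, being a covering self-map of the connected space $M$ that is not a homeomorphism, has degree at least two. By Definition~\ref{de.top_entropy} this already yields $h_{\text{top}}(S)\ge\log 2>0$.

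For the inequality I would use the periodic orbital specification property as a black box. Fix $0<\varepsilon<\min\{\delta,c\}$ and set $T=T(\varepsilon/2)$ as in Definition~\ref{def:orbital-sepc}. Given $\underline g$ with $|\underline g|=n$ and a $(\underline g,n,\varepsilon)$-separated set $K$ of maximal cardinality $s(\underline g,n,\varepsilon)$, the periodic orbital specification property provides, for \emph{every} closing word $\underline h$ with $|\underline h|=T$ and every $x\in K$, a point $y_x\in\text{Fix}(\underline h\,\underline g)$ with $d_{\underline g}(y_x,x)<\varepsilon/2$; as $K$ is $\varepsilon$-separated, $x\mapsto y_x$ is injective, so $\sharp\,\text{Fix}(\underline h\,\underline g)\ge s(\underline g,n,\varepsilon)$ for each such $\underline h$. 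Because every word of length $n+T$ factors uniquely as $\underline h\,\underline g$ with $|\underline h|=T$ and $|\underline g|=n$, summing over $\underline h$ and $\underline g$ gives
\[
N_{n+T}(G)=\frac1{p^{\,n+T}}\sum_{|\underline h|=T}\sum_{|\underline g|=n}\sharp\,\text{Fix}(\underline h\,\underline g)\ \ge\ \frac{p^{T}}{p^{\,n+T}}\sum_{|\underline g|=n}s(\underline g,n,\varepsilon)=\frac1{p^n}\sum_{|\underline g|=n}s(\underline g,n,\varepsilon).
\]
Applying $\limsup_n\frac1n\log$, the left-hand side has the same exponential rate $\wp(S)$ as $N_m(G)$ because $T$ is fixed, while the right-hand side equals $h_{\text{top}}(S)$ by Lemma~\ref{le.varepsilon} (applicable since $\varepsilon<\delta$ and the action is strongly $\delta$-expansive). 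Hence $h_{\text{top}}(S)\le\wp(S)=\limsup_n\frac1n\log N_n(G)$.

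It remains to establish the periodic orbital specification property, which I regard as the main obstacle. The shadowing point is built in the usual way: one pulls the prescribed targets back through the segment words and the transition words by the contractive inverse branches of Section~\ref{sec:Ruelle-expanding}, the uniform rate $\rho<1$ ensuring that the nested inverse images converge and that the full composite word, restricted to the relevant dynamical ball, is a contraction whose unique fixed point is the desired \emph{periodic} shadowing point. The genuinely hard input—needed to bridge one segment to the next through an \emph{arbitrary} transition word $\underline h$ of length $\ge T(\varepsilon)$—is a \textbf{uniform exactness} statement: there are a scale $r_0>0$ and $N_1\in\mathbb N$ with $\underline w(B_{r_0}(x))=M$ for all $x\in M$ and all words $\underline w$ with $|\underline w|\ge N_1$; equivalently, preimages under long words are $\varepsilon$-dense, so the bridge can always land within $\varepsilon$ of the next target. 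I would prove this in two stages. First, the contractive-branch estimate $\underline w(B_\varepsilon(x))\supseteq B_{\min\{r,\,\rho^{-|\underline w|}\varepsilon\}}(\underline w(x))$ shows that after a uniform number $k_0(\varepsilon)$ of letters the image already contains a ball of the fixed radius $r$, and that such an $r$-ball is preserved by every further generator. Second—and this is where connectedness of $M$ and the topological mixing of each generator (conclusion (C5) of the spectral decomposition) are essential—one upgrades ``contains an $r$-ball'' to ``equals $M$'' uniformly over words. Here I would exploit the monotonicity $\underline w(B_r(x))=M\Rightarrow (g_i\underline w)(B_r(x))=M$ together with the observation that, along any infinite word, a persistent ``hole'' $C_n=M\setminus \underline w_n(B_r(x_\ast))$ satisfies $g_{i_{n+1}}^{-1}(C_{n+1})\subseteq C_n$ and would produce a point whose preimages under arbitrarily long words avoid a fixed ball, contradicting the exactness of the individual generators through a compactness argument on $\Sigma_p^+$. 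Resolving this hole-elimination cleanly is the crux on which the whole statement rests.
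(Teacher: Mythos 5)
Before anything else, note that the paper itself offers no proof of this statement: it is imported verbatim from \cite{RoVa1} as Theorem~28 there, so your proposal is being compared against a citation rather than an argument. Judged on its own terms, two of your three steps are sound. The positivity part is correct: preimage sets of a single point under a word $\underline g$ are $(\underline g,n,\varepsilon)$-separated once $\varepsilon<c$ by the separation condition (1) of Definition~\ref{de.Ruelle-expanding} applied at the last level of agreement, and each generator has degree at least two because a degree-one Ruelle-expanding map would be an injective covering map, forcing every nonempty open set to equal $M$ (your ``not a homeomorphism'' deserves this one-line justification, but it is a standard fact). Your deduction of $h_{\text{top}}(S)\le\wp(S)$ \emph{from} the periodic orbital specification property --- shadow each point of a maximal $(\underline g,n,\varepsilon)$-separated set by a fixed point of $\underline h\,\underline g$, get injectivity from the triangle inequality, and average over the $p^{T}$ closing words before invoking Lemma~\ref{le.varepsilon} --- is correct (modulo the $\ell=0$ indexing in Definition~\ref{def:orbital-sepc}, which the paper itself reads as including level zero) and is the same gluing mechanism the paper uses to prove superadditivity in Theorem~\ref{thm:A}.

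The genuine gap is the one you flag yourself: the periodic orbital specification property is one of the two assertions of the theorem \emph{and} the sole input to your entropy inequality, and you do not prove it. Everything hinges on your ``uniform exactness'' claim --- that there exist $r_0>0$ and $N_1$ with $\underline w(B_{r_0}(x))=M$ for every word with $|\underline w|\ge N_1$ and every $x$ --- and your treatment of it ends with a description of the obstacle (``hole-elimination'') rather than an argument. Be aware that the difficulty is real only at the stated generality. For $C^1$-expanding maps on a Riemannian manifold one has $g(B_\delta(y))\supseteq B_{\lambda\delta}(g(y))$ at \emph{every} scale $\delta$ (lift paths through the local diffeomorphism and use that lengths contract by $\lambda^{-1}$), so images of balls grow geometrically under arbitrary words until they exhaust $M$, and uniform exactness is immediate; this would close your proof in the $End^2(M)$ setting. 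But for Ruelle-expanding maps on a general compact connected metric space the inverse-branch estimate you invoke saturates at the fixed radius $r$, and upgrading ``contains an $r$-ball for every suffix of the word'' to ``equals $M$'' does not follow from the topological exactness of the individual generators via the compactness argument you sketch: passing to a limiting infinite word merely reduces the problem to exactness along an arbitrary fibered sequence, which is exactly what has to be proved (the complements $C_m=M\setminus\underline w_m(B)$ can a priori remain nonempty forever while each $\underline w_m(B)$ contains an $r$-ball). That missing lemma is the actual content of \cite[Theorem~28]{RoVa1}; without it the proposal establishes neither assertion of the statement.
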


We will show that the equality holds and that the previous $\limsup$ is indeed a limit.

\subsection{Proof of Theorem~\ref{thm:A}}

Let $G$ be the semigroup generated by $G_1=\{Id,g_1,\dots,g_p\}$, with $G_1^*$ a finite set of Ruelle-expanding maps on a compact connected metric space $M$. From \cite[Theorem~28]{RoVa1}, we already know that $h_{\text{top}}(S)\leq \wp(S)$. We are left to show the opposite inequality. First we will prove that the class of Ruelle-expanding maps is closed under concatenation, and so forms a semigroup.

\begin{lemma}\label{le:R-expanding}
If each map in the finite  set
$G_1^*$ is Ruelle-expanding, then $\underline{g}$ is Ruelle-expanding for
any $\underline{g} \in G-\{Id\}$. Moreover, there exists $\delta>0$ such that the semigroup action $S: G \times M \to M$
is strongly $\delta$-expansive.
\end{lemma}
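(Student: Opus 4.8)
The plan is to establish the two assertions of the lemma in sequence.

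\medskip

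\textbf{First, the closure of Ruelle-expanding maps under concatenation.} Since $\underline g=g_{i_n}\dots g_{i_1}$ is a composition of finitely many Ruelle-expanding maps, I would verify the two defining conditions of Definition~\ref{de.Ruelle-expanding} directly. For the openness/preimage condition (2), I would use that each $g_{i_j}$ admits, locally, contractive inverse branches with contraction constant $\rho_{i_j}<1$ on balls of radius $r_{i_j}$; composing these branches in the appropriate order produces a local inverse branch $\varphi$ for $\underline g$ that contracts by $\rho_{\underline g}:=\prod_{j}\rho_{i_j}<1$, defined on a ball whose radius can be taken uniform over all words once we set $\rho_0=\max_j \rho_{i_j}$ and $r_0=\min_j r_{i_j}$. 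The only care needed is tracking the domain shrinkage: the branch of the innermost factor must have image inside the domain of the next branch, which holds provided we shrink the initial ball by the contraction already accumulated. For the discrete-fibers condition (1), I would argue that $\underline g(x)=\underline g(y)$ with $x\ne y$ forces, along the composition, some intermediate point where two distinct preimages under a single generator coincide in image; applying condition (1) for that generator yields a lower bound $d(x,y)>c_{\underline g}$ for a uniform $c_{\underline g}$ obtained by pulling the separation constant $c$ back through the contractive branches. This is routine once the branch structure above is in place.

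\medskip

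\textbf{Second, strong $\delta$-expansivity.} The cleanest route is to invoke local expansion. Each generator satisfies $d(g_{i}(x),g_i(y))>\lambda_i\,d(x,y)$ whenever $d(x,y)<\delta_i$, with $\lambda_i>1$; set $\lambda_0=\min_i\lambda_i>1$ and $\delta_*=\min_i\delta_i$. Given $x\ne y$ with $d(x,y)\ge\gamma$, I would argue that for any word $\underline g=g_{i_n}\dots g_{i_1}$ the consecutive images $\underline g_j(x),\underline g_j(y)$ must separate by more than the fixed scale $\delta:=\min\{\delta_*,\varepsilon\}$ within a bounded number of steps: as long as the points stay within distance $\delta_*$ the distance is multiplied by at least $\lambda_0$ at each application, so after at most $\kappa_\gamma:=\lceil \log(\delta/\gamma)/\log\lambda_0\rceil$ steps the distance must exceed $\delta$ (otherwise it would have grown geometrically past $\delta_*$, a contradiction). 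Consequently, for every word of length $\kappa\ge\kappa_\gamma$ one has $d_{\underline g}(x,y)=\max_{0\le j\le\kappa}d(\underline g_j(x),\underline g_j(y))>\delta$, which is precisely the strong $\delta$-expansivity of Definition~\ref{de.delta_expansive}. The threshold $\kappa_\gamma$ depends only on $\gamma$ (through the uniform constants $\lambda_0,\delta_*$) and not on the particular word, as required.

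\medskip

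\textbf{Main obstacle.} The delicate point is the uniformity of constants across the infinitely many words $\underline g\in G^*$. For the first part, the domain of the composite inverse branch shrinks with each composition, so I must confirm that a single radius $r>0$ and a single separation constant $c>0$ serve all words simultaneously; this works because contraction only helps (the branch of a longer word is defined on at least as large a ball as its contraction permits, and the contraction factors stay bounded below $1$ by $\rho_0<1$), but the bookkeeping of which ball each branch lands in deserves explicit attention. For the second part, the subtlety is that local expansion only operates while points remain within the scale $\delta_*$; once they exceed it the expansion estimate fails, but this is harmless because exceeding $\delta_*\ge\delta$ is exactly the conclusion we want. I expect these uniformity checks—rather than any single hard estimate—to constitute the real work of the proof.
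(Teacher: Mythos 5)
Your overall architecture matches the paper's for the first assertion and diverges from it for the second; both halves are essentially sound, but one mechanism you describe points the wrong way and needs repair.

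For closure under composition, your treatment of condition (2) of Definition~\ref{de.Ruelle-expanding} — composing inverse branches to get contraction $\prod_j \rho_{i_j}$, with a uniform radius $r_0=\min_j r_{i_j}$ because the accumulated contraction keeps each branch's image inside the next branch's domain — is exactly the paper's argument. For condition (1), however, the mechanism you propose (``pulling the separation constant $c$ back through the contractive branches'') cannot produce the required bound. In the relevant case $g_{i_1}(x)\neq g_{i_1}(y)$ but $g_{i_2}g_{i_1}(x)=g_{i_2}g_{i_1}(y)$, you know $d(g_{i_1}(x),g_{i_1}(y))>c_2$ and must deduce a \emph{lower} bound on $d(x,y)$; the contractive branches only yield the upper bound $d(x,y)\le \rho\, d(g_{i_1}(x),g_{i_1}(y))$, which is useless here. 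The paper instead uses uniform continuity of $g_{i_1}$: choose $\delta_1>0$ so that $d(a,b)\le\delta_1$ implies $d(g_{i_1}(a),g_{i_1}(b))\le c_2$, whence $d(x,y)>\delta_1$, and set $c_{12}=\min\{c_1,\delta_1\}$. This is a one-line fix, but as written your step would fail.

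For strong $\delta$-expansivity you argue forwards — distances below the scale $\delta_*$ are multiplied by $\lambda_0>1$ at each step, so they must escape past $\delta$ within $\kappa_\gamma$ steps — whereas the paper argues backwards: with $\delta=r/2$, if $d_{\underline g}(x,y)\le\delta$ for a word of length $\kappa\ge\kappa_\gamma$, the composed inverse branch gives $d(x,y)\le\rho^{\kappa}d_{\underline g}(x,y)<\gamma$, a contradiction. The two arguments are dual and both valid; the paper's has the advantage of using only the inverse-branch data that Definition~\ref{de.Ruelle-expanding} literally supplies, while yours additionally requires the forward local-expansion inequality for each generator (true, and implicit in the phrase ``locally expanding and open'', but deriving it from conditions (1)--(2) again needs the same uniform-continuity input). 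Two small repairs in your version: take $\delta$ strictly smaller than $\delta_*$ so that leaving the expansion regime already forces $d_{\underline g}(x,y)>\delta$ rather than $\ge\delta$, and remove the undefined $\varepsilon$ from $\delta:=\min\{\delta_*,\varepsilon\}$.
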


\begin{proof}
Assume that $g_1$ and $g_2$ are Ruelle-expanding maps. We claim that the composition $g_2\,g_1$ is a Ruelle-expanding map.
Let $c_1>0$ and $c_2>0$ be such that, for all $x, y \in M$ with $x\neq y$
$$
g_1(x)=g_1(y) \, \Rightarrow \, d(x,y)>c_1
	\quad\text{and}\quad
	g_2(x)=g_2(y) \, \Rightarrow \, d(x,y)>c_2.
$$
Consider the composition $g_2\,g_1$ and $x, y \in X$ with $x\neq y$ and assume that $g_2\,g_1(x)=g_2\,g_1(y)$. Then,
either $g_1(x)=g_1(y)$, in which case $d(x,y)>c_1$; or $g_1(x)\neq g_1(y)$, in which case one has $d(g_1(x),g_1(y))>c_2$.
In the latter case, observe that since $g_1$ is uniformly continuous, there is $\delta_1>0$ such that $d(g_1(a),g_1(b))\leq c_2$ whenever
 $d(a,b) \leq \delta_1$. Thus, from $d(g_1(x),g_1(y))>c_2$ we conclude that $d(x,y)> \delta_1$.
Therefore, if $c_{12}=\min\,\{c_1, \delta_1\}>0$, then
$$
\forall \,x, y \in M:\,\, x\neq y,\,\, g_2\,g_1(x)=g_2\,g_1(y) \, \Rightarrow \, d(x,y)>c_{12}.
$$
Now, let $r_1$, $r_2$, $\rho_1 <1$ and $\rho_2 <1$ be positive constants such that, for each $x \in M$, for every $a \in g_2^{-1}(\{x\})$ and every $b \in g_1^{-1}(\{a\})$, there exists a map $\phi_2: B_{r_2}(x) \rightarrow M$, defined on the open ball centered at $x$ with radius $r_2$ such that $\phi_2(x)=a$, $g_2\circ \phi_2 = id$ and
$$d(\phi_2(z),\phi_2(w))\leq \rho_2 \,d(z,w) \quad \forall\,z,w \in B_{r_2}(x),$$
so $\phi_2(B_{r_2}(x))\subset B_{\rho_2\,r_2}(a)$; and there is another map $\phi_1: B_{r_1}(a) \rightarrow M$, defined on the open ball centered at $a$ with radius $r_1$, satisfying $\phi_1 (a)=b$, $g_1\circ \phi_1 = id$ and
$$d(\phi_1(z),\phi_1(w))\leq \rho_1 \,d(z,w) \quad \forall\,z,w \in B_{r_1}(a).$$
The uniform contraction rate $\rho_2$ of $\phi_2$ and the uniform size of the balls associated to the contractive branches of $g_1$ allow us to find
$
0 <r_{12} \leq  r_2
$
such that, for any $x \in M$ and every $a \in g_2^{-1}(\{x\})$, we have
$
\phi_2\,(B_{r_{12}}(x)) \subset B_{r_1}(a).
$
For instance, we may take $r_{12} = \min\,\{r_1, r_2\}$. Therefore, the map $\phi_{12}: B_{r_{12}}(x) \rightarrow M$
given by
$\phi_{12} \equiv \phi_1\,\phi_2$
is well defined and is a contractive branch for $g_2\,g_1$, since it has the following properties:
\begin{enumerate}
\item $\phi_{12} (x)= \phi_{1}\,\phi_{2}(x)=\phi_{1}(a)= b;$
\item $(g_2\,g_1)\circ \phi_{12} = (g_2\,g_1)\circ (\phi_{1}\, \phi_{2}) = g_2 \, (g_1\circ \phi_{1}) \, \phi_{2} = id$;
\item $d(\phi_{12}(z),\phi_{12}(w))\leq \rho_1 \,d(\phi_{2}(z),\phi_{2}(w)) \leq \rho_1\,\rho_2 \,d(z,w)$, for all $z,\,w \in B_{r_{12}}(x).$
\end{enumerate}
Thus, $g_2\,g_1$ is Ruelle-expanding, which proves our claim.

The previous computations also yield that, if $\rho_i\in (0,1)$ denotes the backward contraction rate for $g_i \in G_1^*$ and $r_i>0$ is so that all inverse branches for $g_i$
are defined in balls of radius $r_i$, then every map $g_{i_2} g_{i_1}$ is Ruelle-expanding and its inverse branches are defined in balls of radius $r$ with backward contraction rates $\rho$, where
\begin{equation}\label{eq:constants-Ru}
r=\min\{ r_i : 1\le i \le p\}
	\quad\text{and}\quad
	\rho =\min\{ \rho_i : 1\le i \le p\}.
\end{equation}

We now proceed by induction on $n$. If, for a fixed positive integer $n$, the concatenation of $n$ Ruelle-expanding maps is Ruelle-expanding, then, considering $n+1$ such maps, say
$g_{i_{n+1}} \,g_{i_{n}} \,\cdots \,g_{i_1},$
we may split their composition into the concatenation of two Ruelle-expanding maps
$g_{i_{n+1}} \, (g_{i_{n}} \,\cdots \,g_{i_1})$
and apply what we have just proved. This finishes the proof of the first part of the lemma.

We now prove that $S : G \times M \to M$ is strongly $\delta$-expansive for some $\delta>0$. Set $\delta=\frac{r}2$.
Given $\gamma>0$, let $\kappa_\gamma\ge 1$ be such that $\rho^{\kappa_\gamma} \delta < \gamma$, where $\rho$ is defined by ~\eqref{eq:constants-Ru}.
Now, given any points $x\not= y\in X$ with $d(x,y)\geq\gamma$ and $\underline g\in G^*_\kappa$ with $\kappa\geq \kappa_\gamma$,
clearly $d_{\underline g}( x,y )>\delta$, otherwise,
$$d(x,y)\leq \rho^{\kappa_\gamma} d(\underline g(x),\underline g(y))
	\le \rho^{\kappa_\gamma} d_{\underline g}(x,y)  <\gamma$$
which leads to a contradiction. This completes the proof of the lemma.
\end{proof}

Let us resume the proof of Theorem~\ref{thm:A}. Recall, from Lemma~\ref{le.varepsilon}  
and the fact that the semigroup action is strongly expansive, that the computation of the topological entropy of the semigroup action may be done with a well chosen, but fixed, $\varepsilon$. More precisely,
if $\delta>0$ is given by the proof of Lemma~\ref{le:R-expanding} and $0<\varepsilon<\delta$ then
$$
h_{\text{top}}(S)=\limsup_{n\to\infty}\frac1n\log\left(\frac1{p^n}\sum_{|\underline{g}|=n}\,s(\underline g,n,\varepsilon)\right).
$$
Fix one such $0<\varepsilon <\delta$. We claim that for every $\underline g\in G$ such that $|\underline{g}|=n$, the set $\text{Fix}(\underline g)$ is
$(\underline g, n, \varepsilon)$-separated. Otherwise, there would exist $P \neq Q \in \text{Fix}(\underline g)$ which were not $(\underline g, n, \varepsilon)$-separated, that is, such that $d( \underline g_m (P), \underline g_m (Q)  ) <\varepsilon$ for every
$0\le m \le n.$
If $\gamma=d(P,Q)/2$ and $k_\gamma \ge 1$ is given by Lemma~\ref{le:R-expanding}, then
$|\underline g^{\kappa_\gamma}|= n \kappa_\gamma \ge \kappa_\gamma$ and
$d_{\underline g^{\kappa_\gamma}}( P,Q  ) = d_{\underline g}(P,Q) <\varepsilon$,
which leads to a contradiction. This proves the claim. Therefore, $s(\underline g, n,\varepsilon) \geq \sharp \,\text{Fix}(\underline g)$, for every $\underline g\in G$ with $|\underline{g}|=n$, and,
consequently,
$$
\frac1{p^n}\sum_{|\underline{g}|=n}\,s(\underline g, n,\varepsilon) \geq \frac1{p^n}\sum_{|\underline{g}|=n}\,\sharp \,\text{Fix}(\underline g)
$$
and so
$$
h_{\text{top}}(S)
	= \limsup_{n\to\infty}\frac1n\log\left(\frac{1}{p^n}\sum_{|\underline g|=n}\,s(\underline g,n,\varepsilon)\right)
	\geq \limsup_{n\to\infty}\frac1n\log\left(\frac{1}{p^n}\sum_{|\underline g|=n}\,\text{Fix}(\underline g)\right)
	= \wp(S).
$$

To complete the proof  we have to show that the $\limsup$ in the definition of $\wp(S)$ is indeed a limit. First we notice that, as $G$ is finitely generated by Ruelle-expanding maps, by \cite[Theorem 16]{RoVa1}, it satisfies the periodic orbital specification property. Fix $\varepsilon \in (0,\delta)$, let $T(\varepsilon/2)\in\mathbb N$ be given by this property and take $\underline{g}\in G^*_{m+n+T(\varepsilon/2)}$. We observe that there exist $\underline a\in G_n$, $\underline b\in G^*_{T(\varepsilon/2)}$ and $\underline c\in G_m$ such that $\underline g=\underline a \;\underline b\;\underline c.$ Let $\mbox{Fix}(\underline{c})=\{P_1,\dots,P_r\}$ and  $\mbox{Fix}(\underline{a})=\{Q_1,\dots,Q_s\}$ be the sets of fixed points of $\underline c$ and $\underline{a}$, respectively. By the periodic specification property, for the semigroup elements $\underline{c},\underline{a}$ and the points $P_i\in\mbox{Fix}(\underline{c})$ and $Q_j\in\mbox{Fix}(\underline{a})$ there exists $x_{ij}\in \mbox{Fix}(\underline a\;\underline b\;\underline c)$ such that
$$
d(\underline c_\ell(x_{ij}),\underline c_{\ell}(P_i))< \frac{\varepsilon}{2} \quad \text{and} \quad
d(\underline a_u\;\underline b\;\underline c\,(x_{ij}),\underline a_{u}(Q_j))< \frac{\varepsilon}{2}
$$
for every $\ell=0,\dots,m$ and every $u=0,\dots, n.$ As the set $ \mbox{Fix}(\underline{c})$ is
$(\underline{c},m,\varepsilon)$-separated, we have $x_{i_1j_1}\not=x_{i_2j_2}$ for $(i_1,j_1)\not=(i_2,j_2)$.
This implies that $\sharp\,\mbox{Fix}(\underline{g})\geq \sharp\,\mbox{Fix}(\underline{a})\,\sharp\,\mbox{Fix}(\underline{c})$
and so,
$$
\sum_{|\underline g|=m+n+T(\varepsilon/2)}\sharp\,\mbox{Fix}(\underline{g})
        \geq\sum_{|\underline c|=m,|\underline a|=n}\sharp\,\mbox{Fix}(\underline{c})\,\sharp\,\mbox{Fix}(\underline{a})
=\left(\sum_{|\underline c|=m}\sharp\,\mbox{Fix}(\underline{c})\right)\left(\sum_{|\underline a|=m}\sharp \,\mbox{Fix}(\underline{a})\right).
$$
This yields
\begin{align}\label{eq:subadditivity}
         \frac{1}{p^{m+n+T(\varepsilon/2)}}\sum_{|\underline g|=m+n+T(\varepsilon/2)}\sharp \,\mbox{Fix}(\underline{g})
\geq \frac{1}{p^{T(\varepsilon/2)}}\left(\frac{1}{p^m}\sum_{|\underline c|=m}\sharp \,\mbox{Fix}(\underline{c})\right)
\left(\frac{1}{p^n}\sum_{|\underline a|=n}\sharp \,\mbox{Fix}(\underline{a})\right).
  \end{align}
If we denote by $a_n$ the value $\log\left(\frac{1}{p^n}\sum_{|\underline a|=n}\sharp \ ,\mbox{Fix}(\underline{a})\right)$, the inequality \eqref{eq:subadditivity} implies that $a_{m+n+T(\varepsilon/2)}\geq a_n+a_m$ for all $m, n\ge 1$.
As $T(\varepsilon/2)$ is a fixed constant, by a simple adaptation of the proof of Fekete's Lemma (\cite[Theorem 4.9]{Wa}), it follows that the sequence $(\frac{a_n}{n})_{n\in\mathbb N}$ converges to its supremum. Therefore the $\limsup$ in the definition of $\wp(S)$ may be replaced by a limit.
To complete the proof of the Theorem~\ref{thm:A} we are left to show that $\wp(S) = \log sp\,(\mathbf{L}_{n,0})_{n}.$
By Lemma~\ref{le:R-expanding}, each $\underline g\in G$ is a Ruelle-expanding map. Hence, $\sharp \,\text{Fix}(\underline g) = \mbox{deg}\,(\underline g)$ and then, taking the observable $\varphi\equiv 0$, the equality
$\wp(S) = \log sp((\mathbf{L}_{n,0})_{n})$
holds trivially by \eqref{eqL0}.

\section{The zeta function of the semigroup action}\label{sec:zeta}

Let $G$ be a semigroup generated by a finite set $G_1$ and $S: G \times M \to M$ the corresponding continuous semigroup action on a compact connected metric space $M$.

\begin{definition}\label{def:zeta-function}
The \emph{zeta function} associated to the continuous semigroup action $S: G \times M \to M$ is the formal power series
\begin{align}\label{de.number_periodic_points}
z \in \mathbb C \; \mapsto \; \zeta_S(z) =\exp \left(\sum_{n=1}^\infty \,\frac{N_n(G)}{n}\, z^n \right),
	\quad \text{where} \quad N_n(G)=\frac{1}{p^n}\sum_{|\underline g|=n}\, \sharp \, \text{Fix}(\underline g).
\end{align}
\end{definition}

We observe that this notion is a priori different from the one introduced by Artin-Mazur in \cite{AM}. In the particular case of $G$ being generated by $G_1^*=\{f \}$, then $G_n^* \subseteq \{f, f^2, \dots, f^n\}$ and we get
$$N_n(G)=\sum_{\{j \colon f^j \in G_n^*\}} \sharp \,\text{Fix}(f^j)
\quad \text{and}\quad
\zeta_S(z)=\exp\left(\sum_{n=1}^\infty \,\frac{\sum_{\{j \colon f^j \in G_n^*\}} \sharp \,\text{Fix}(f^j)}{n} \,z^n\right)
$$
while the dynamical Artin-Mazur's zeta function computes
$$\zeta_f(z)=\exp\left(\sum_{n=1}^\infty \,\frac{\sharp \,\text{Fix}(f^n)}{n} \,z^n\right).$$
These are different for instance if $f$ is a finite order element in $G$ as a rational rotation on $S^1$.
However, in the case of a topologically mixing Ruelle-expanding map $f:M \to M$, we have $G_n^*=\{f^n\}$ and, therefore, the zeta function $\zeta_S$ coincides with $\zeta_f$. As mentioned before (cf. Subsection~\ref{zeta function}), in this context, the periodic points have a definite exponential growth
$$\lim_{n\to\infty} \frac1n \log \,\sharp\, \text{Fix}(f^n)=h_{\text{top}}(f)=-\log(\rho_f)$$
where $\rho_f$ is the radius of convergence of the zeta function $\zeta_f$. We refer the reader to \cite{Bz02} for an account on random zeta functions.

\subsection{Proof of Theorem~\ref{thm:B}}

The function $\zeta_S$ we associate to the semigroup $G$ generated by a finite set $G_1$, with $G_1^*$ a finite set of Ruelle-expanding maps, is linked to the notion of annealed zeta function introduced in \cite{Baladi} in the context of random families of $C^r$ expanding maps, $r > 1$.
The aim of this section is to show that, when we consider Ruelle-expanding maps and the random walk $R_{\underline p}$, the annealed zeta function is rational and its radius of convergence is $\exp\,(-h_{\text{top}}(S))$.

We will start estimating the radius $\rho_S$ of convergence of $\zeta_S$ and relating it with $h_{\text{top}}(S)$. We first notice that, as $\lim_{n\to\infty}\,\sqrt[n]{n}=1$, then
\begin{equation}\label{def:radiuszeta}
\frac{1}{\rho_S} = {\limsup_{n\to\infty}\sqrt[n]{\frac{N_n(G)}{n}}}
	 = {\limsup_{n\to\infty}\sqrt[n]{\frac{1}{p^n}\sum_{|\underline{g}|=n}\,\sharp \,\text{Fix}(\underline g)}}.
\end{equation}
On the other hand,
$$\limsup_{n\to\infty}\sqrt[n]{\frac{1}{p^n}\sum_{|\underline g|=n}\,\sharp \,\text{Fix}(\underline g)}=\limsup_{n\to\infty}\sqrt[n]{\max \{ N_n(G),1\}}=\exp (\wp(S)).$$
Consequently, $\rho_S=\exp (-\wp(S))$. Thus, whenever $\wp(S)>0$, the zeta function $\zeta_S$ has a positive radius of convergence, meaning it is well defined in $\{ z\in \mathbb C \colon |z|<\exp (-\wp(S))\}$. Under the assumptions of Theorem~\ref{thm:A}, one also has
$\rho_S=\exp(-h_{\text{top}}(S)).$

We are left to prove that the zeta function of a semigroup of Ruelle-expanding maps is rational. We start showing that, under the assumptions of Theorem~\ref{thm:B}, the skew-product
\begin{equation}\label{de.skew-product}
\begin{array}{rccc}
\mathcal{F}_G : & \Sigma_p^+  \times M & \to & \Sigma_p^+  \times M \\
	& (\omega,x) & \mapsto & (\sigma(\omega), g_{\omega_1}(x))
\end{array}
\end{equation}
where $\omega=(\omega_1,\omega_2, \dots)$, has the following properties.

\begin{lemma}\label{le.skew expanding mixing}
The map $\mathcal{F}_G$ is Ruelle-expanding and topologically mixing.
\end{lemma}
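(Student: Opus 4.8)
The plan is to equip $\Sigma_p^+\times M$ with the product metric
$$
D\big((\omega,x),(\omega',x')\big)=\max\{\,d_\sigma(\omega,\omega'),\,d(x,x')\,\},
$$
where $d_\sigma(\omega,\omega')=2^{-k}$ and $k$ is the length of the longest common prefix of $\omega$ and $\omega'$, and then to check the three requirements of Definition~\ref{de.Ruelle-expanding} one by one, exploiting that each of them splits into a \emph{base} part, governed by the full shift $\sigma$, and a \emph{fibre} part, governed by the generators $g_i$. I would use the standard facts that $\sigma$ is itself Ruelle-expanding (being the topologically mixing subshift of finite type attached to the all-ones matrix, cf.\ the Examples of Section~\ref{sec:Ruelle-expanding}), that $d_\sigma(\sigma\omega,\sigma\omega')=2\,d_\sigma(\omega,\omega')$ whenever $\omega_1=\omega_1'$, and that the inverse branch $\omega\mapsto j\omega$ of $\sigma$ contracts $d_\sigma$ by the factor $\tfrac12$. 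Since $G_1^*$ is finite, I would fix $\lambda_0=\min_i\lambda_i>1$, $\delta_0=\min_i\delta_i>0$, $c_0=\min_i c_i>0$, $r_0=\min_i r_i>0$ and $\rho_0=\max_i\rho_i<1$ from the Ruelle-expanding data of the individual $g_i$.

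For local expansiveness I would take $\delta<\min\{\delta_0,1\}$, so that $D<\delta$ forces $\omega_1=\omega_1'$; writing $i=\omega_1$, the base expands by $2$ and the fibre by at least $\lambda_0$, whence $D(\mathcal F_G(\omega,x),\mathcal F_G(\omega',x'))\ge\min\{2,\lambda_0\}\,D((\omega,x),(\omega',x'))$, so any $\lambda\in(1,\min\{2,\lambda_0\})$ works. For condition (1), if $\mathcal F_G(\omega,x)=\mathcal F_G(\omega',x')$ with $(\omega,x)\neq(\omega',x')$, then either $\omega_1\neq\omega_1'$, giving $d_\sigma(\omega,\omega')=1$, or $\omega=\omega'$ and $g_{\omega_1}(x)=g_{\omega_1}(x')$ with $x\neq x'$, giving $d(x,x')>c_0$; hence $D>c$ for any $c<\min\{1,c_0\}$. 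For condition (2) I would describe the preimages of $(\omega,x)$ as the pairs $(j\omega,a)$ with $j\in\{1,\dots,p\}$ and $a\in g_j^{-1}(x)$, and build the inverse branch $\varphi(\omega',x')=(j\omega',\varphi_j(x'))$ on $B_{r_0}((\omega,x))$, where $\varphi_j$ is the contractive branch of $g_j$ at $x$ sending $x$ to $a$; this $\varphi$ satisfies $\mathcal F_G\circ\varphi=\mathrm{id}$, $\varphi(\omega,x)=(j\omega,a)$, and contracts $D$ by $\rho=\max\{\tfrac12,\rho_0\}<1$. These three verifications give that $\mathcal F_G$ is Ruelle-expanding.

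For topological mixing I would first invoke the connectedness of $M$: by the spectral decomposition recalled in Section~\ref{sec:Ruelle-expanding}, each $g_i$ is topologically mixing on $M$, hence covering and surjective, so for every nonempty open $W\subseteq M$ there is $N_W$ with $g_1^{\,m}(W)=M$ for all $m\ge N_W$. It suffices to treat basic open sets $U=[a_1\dots a_\ell]\times B$ and $V=[b_1\dots b_{\ell'}]\times B'$, with cylinders in the first factor and $B,B'\subseteq M$ nonempty open. Using $\mathcal F_G^{\,n}(\omega,x)=(\sigma^n\omega,\,g_{\omega_n}\cdots g_{\omega_1}(x))$, for large $n$ I would pick $\omega$ with $\omega_1\dots\omega_\ell=a_1\dots a_\ell$, with $\omega_{\ell+1}=\dots=\omega_n=1$, and with $\omega_{n+1}\dots\omega_{n+\ell'}=b_1\dots b_{\ell'}$; then $\sigma^n\omega\in[b_1\dots b_{\ell'}]$, while the fibre map equals $g_1^{\,n-\ell}\circ g_{a_\ell}\cdots g_{a_1}$, which sends the nonempty open set $W_0=g_{a_\ell}\cdots g_{a_1}(B)$ onto $M\supseteq B'$ once $n-\ell\ge N_{W_0}$. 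Choosing $x\in B$ whose image lands in $B'$ yields $\mathcal F_G^{\,n}(U)\cap V\neq\emptyset$ for every $n\ge\ell+N_{W_0}$.

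The expanding and openness verifications are routine bookkeeping with the product metric. I expect the main obstacle to be the mixing step, where one must reconcile the rigid constraint the target cylinder imposes on the tail of $\omega$ with the need to drive the fibre into $B'$; the resolution is the freedom to fill the intermediate symbols $\omega_{\ell+1},\dots,\omega_n$ with a single generator and to invoke its covering property, which itself rests on the connectedness of $M$ (were $M$ disconnected, the fibre maps would only be mixing on their basic components and this argument would break down).
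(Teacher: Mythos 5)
Your proof is correct and follows essentially the same route as the paper: the product (max) metric on $\Sigma_p^+\times M$, a componentwise verification of the two conditions of Definition~\ref{de.Ruelle-expanding} (separation of distinct preimages via a constant like $\min\{1,c_0\}$, and inverse branches built as products of the shift's and the fibre maps' contractive branches), and topological mixing reduced to sets of the form cylinder~$\times$~open, using the covering property of expanding maps on the connected manifold $M$. The one place where you diverge is the mixing step: the paper pads the itinerary by repeating the block $a_1\dots a_k$ and invokes topological mixing of the composite $g_{a_k}\cdots g_{a_1}$ (Ruelle-expanding by Lemma~\ref{le:R-expanding}), thereby obtaining the stronger covering statement $\mathcal F_G^\ell(U\times V)=\Sigma_p^+\times M$; you instead pad with the single generator $g_1$ and use its covering property to verify the standard two-open-sets formulation $\mathcal F_G^{\,n}(U)\cap V\neq\emptyset$. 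Both arguments rest on the same ingredient (property (C5) of the spectral decomposition on a connected space), and yours has the small advantage of not needing the composite map to be mixing, at the cost of proving only intersection rather than full covering. A minor remark in your favour: your choices $\rho_0=\max_i\rho_i$ and $\rho=\max\{\tfrac12,\rho_0\}$ are the correct ones for a uniform contraction bound; the corresponding $\min$'s in the paper's displayed constants are typos.
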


\begin{proof} Denote by $d_M$ and $d_{\sum}$ the metrics in $M$ and $\Sigma_p^+$, respectively. We are considering in $\Sigma_p^+  \times M$ the product topology, which is metrizable; its topology is given, for instance, by the metric
$D((\omega^0,x_0),(\omega^1,x_1))=\max\,\{d_M(x_0,x_1), d_{\sum}(\omega^0, \omega^1)\}.$
As $\sigma$ and each $g_i \in G_1^*$ are Ruelle-expanding, there exist positive constants $c_\sigma$ and $c_i$, for $i \in \{1,\dots,p\}$, such that
\begin{eqnarray*}
x,y \in M,\,\, x\neq y, \,\,g_i(x)=g_i(y) \quad &\Rightarrow& \quad d_M(x,y)>c_i \\
\omega^0, \omega^1 \in  \Sigma_p^+,\,\, \omega^0 \neq \omega^1, \,\,\sigma(\omega^0)=\sigma(\omega^1) \quad &\Rightarrow& \quad d_{\sum}(\omega^0, \omega^1)>c_\sigma. \\
\end{eqnarray*}
Let $(\omega^0,x_0)\neq (\omega^1,x_1)$ be such that $\mathcal{F}_G((\omega^0,x_0))= \mathcal{F}_G((\omega^1,x_1))$, that is, $\sigma(\omega^0)=\sigma(\omega^1)$ and $g_{\omega^0_1}(x_0)=g_{\omega^1_1}(x_1)$. Then, either $\omega^0 \neq \omega^1$, in which case we have
$D((\omega^0,x_0),(\omega^1,x_1))> c_\sigma;$
or else $\omega^0 = \omega^1$, and then
$D((\omega^0,x_0),(\omega^1,x_1)) > c_{\omega^0_1}.$
Therefore, if $c=\min\,\{c_\sigma, c_1, c_2, \dots,c_d\}$, then
$$(\omega^0,x_0)\neq (\omega^1,x_1), \,\,\mathcal{F}_G((\omega^0,x_0))= \mathcal{F}_G((\omega^1,x_1)) \quad \Rightarrow \quad D((\omega^0,x_0), (\omega^1,x_1)> c.$$

Let us now address the second property that characterizes Ruelle-expanding maps. Take $r$ and $\rho$ as in (\ref{eq:constants-Ru}), $r_\sigma$ and $\rho_\sigma$ the corresponding values for $\sigma$ due to its expanding nature (see Definition~\ref{de.Ruelle-expanding} and the first example in Subsection~\ref{ex.unilateral-shift}), and set $r_{\mathcal{F}_G}=\min\,\{r, r_\sigma\}$ and $\rho_{\mathcal{F}_G}=\min\,\{\rho, \rho_\sigma\}$. Then, given $(\omega, x) \in \Sigma_p^+ \times M$ and $(\beta, a)$ such that $\mathcal{F}_G((\beta, a))=(\omega, x)$, there exist maps $\varphi_{\omega_1}: B_{r}(x) \to M$ and $\varphi_{\sigma}: B_{r_\sigma}(\omega) \to \Sigma_p^+$ such that $\varphi_{\omega_1}(x) = a$, $\varphi_{\sigma}(\omega) = \beta$, $g_{\omega_1} \circ \varphi_{\omega_1} = id$, $\sigma \circ \varphi_{\sigma} = id$ and
\begin{eqnarray*}
d_M(\varphi_{\omega_1}(z), \varphi_{\omega_1}(w)) &\leq& \rho d_M(z,w), \quad \forall z,w \in  B_{r}(x) \\
d_{\sum}(\varphi_{\sigma}(s), \varphi_{\sigma}(t)) &\leq& \rho_\sigma d_\sigma(s,t), \quad \forall s,t \in  B_{r_\sigma}(\omega).
\end{eqnarray*}
Therefore, if $R>0$ is such that, for all $(\gamma,b) \in \Sigma_p^+ \times M$, we have $B_R((\gamma,b)) \subset B_{r_{\mathcal{F}_G}}(\gamma) \times B_{r_{\mathcal{F}_G}}(b)$, then the map $\varphi_{\sigma} \times \varphi_{\omega_1}: B_R((\omega,x)) \to \Sigma_p^+ \times M$ satisfies $\varphi_{\sigma} \times \varphi_{\omega_1}(\omega,x) = (\beta, a)$,
$\mathcal{F}_G \circ (\varphi_{\sigma} \times \varphi_{\omega_1}) = id$ and
\begin{eqnarray*}
D(\varphi_{\sigma} \times \varphi_{\omega_1}(s,z), \varphi_{\sigma} \times \varphi_{\omega_1}(t,w)) &=& \max\{d_M(\varphi_{\omega_1}(z), \varphi_{\omega_1}(w)), d_{\sum}(\varphi_{\sigma}(s), \varphi_{\sigma}(t))\}) \\
&\leq& \rho_{\mathcal{F}_G}\max\{d_M(z,w), d_{\sum}(s,t)\} \\
&=& \rho_{\mathcal{F}_G}D((s,z),(t,w)) \quad \quad \forall s,t \in  B_{r_{\mathcal{F}_G}}(\omega) \times B_{r_{\mathcal{F}_G}}(x).
\end{eqnarray*}
This ends the proof that $\mathcal{F}_G$ is Ruelle-expanding.

We now proceed showing that $\mathcal{F}_G$ is topologically mixing. Consider a non-empty open subset $\mathcal{W}$ of $\Sigma_p^+ \times M$ and take a cylinder $U=C(1; a_1\,a_2\,\dots,a_k)$ and an open set $V$ of $M$ such that $U\times V \subset \mathcal{W}$. As the maps $\sigma$ and $g_{a_k}\dots g_{a_1}$ are topologically mixing and Ruelle-expanding, there exist positive integers $m_U$ and $m_V$ such that $\sigma^{\ell}(U)=\Sigma_p^+$ and $(g_{a_k}\dots g_{a_1})^{\ell}=M$, for all $\ell \geq m=\max\,\{m_U, m_V\}$. Hence, for all $\ell \geq m$ we have
$$\mathcal{F}_G^\ell (U \times V)=\left(\sigma^\ell(U), \bigcup_{\omega \in U}\,f_{\omega}^\ell(V)\right)= \Sigma_p^+ \times M$$
since $V$ contains all the sequences of $\sum_{d}^+$ whose $k$ first entries are $a_1\,a_2\,\dots,a_k$, in particular those which start with this block repeated $\ell$ times for every $\ell \in \mathbb{N}$.
\end{proof}

\begin{corollary}\label{le.rational}
The Artin-Mazur zeta function of $\mathcal{F}_G$ is rational.
\end{corollary}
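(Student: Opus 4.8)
The plan is to deduce the rationality of the Artin–Mazur zeta function $\zeta_{\mathcal{F}_G}$ directly from the fact, established in Lemma~\ref{le.skew expanding mixing}, that $\mathcal{F}_G$ is a topologically mixing Ruelle-expanding map. Recall from Subsection~\ref{zeta function} that for any Ruelle-expanding map the periodic points of each period are isolated and finite in number, so the Artin–Mazur zeta function is well defined; moreover, it was asserted there that in this context the zeta function is rational and hence admits a meromorphic extension to $\mathbb{C}$. Thus the corollary is essentially an immediate application of that general statement to the particular map $\mathcal{F}_G$, whose Ruelle-expanding and topologically mixing nature is precisely what Lemma~\ref{le.skew expanding mixing} provides.

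To make the argument self-contained, I would first recall the mechanism behind rationality for Ruelle-expanding maps. The number $\sharp\,\text{Fix}(\mathcal{F}_G^n)$ of periodic points of period $n$ can be expressed, via the thermodynamic formalism of Theorem~\ref{te.Ruelle} applied to the potential $\varphi \equiv 0$, in terms of the trace of the $n$-th iterate of the transfer operator $\mathfrak{L}_0$ associated to $\mathcal{F}_G$, whose spectrum (by items~(1) and~(3) of Theorem~\ref{te.Ruelle}) consists of a simple leading eigenvalue together with the rest contained in a strictly smaller disk. The standard Ruelle-type trace formula then yields that $\sum_{n\ge 1} \sharp\,\text{Fix}(\mathcal{F}_G^n)\, z^n / n$ has an exponential that is a ratio of two entire functions (Fredholm determinants) which, thanks to the spectral gap and the finite multiplicity structure, reduce to polynomials; this is exactly what underlies the classical result cited in \cite{PP} and \cite{CM}.

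The cleanest route, however, is to invoke the cited rationality statement as a black box: since $\mathcal{F}_G : \Sigma_p^+ \times M \to \Sigma_p^+ \times M$ is Ruelle-expanding and topologically mixing by Lemma~\ref{le.skew expanding mixing}, the general theory recalled in Subsection~\ref{zeta function} guarantees that $\zeta_{\mathcal{F}_G}$ is rational. I would phrase the proof as a one- or two-sentence deduction: \emph{By Lemma~\ref{le.skew expanding mixing}, $\mathcal{F}_G$ is a topologically mixing Ruelle-expanding map; hence, by the properties of the dynamical zeta function of Ruelle-expanding maps recalled in Subsection~\ref{zeta function} (see also \cite{CM,PP}), the Artin–Mazur zeta function $\zeta_{\mathcal{F}_G}$ is rational.}

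I do not anticipate a serious obstacle here, since the heavy lifting has already been done in Lemma~\ref{le.skew expanding mixing}: the only content is verifying that $\Sigma_p^+ \times M$ is a compact metric space (which it is, being a product of compact metric spaces) so that the cited theory applies verbatim. The one subtlety worth a remark is that rationality of $\zeta_{\mathcal{F}_G}$ concerns the periodic points of the skew-product, which count periodic orbits of the semigroup action together with the periodic itineraries in $\Sigma_p^+$; the passage from this to the semigroup zeta function $\zeta_S$ of Theorem~\ref{thm:B} requires relating $\sharp\,\text{Fix}(\mathcal{F}_G^n)$ to $N_n(G)$, but that bookkeeping belongs to the proof of Theorem~\ref{thm:B} proper and not to this corollary.
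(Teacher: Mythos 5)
Your proposal is correct and follows exactly the paper's own argument: the paper's proof is the one-line deduction "After Lemma~\ref{le.skew expanding mixing}, we have just to summon Subsection~\ref{zeta function}," i.e.\ it invokes the rationality of the zeta function of a Ruelle-expanding map as a black box applied to $\mathcal{F}_G$. Your additional remarks on the trace-formula mechanism and on the bookkeeping relating $\sharp\,\text{Fix}(\mathcal{F}_G^n)$ to $N_n(G)$ are accurate but not needed for this corollary, as you yourself note.
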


\begin{proof} After Lemma~\ref{le.skew expanding mixing}, we have just to summon Subsection~\ref{zeta function}.
\end{proof}

Given $n \in \mathbb{N}$, let $\sharp\,\text{Per}_n(\mathcal{F}_G)$ be the number of periodic points with period $n$ of $\mathcal{F}_G$.

\begin{lemma}\label{le.periodicpoints_skew_semigroup}
$\sharp\,\text{Per}_n(\mathcal{F}_G)=p^n \times N_n(G).$
\end{lemma}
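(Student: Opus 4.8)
The plan is to count periodic points of the skew-product $\mathcal{F}_G$ directly by unwinding its definition, and to match them with fixed points of words in the semigroup. Recall that
$$
\mathcal{F}_G^n(\omega,x)=\big(\sigma^n(\omega),\, g_{\omega_n}\cdots g_{\omega_1}(x)\big),
$$
so a point $(\omega,x)$ is periodic of period $n$ for $\mathcal{F}_G$ precisely when $\sigma^n(\omega)=\omega$ and $g_{\omega_n}\cdots g_{\omega_1}(x)=x$. First I would record that $\sigma^n(\omega)=\omega$ forces $\omega$ to be one of the $p^n$ periodic sequences obtained by repeating a block $(\omega_1,\dots,\omega_n)\in\{1,\dots,p\}^n$ periodically. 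Thus the periodic points of $\mathcal{F}_G$ are organized by these $p^n$ blocks.

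The key step is then to observe that for each such block the condition on the second coordinate is exactly that $x\in\text{Fix}(\underline g)$, where $\underline g=g_{\omega_n}\cdots g_{\omega_1}$ is the word in $G^*_n$ read off from the block. Hence the fiber over a fixed periodic $\omega$ contributes exactly $\sharp\,\text{Fix}(\underline g)$ periodic points, and summing over the $p^n$ admissible blocks gives
$$
\sharp\,\text{Per}_n(\mathcal{F}_G)=\sum_{(\omega_1,\dots,\omega_n)}\sharp\,\text{Fix}(g_{\omega_n}\cdots g_{\omega_1})=\sum_{|\underline g|=n}\sharp\,\text{Fix}(\underline g).
$$
By the definition $N_n(G)=\frac{1}{p^n}\sum_{|\underline g|=n}\sharp\,\text{Fix}(\underline g)$, this immediately yields $\sharp\,\text{Per}_n(\mathcal{F}_G)=p^n\cdot N_n(G)$, as claimed.

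The one subtlety I would be careful about is \emph{double counting}. The sum over words indexes periodic orbits by the choice of block $(\omega_1,\dots,\omega_n)$ rather than by the resulting sequence $\omega\in\Sigma_p^+$, and this is harmless here because the definitions of both $N_n(G)$ and the transfer operator $\mathbf{L}_{n,0}$ in the excerpt also sum over words (concatenations) of length $n$, distinguishing different concatenations even when they give the same element of $G$. So the bookkeeping on the shift side is deliberately matched to the bookkeeping on the semigroup side, and no quotient by this redundancy is needed. I would also note, for completeness, that Lemma~\ref{le.skew expanding mixing} guarantees $\mathcal{F}_G$ is Ruelle-expanding, hence expansive, so its periodic points of each period are isolated and finite in number; this ensures that $\sharp\,\text{Per}_n(\mathcal{F}_G)$ is a well-defined finite count and that the identity is an equality of genuine cardinalities rather than a formal manipulation. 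The main obstacle, such as it is, is purely notational: keeping the correspondence between the block $(\omega_1,\dots,\omega_n)$ and the word $g_{\omega_n}\cdots g_{\omega_1}$ aligned with the ordering conventions fixed in Section~\ref{sec:setting}.
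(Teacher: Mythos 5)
Your proposal is correct and follows essentially the same argument as the paper: identify $\text{Per}_n(\mathcal{F}_G)$ with pairs $(\omega,x)$ where $\omega$ is one of the $p^n$ $\sigma$-periodic sequences of period $n$ and $x\in\text{Fix}(g_{\omega_n}\cdots g_{\omega_1})$, then sum over blocks. Your additional remarks on word-versus-element bookkeeping and finiteness of the counts are sensible but not needed beyond what the paper's one-line computation already records.
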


\begin{proof} We observe that $(\omega,x) \in \text{Per}_n(\mathcal{F}_G)$ is and only if $\sigma^n(\omega)=\omega$ and $f_\omega^n(x)=x$. That is, $\omega$ is a periodic point with period $n$ of $\sigma$, whose set has cardinal $p^n$, and $x\in \text{Fix}(f_\omega^n)$. Thus
$$
\sharp\,\text{Per}_n(\mathcal{F}_G)
	= \sum_{\sigma^n(\omega)=\omega} \sharp \text{Fix}(f_\omega^n)
	= \sum_{|\underline g|=n} \sharp \text{Fix}(\underline g)
	= p^n \times N_n(G).
$$
\end{proof}

\begin{corollary}
The zeta function of the semigroup is rational.
\end{corollary}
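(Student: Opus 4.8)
The goal is to prove that the semigroup zeta function $\zeta_S$ is rational, and the work has essentially already been assembled in the preceding lemmas; what remains is to connect them. The key observation is that the zeta function of the semigroup and the Artin-Mazur zeta function of the skew-product $\mathcal{F}_G$ count the same periodic data, up to the normalizing factor $p^n$. The plan is to exhibit $\zeta_S$ as a simple substitution into the (already rational) zeta function of $\mathcal{F}_G$ and conclude rationality by closure of rational functions under such substitutions.

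\textbf{Key steps.} First I would recall from Lemma~\ref{le.periodicpoints_skew_semigroup} the exact relation $\sharp\,\text{Per}_n(\mathcal{F}_G) = p^n\, N_n(G)$, equivalently $N_n(G) = p^{-n}\,\sharp\,\text{Per}_n(\mathcal{F}_G)$. Substituting this into Definition~\ref{def:zeta-function} gives
\begin{equation*}
\zeta_S(z) = \exp\left(\sum_{n=1}^\infty \frac{N_n(G)}{n}\, z^n\right)
	= \exp\left(\sum_{n=1}^\infty \frac{\sharp\,\text{Per}_n(\mathcal{F}_G)}{n}\,\left(\frac{z}{p}\right)^n\right)
	= \zeta_{\mathcal{F}_G}\!\left(\frac{z}{p}\right),
\end{equation*}
so $\zeta_S(z)$ is obtained from the Artin-Mazur zeta function $\zeta_{\mathcal{F}_G}$ by the linear change of variable $z \mapsto z/p$. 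Second, I would invoke Corollary~\ref{le.rational}, which establishes that $\zeta_{\mathcal{F}_G}$ is rational: this rests on Lemma~\ref{le.skew expanding mixing}, where $\mathcal{F}_G$ is shown to be Ruelle-expanding and topologically mixing, together with the rationality of the Artin-Mazur zeta function for such maps recalled in Subsection~\ref{zeta function}. Third, since the composition of a rational function with the affine map $z\mapsto z/p$ is again rational, I conclude that $\zeta_S$ is rational.

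\textbf{Main obstacle.} The genuinely substantive content has already been discharged in the earlier lemmas: verifying that the skew-product inherits the Ruelle-expanding and topologically mixing properties (Lemma~\ref{le.skew expanding mixing}) is where the real work lives, and the combinatorial count of fibered periodic points (Lemma~\ref{le.periodicpoints_skew_semigroup}) supplies the bridge. Consequently I expect no serious difficulty in the present corollary; the only point requiring a moment's care is ensuring that the identification of power series is legitimate, namely that both series have a positive radius of convergence so that equality of the formal series coincides with equality of the analytic functions on a common disk. This is guaranteed because $\wp(S) = h_{\text{top}}(S) < \infty$ by Theorem~\ref{thm:A}, so $N_n(G)$ grows at most exponentially and the series defining $\zeta_S$ converges on $\{|z| < e^{-\wp(S)}\}$; the analogous finiteness holds for $\zeta_{\mathcal{F}_G}$ since $\mathcal{F}_G$ is expansive with finitely many periodic points of each period.
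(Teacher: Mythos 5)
Your proposal is correct and follows exactly the paper's route: substitute $N_n(G)=p^{-n}\,\sharp\,\text{Per}_n(\mathcal{F}_G)$ from Lemma~\ref{le.periodicpoints_skew_semigroup} to identify $\zeta_S(z)=\zeta_{\mathcal{F}_G}(z/p)$, then invoke the rationality of $\zeta_{\mathcal{F}_G}$ from Corollary~\ref{le.rational}. The added remark on radii of convergence is a harmless extra precaution not present in the paper's own (purely formal) computation.
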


\begin{proof} Let $\zeta_{\mathcal{F}_G}$ be the Artin-Mazur zeta function of the skew-product $\mathcal{F}_G$. Given
$z\in \mathbb C$,
\begin{align*}
\zeta_S(z)  & = \exp \Big( \sum_{n=1}^{+\infty}\,\frac{N_n(G)}{n}\,z^n \Big)
	= \exp \Big(\sum_{n=1}^{+\infty}\,\frac{\sharp\,\text{Per}_n(\mathcal{F}_G)}{n \times p^{n}}\,z^n \Big) \\
	& = \exp \Big( \sum_{n=1}^{+\infty}\,\frac{\sharp\,\text{Per}_n(\mathcal{F}_G)}{n}\,\left(\frac{z}{p}\right)^n \Big)
	= \zeta_{\mathcal{F}_G}\left(\frac{z}{p}\right).
\end{align*}
\end{proof}

\section{Intrinsic objects \emph{vs} skew-product dynamics}\label{se.entropy}

In this section we will establish a bridge between topological Markov chains and semigroup actions in what concerns the relation between the notions of fibered and relative topological entropies and the concepts of annealed and quenched topological pressures.

\subsection{Thermodynamic formalism for the skew-product}\label{sec:skewp}

There have been several approaches to study the thermodynamic formalism of skew-product dynamics:
 (i) Ruelle expanding skew-product maps, (ii) fibered entropy for factor maps, (iii) quenched and annealed equilibrium states
 in random dynamical systems and (iv) relative measures for skew-products. We collect some of them here to be later compared with our results.

\subsubsection*{\emph{\textbf{Topological entropy of the Ruelle expanding skew-product}}}\label{skew-productR}

Since the skew product $\cF_G$ is a Ruelle-expanding map (cf. Lemma~\ref{le.skew expanding mixing}), for any H\"older continuous potential $\varphi$ on $\Sigma_p^+ \times M$ the map $\cF_G$ admits a unique equilibrium state
as described in Section~\ref{sec:Ruelle-expanding}. In particular, if $\varphi\equiv 0$, there is a unique measure $\mu_{\underline m}$ of maximal entropy of $\cF_G$, which is equally distributed along the $\sum_{i=1}^p \deg(g_i)$ elements  of the natural Markov partition $\mathcal Q$ on $\Sigma_p^+\times M$ and may be computed by the limit process described in
Section~\ref{sec:Ruelle-expanding}. From \cite{S00}, we also know that the projection of $\mu_{\underline m}$ in $\Sigma^+_p$ is $\eta_{\underline m}$, where
\begin{equation}\label{eq:m}
\underline m = \Big(\frac{\deg\,(g_1)}{\sum_{k=1}^p\,\deg\,(g_k)}, \frac{\deg\,(g_2)}{\sum_{k=1}^p\,\deg\,(g_k)}, \ldots,\frac{\deg\,(g_p)}{\sum_{k=1}^p\,\deg\,(g_k)}\Big).
\end{equation}
Moreover, the topological entropy of the skew-product $\mathcal{F}_G$
is given by
\begin{equation}\label{formula.Bufetov}
h_{\text{top}}(\mathcal{F}_G)= h_{\text{top}}(S) + \log p
\end{equation}
(cf. \cite{Bufetov}), so
$$
h_{\text{top}}(\mathcal{F}_G) = \log \Big(\sum_{i=1}^p \deg\,(g_i)\Big)
	\quad \text{and}\quad
h_{\text{top}}(S) = \log \Big(\frac{\sum_{i=1}^p \deg\,(g_i)}{p}\Big).
$$
Notice that the last equality for $h_{\text{top}}(S)$ depends only on the ingredients that set up the semigroup action
(in particular, it does not explicitly display the skew-product).
More generally (see \cite{Ru2}), if $\varphi$ is piecewise constant along the $\sum_{i=1}^p \deg(g_i)$ elements of Markov partition $\mathcal Q$,
then there exists a unique equilibrium state $\mu_\varphi$ for $\cF_G$ with respect to $\varphi$, it is a Bernoulli measure and satisfies
\begin{equation}\label{eq:classic}
P_{\text{top}}(\cF_G,\varphi)=\log\Big( \sum_{\substack{Q\in \mathcal Q}} e^{\varphi(Q)}\Big)
\end{equation}
and, for every $Q\in \mathcal Q$,
\begin{equation}\label{eq:classic}
	\mu_\varphi(Q)= \frac{e^{\varphi(Q)}}{ \sum_{\substack{Q\in \mathcal Q}} e^{\varphi(Q)}}.
\end{equation}

\subsubsection*{\emph{\textbf{Fibered entropy of the skew-product}}}\label{skew-product}

Following \cite{LW77}, consider the skew-product $\mathcal{F}_G$ and the projection on the first coordinate, say $\pi : \Sigma_p^+  \times M \to  \Sigma_p^+$, $\pi(\omega,x)=\omega$. We say that a subset $E$ of $\pi^{-1}(\omega)$ is $(n,\varepsilon)$-separated if
there exists $i \in \{0, \ldots, n\}$ such that
$d(g_{\omega_{i}}\dots g_{\omega_1}(x), g_{\omega_{i}}\dots g_{\omega_1}(y))\geq \varepsilon$
where $\underline{g}=g_{\omega_{n}}\dots g_{\omega_1} \in G$ and $|\underline{g}|=n$. Therefore, if $s(n,\varepsilon,\pi^{-1}(\omega))$ is the maximal cardinality of a $(n,\varepsilon)$-separated subset of $\pi^{-1}(\omega)$, then
$s(n,\varepsilon,\pi^{-1}(\omega))= s(\underline{g},n,\varepsilon).$
By \cite{LW77}, given a $\sigma$-invariant probability measure $\eta$ on $\Sigma_p^+$, then the map
$$
\omega \mapsto h_{\text{top}}(\mathcal{F}_G,\pi^{-1}(\omega)) :=\lim_{\varepsilon\to0}\limsup_{n\to\infty}\frac{1}{n}\log s(n,\varepsilon,\pi^{-1}(\omega))
$$
is measurable and
\begin{equation}\label{LW}
\sup_{\{\mu \colon {\mathcal{F}_G}_*(\mu)=\mu,\,\pi_*(\mu)\,=\,\eta \}}\,h_\mu(\mathcal{F}_G)= h_\eta(\sigma) + \int_{\Sigma_p^+}\,h_{\text{top}}(\mathcal{F}_G,\pi^{-1}(\omega))\,d\eta(\omega).
\end{equation}
We will refer to $h_{\text{top}}(\mathcal{F}_G,\pi^{-1}(\omega))$ as the \emph{relative entropy} on the fiber $\pi^{-1}(\omega)$; the \emph{fibered entropy} of the semigroup action $S$ with respect to $\eta$ is $\int_{\Sigma_p^+}\,h_{\text{top}}(\mathcal{F}_G,\pi^{-1}(\omega))\,d\eta(\omega)$.

\subsubsection*{\textbf{\emph{Quenched and annealed equilibrium states for the skew-product 
}}}\label{sec:random}

We now follow \cite{Baladi} to define quenched and annealed equilibrium states.
Given a continuous potential $\varphi: \Sigma_p^+ \times M \to \mathbb R$ and a probability measure $\underline a$
on $\{1, \dots, p\}$, the \emph{annealed topological pressure} of $\cF_G$ with respect to $\varphi$ and $\underline a$
is defined as
\begin{align}\label{def:annealed.entropy}
P_{\text{top}}^{(a)}(\cF_G,\varphi, \underline a)
	& = \sup_{\{\mu \colon {\mathcal{F}_G}_*\mu=\mu\}}
		\Big\{ h_\mu(\cF_G) - h_{\pi_\mu}(\sigma)  + h^{\underline a}(\pi_\mu)
		+ \int \varphi(\omega,x) \, d\mu(\omega,x) \Big\} \nonumber
	\end{align}
where $\omega=(\omega_1,\omega_2,\ldots)$,  $\pi_\mu=\pi_*(\mu)$ is the marginal of $\mu$ in $\Sigma^+_p$ and the \emph{entropy per site} $h^{\underline a}(\pi_\mu)$ 
with respect to $\eta_{\underline a}$ is given by
$$h^{\eta_{\underline a}}(\pi_\mu) = - \int_{\Sigma_p^+} \log \psi_{\pi_\mu}(\omega) \; d\pi_\mu(\omega) =  \int_{\Sigma_p^+} -\psi_{\pi_\mu}(\omega) \log \psi_{\pi_\mu}(\omega) \; d{\underline a}(\omega_1) \,\,d\pi_\mu (\sigma (\omega))$$
if $d\pi_\mu(\omega_1,\omega_2, \dots) \ll d{\underline a}(\omega_1) \,\,d\pi_\mu(\omega_2,\omega_3, \dots)$ and $\psi_{\pi_\mu}:=\frac{d\pi_\mu}{d{\underline a} \,\,d\pi_\mu \circ \sigma}$
denotes the Radon-Nykodin derivative of $\pi_\mu$ with respect to $\underline a \times \pi_\mu \circ \sigma$;
and $h^{\underline a}(\pi_\mu) = -\infty$ otherwise. We recall from \cite[page 676]{Baladi} that
$$h^{\underline a}(\pi_\mu)=0 \quad \Leftrightarrow \quad \pi_\mu=\eta_{\underline a}.$$
According to \cite[Equation (2.28)]{Baladi}, the annealed pressure can also be evaluated by
\begin{align}
P_{\text{top}}^{(a)}(\cF_G,\varphi, \underline a)
		& = \sup_{\{\mu \colon {\mathcal{F}_G}_*\mu=\mu\}}
		\Big\{ h_\mu(\cF_G) + \int_{\Sigma^+_p \times M} \log \big( \, \underline a (\omega_1) e^{\varphi(\omega,x)} \big)\, d\mu(\omega,x) \Big\}.
\end{align}
The \emph{quenched topological pressure} of $\cF_G$ with respect to $\varphi$ and $\underline a$
is defined as
\begin{equation}\label{def:quenched.entropy}
P_{\text{top}}^{(q)}(\cF_G,\varphi, \underline a)
	= \sup_{\{\mu \colon {\mathcal{F}_G}_*\mu=\mu, \, \pi_\mu=\eta_{\underline a}\}}
		\Big\{ h_\mu(\cF_G) - h_{\eta_{\underline a}}(\sigma) + \int \varphi(\omega,x) \, d\mu(\omega,x) \Big\}.
\end{equation}
It follows from the definition that we always have $P_{\text{top}}^{(a)}(\cF_G,\varphi, \underline a) \ge P_{\text{top}}^{(q)}(\cF_G,\varphi, \underline a)$.

An $\cF_G$-invariant probability measure is said to be an \emph{annealed (resp. quenched) equilibrium state for $\cF_G$ with respect to $\varphi$ and $\underline a$} if it
attains the supremum in equation~\eqref{def:annealed.entropy} (resp. equation~\eqref{def:quenched.entropy}).
In the case of finitely generated semigroups of $C^2$ expanding maps, there exists a unique quenched and a unique annealed equilibrium state for every H\"older continuous observable $\varphi$ and every $\underline a$, and
they exhibit an exponential decay of correlations (cf. \cite{Baladi}). For instance, for a semigroup $G$ with generators $G_1=\{id, g_1, \dots, g_p\}$ where each $g_i$ is a $C^2$-smooth expanding map, consider the H\"older potential $\varphi(\omega,x)=- \log |\det Dg_{\omega_1}(x)|$. Its annealed equilibrium state $\mu^{(a)}_{\varphi,\underline a}$ was described in \cite[Proposition 2]{Baladi} and is also the quenched equilibrium state for this potential.

\begin{remark}\label{rem:relative topological pressure}
Given a continuous potential $\varphi : \Sigma_p^+\times M \to \mathbb R$ and a $\sigma$-invariant probability measure $\eta$ on $\Sigma_p^+$, the notion of relative pressure of $\varphi$ on the fiber $\pi^{-1}(\omega)$, denoted by $P_{\text{top}}(\mathcal{F}_G, \varphi, \pi^{-1}(\omega))$, was studied in \cite[Section 2]{LW77}. In this reference it was shown that the relative pressure satisfies the following relative variational principle:
\begin{align}\label{eq:relative.variational.principle}
\int_{\Sigma_p^+} P_{\text{top}}(\mathcal{F}_G, \varphi, \pi^{-1}(\omega)) \,d\eta(\omega)
	& = \sup_{\{\mu \colon {\mathcal{F}_G}_*(\mu)=\mu, \, \pi_*(\mu)=\eta\}} \, \Big\{ h_\mu(\cF_G) - h_{\eta}(\sigma) + \int \varphi \,d\mu \Big\}.
\end{align}
Hence, a quenched equilibrium state for $\cF_G$ with respect to $\varphi$ and $\underline a$ is also an $\cF_G$-invariant probability measure $\mu^{(q)}_{\varphi,\underline a}$ satisfying
\begin{align}\label{rem.quenched}
\int_{\Sigma_p^+} P_{\text{top}}(\mathcal{F}_G, \varphi, \pi^{-1}(\omega)) \,d\eta_{\underline a}(\omega)
	& =  h_{\mu^{(q)}_{\varphi,\underline a}}(\cF_G) - h_{\eta_{\underline a}}(\sigma) + \int \varphi \; d\mu^{(q)}_{\varphi,\underline a}.
\end{align}
\end{remark}

\begin{example} Let  $G$ be a semigroup with generators $G_1=\{id, g_1, \dots, g_p\}$, where each $g_i$ is a $C^2$-smooth expanding map. For the potential $\varphi\equiv 0$ we will analyze how the annealed and quenched pressures vary with $\underline a$.
When $\underline a= \underline p $, we obtain
\begin{align}\label{eq:annealed_top-entropyS}
P_{\text{top}}^{(a)}(\cF_G, 0 , \underline p) &= \sup_{\{\mu \colon {\mathcal{F}_G}_*\mu=\mu\}}
		\Big\{ h_\mu(\cF_G) + \int \log \big(\,\underline p (\omega_1)\big)\, d\mu(\omega,x) \Big\}\nonumber\\
& = \sup_{\{\mu \colon {\mathcal{F}_G}_*\mu=\mu\}} \Big\{ h_\mu(\cF_G) - \log p \Big\}
 = h_{\text{top}}(\cF_G) -\log p = h_{\text{top}}(S).
\end{align}
The corresponding annealed equilibrium state $\mu^{(a)}_{\underline p}$ is the measure of maximal entropy $\mu_{\underline m}$ of $\cF_G$.
Moreover, by \cite[Equation (2.28)]{Baladi}, for any non-trivial probability vector $\underline a$ and the corresponding annealed equilibrium state $\mu^{(a)}_{\underline a}$ we have
$$h^{\underline a}(\pi_{\mu^{(a)}_{\underline a}}) = h_{\pi_{\mu^{(a)}_{\underline a}}}(\sigma)  + \int_{\Sigma^+_p \times M} \log \big(\,\underline a (\omega_1) \big)\, d\mu^{(a)}_{\underline a}(\omega,x).$$
So, when $\underline a=\underline p=(\frac1p, \dots, \frac1p)$,
\begin{equation}\label{eq:entropy-per-site-p}
h^{\underline p}(\pi_{\mu^{(a)}_{\underline p}}) = h_{\pi_{\mu^{(a)}_{\underline p}}}(\sigma)  + \int_{\Sigma^+_p \times M} \log \big(\,\underline p (\omega_1) \big)\, d\mu^{(a)}_{\underline p}(\omega,x) = h_{\eta_{\underline m}}(\sigma)  - \log p = h_{\eta_{\underline m}}(\sigma)  - h_{\eta_{\underline p}}(\sigma).
\end{equation}
Concerning the quenched operator, we get
\begin{align*}
& P_{\text{top}}^{(q)}(\cF_G, 0, \underline p)
	= \sup_{\{\mu \colon {\mathcal{F}_G}_*\mu=\mu, \, \pi_\mu=\eta_{\underline p}\}}  \; 		
		\Big\{ h_\mu(\cF_G) \Big\} - \log p.
\end{align*}
Yet, as $\pi(\mu_{\underline m})=\eta_{\underline m}$ and ${\underline m}\neq {\underline p}$ (cf. \cite{S00}), the quenched equilibrium state differs from $\mu^{(a)}_{\underline p}$.
In general, for a non-trivial probability vector $\underline a$, we have
\begin{align}\label{eq:equalpress}
 P_{\text{top}}^{(a)}(\cF_G, 0 , \underline a)
 	& = \sup_{\{\mu \colon {\mathcal{F}_G}_*\mu=\mu\}}
		\Big\{ h_\mu(\cF_G) + \int_{\Sigma^+_p \times M} \log \big(\,\underline a (\omega_1)\big)\, d\mu(\omega,x) \Big\}\nonumber \\
	& = P_{\text{top}}(\cF_G, \varphi_{\underline a})
\end{align}
where $\varphi_{\underline a} : \Sigma_p^+ \times M \to \mathbb R$ is the locally constant potential given by
$$ \varphi_{\underline a}(\omega,x)= \log \underline a(\omega_0)$$
and the quenched pressure is given by
\begin{align*}
& P_{\text{top}}^{(q)}(\cF_G, 0, \underline a)
	= \sup_{\{\mu \colon {\mathcal{F}_G}_*\mu=\mu, \, \pi_\mu=\eta_{\underline a}\}}  \; 		
		\Big\{ h_\mu(\cF_G) \Big\} - h_{\eta_{\underline a}}(\sigma).
\end{align*}
Therefore, a quenched equilibrium state $\mu^{(q)}_{\underline a}$ for $\varphi \equiv 0$ and $\underline a$ satisfies
$\pi_*(\mu^{(q)}_{\underline a})=\eta_{\underline a}$ and
 \begin{equation}\label{eq:quenched-a}
 h_{\mu^{(q)}_{\underline a}}(\cF_G) = \sup_{\{\mu \colon {\mathcal{F}_G}_*\mu=\mu, \, \pi_\mu=\eta_{\underline a}\}}  \; 				 h_\mu(\cF_G).
 \end{equation}
In particular, when $\underline a =\underline m$, we conclude that
\begin{equation}\label{eq:quenched-annealed}
\mu^{(q)}_{\underline m}=\mu_{\underline m}= \mu^{(a)}_{\underline p}
	\quad\text{and}\quad
	 P_{\text{top}}^{(q)}(\cF_G, 0, \underline m) =  h_{\text{top}}(\cF_G) - h_{\eta_{\underline m}}(\sigma).
\end{equation}
\end{example}

\subsubsection*{\emph{\textbf{Relative measures for the skew-product}}}\label{subset:Sumi}

Another proposal to study skew-product dynamics and semigroup actions was explored in \cite{S00, SU09}.
In ~\cite[Theorem 1.3]{S00} it is shown that, if $\eta_{\underline a}$ is the Bernoulli probability measure on $\Sigma_p^+$ determined by the vector $\underline a=(a_1, a_2, \ldots,a_p)$, such that $a_k>0$ for all $k = 1, \dots, p$ and $\sum_{k=1}^{p}\,a_k = 1$, there exists a self-similar probability measure
$\mu_{\underline a}$ which is invariant under the skew-product $\mathcal{F}_G$, whose projection to the base space is $\pi_*(\mu_{\underline a})=\eta_{\underline a}$ and which satisfies
\begin{equation}\label{eq.sumi1}
h_{\mu_{\underline a}}(\mathcal{F}_G) = \sup_{\{\mu \colon {\mathcal{F}_G}_*(\mu)=\mu,\,\pi_*(\mu)\,=\,\eta_{\underline a}\}}\,h_\mu(\mathcal{F}_G)
\end{equation}
and
\begin{equation}\label{eq.sumi}
h_{\mu_{\underline a}}(\mathcal{F}_G) = h_{\eta_{\underline a}}(\sigma) + \sum_{k=1}^p\, a_k \,\log \deg\,(g_k).
\end{equation}
where $\deg\,(g_k)$ stands for the degree of the map $g_k$. In the particular case of $\underline a=\underline m$,
the measure $\mu_{\underline m}$ is the unique probability of maximal entropy of the skew-product $\mathcal{F}_G$ (cf. Subsection~\ref{skew-productR}), and a simple computation yields that
\begin{equation}\label{eq:topsumi}
h_{\text{top}}(\mathcal{F}_G) = \log\Big(\sum_{i=1}^p \deg \,(g_i)\Big).
\end{equation}

\subsection{Topological entropy of the semigroup action with respect to a random walk}

In what follows we compare the notions of entropy for the semigroup action 
with the previous notions of fibered, quenched, annealed and relative pressures.

\subsubsection*{\textbf{\emph{Fibered entropy for the symmetric random walk}}}\label{skew-product1}

We first study the fibered entropy in the case of a symmetric random walk, that is, when each semigroup generator receives the same weight $\frac{1}{p}$. Notice that, in the case of a Bernoulli measure $\eta=\eta_{\underline a}$, the
relations ~\eqref{def:quenched.entropy} and ~\eqref{LW} imply that the fibered entropy can be computed as a quenched pressure by
$$
\int_{\Sigma_p^+}\,h_{\text{top}}(\mathcal{F}_G,\pi^{-1}(\omega))\,d\eta_{\underline a}(\omega)=P_{\text{top}}^{(q)}(\cF_G,0, \underline a).
$$
If $\eta= \eta_{\underline p}$, the formula (\ref{LW}) becomes
\begin{equation}\label{LW2}
\sup_{\{\mu \colon {\mathcal{F}_G}_*(\mu)=\mu,\,\pi_*(\mu)\,=\,\eta_{\underline p}\}} \,h_\mu(\mathcal{F}_G) = \log p + \int_{\Sigma_p^+}\,h_{\text{top}}(\mathcal{F}_G,\pi^{-1}(\omega))\,d\eta_{\underline p}(\omega).
\end{equation}
Thus, taking into account that $h_{\text{top}}(\mathcal{F}_G) = \sup_\mu \,h_\mu(\mathcal{F}_G)$ (cf. \cite{G71}), we conclude that
\begin{equation}\label{Led-Wal}
\log p + \int_{\Sigma_p^+}\,h_{\text{top}}(\mathcal{F}_G,\pi^{-1}(\omega))\,d\eta_{\underline p}
	\leq h_{\text{top}}(\mathcal{F}_G)
	\leq \log p + \sup_{\omega \in \Sigma_p^+}\,h_{\text{top}}(\mathcal{F}_G,\pi^{-1}(\omega)).
\end{equation}
Besides, \eqref{formula.Bufetov} indicates that $h_{\text{top}}(\mathcal{F}_G)= h_{\text{top}}(S) + \log p$. Together with (\ref{Led-Wal}), this implies
$$ \int_{\Sigma_p^+}\,h_{\text{top}}(\mathcal{F}_G,\pi^{-1}(\omega))\,d\eta_{\underline p} \leq h_{\text{top}}(S) \leq  \sup_{\omega \in \Sigma_p^+}\,h_{\text{top}}(\mathcal{F}_G,\pi^{-1}(\omega)).$$
So, it makes sense to ask under what conditions we have $h_{\text{top}}(S)=\int_{\Sigma_p^+}\,h_{\text{top}}(\mathcal{F}_G,\pi^{-1}(\omega))\,d\eta_{\underline p}$.

\begin{proposition}\label{le:hfibered}
Let $G_1=\{id, g_1,\cdots,g_p\}$, $p \geq 2$, be a finite set of expanding maps in $End^2(M)$, $G$ be the semigroup generated by $G_1$ and $\mathcal{F}_G :  \Sigma_p^+  \times M  \to  \Sigma_p^+  \times M$ be the corresponding skew-product. Then the following conditions are equivalent:
\begin{enumerate}
\item $h_{\text{top}}(S)=\int_{\Sigma_p^+}\,h_{\text{top}}(\mathcal{F}_G,\pi^{-1}(\omega))\,d\eta_{\underline p}$.
\item $\eta_{\underline p}=\pi_*(\mu_{\underline m})$, where $\mu_{\underline m}$ is the unique maximal entropy measure for $\cF_G$.
\item The degrees of the maps $g_i$ are the same for all $1\le i \le p$.
\end{enumerate}
Moreover, if any of these conditions holds, then
$$h_{\text{top}}(S)=\int_{\Sigma_p^+}\, \log \deg \,(g_\omega) \,d\eta_{\underline p}(\omega).$$
\end{proposition}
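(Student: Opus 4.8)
The plan is to bring both sides of condition (1) into closed form and then recognize the resulting identity as the equality case of the arithmetic--geometric mean inequality. On one side, the computations following \eqref{formula.Bufetov} already give $h_{\text{top}}(S) = \log\big(\frac1p\sum_{i=1}^p \deg(g_i)\big)$, the logarithm of the arithmetic mean of the degrees. On the other side, I will show that the fibered entropy equals $\frac1p\sum_{i=1}^p \log\deg(g_i)$, the logarithm of the geometric mean. Once this is established, both the equivalences and the ``moreover'' formula follow quickly.

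First I would compute the fibered entropy. Applying Sumi's identities \eqref{eq.sumi1} and \eqref{eq.sumi} to the symmetric vector $\underline a=\underline p$, for which $h_{\eta_{\underline p}}(\sigma)=\log p$, gives
\[
\sup_{\{\mu\,\colon\,{\mathcal{F}_G}_*\mu=\mu,\;\pi_*\mu=\eta_{\underline p}\}} h_\mu(\mathcal{F}_G)
    = \log p + \frac1p\sum_{i=1}^p \log\deg(g_i).
\]
Comparing this with the Ledrappier--Walters formula \eqref{LW2} and cancelling $\log p$, I obtain
\[
\int_{\Sigma_p^+} h_{\text{top}}(\mathcal{F}_G,\pi^{-1}(\omega))\,d\eta_{\underline p}
    = \frac1p\sum_{i=1}^p \log\deg(g_i)
    = \int_{\Sigma_p^+} \log\deg(g_\omega)\,d\eta_{\underline p}(\omega).
\]
This identity holds unconditionally, so it already yields the final displayed formula as soon as condition (1) is in force.

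Next I would prove (1) $\Leftrightarrow$ (3). Condition (1) reads $\log\big(\frac1p\sum_i \deg(g_i)\big)=\frac1p\sum_i \log\deg(g_i)$, i.e. the arithmetic mean of the positive integers $\deg(g_i)$ equals their geometric mean. By the equality case of the AM--GM inequality this holds exactly when all the $\deg(g_i)$ coincide, which is (3). For (2) $\Leftrightarrow$ (3) I would invoke the identity $\pi_*(\mu_{\underline m})=\eta_{\underline m}$ recalled in Subsection~\ref{skew-productR} (from \cite{S00}), with $\underline m$ as in \eqref{eq:m}. Then (2) says $\eta_{\underline p}=\eta_{\underline m}$; since distinct probability vectors induce distinct Bernoulli measures, this is equivalent to $\underline p=\underline m$, i.e. $\frac{\deg(g_i)}{\sum_k \deg(g_k)}=\frac1p$ for every $i$, which is again (3).

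The argument itself is short; the main thing to secure is the closed form for the fibered entropy, which rests on the attainment of the supremum in \eqref{LW2} by Sumi's self-similar measure $\mu_{\underline p}$ together with the explicit value of its entropy in \eqref{eq.sumi}. Once that closed form is available, the whole proposition is governed by the single dichotomy ``arithmetic mean $=$ geometric mean $\iff$ all degrees equal'', and no further estimates are needed.
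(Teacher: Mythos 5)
Your proof is correct, but it reaches the equivalences by a different route than the paper. The paper first proves $(1)\Leftrightarrow(2)$ variationally: from \eqref{formula.Bufetov} and \eqref{LW2} it writes $h_{\text{top}}(S)=h_{\text{top}}(\mathcal{F}_G)-\log p\ge \sup_{\{\mu\,:\,\pi_*\mu=\eta_{\underline p}\}}h_\mu(\mathcal{F}_G)-\log p=\int h_{\text{top}}(\mathcal{F}_G,\pi^{-1}(\omega))\,d\eta_{\underline p}$, and observes that equality holds exactly when the unique maximal entropy measure $\mu_{\underline m}$ lies in the constrained set, i.e.\ $\pi_*(\mu_{\underline m})=\eta_{\underline p}$; then $(2)\Leftrightarrow(3)$ follows from \eqref{eq:m}, just as in your argument. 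You instead prove $(1)\Leftrightarrow(3)$ directly by putting both sides in closed form --- $h_{\text{top}}(S)=\log\big(\tfrac1p\sum_i\deg(g_i)\big)$ from the discussion after \eqref{formula.Bufetov}, and the fibered entropy $=\tfrac1p\sum_i\log\deg(g_i)$ from \eqref{eq.sumi1}, \eqref{eq.sumi} and \eqref{LW2} --- and recognizing the equality case of the AM--GM inequality. Your closed form for the fibered entropy is precisely the paper's later Corollary~\ref{cor:fibered-entropy-formula} specialized to $\underline a=\underline p$, derived from the same sources, so nothing circular is going on. The trade-off: the paper's variational argument is shorter and does not need explicit values (only uniqueness of the maximal entropy measure and the identification of its marginal), while your computation makes the quantitative content transparent, gives the ``moreover'' identity for the fibered entropy unconditionally (not just under the hypotheses), and reduces the whole proposition to the dichotomy ``arithmetic mean equals geometric mean iff all degrees coincide''. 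Both arguments rest on the same external inputs (Bufetov's formula, Ledrappier--Walters, Sumi's self-similar measures, and the formula \eqref{eq:m} for $\underline m$), and both implicitly use that the supremum in \eqref{LW2} is attained by Sumi's measure $\mu_{\underline p}$.
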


\begin{proof}
Recall that $\mathcal{F}_G$ is a topologically mixing Ruelle-expanding map (cf. Lemma~\ref{le.skew expanding mixing}). Hence, taking $\varphi\equiv 0$ in Theorem~\ref{te.Ruelle}, we deduce that $\cF_G$ has a unique maximal entropy measure $\mu_{\underline m}$. Furthermore, it follows from the variational principle for $\cF_G$ and the relations \eqref{LW2} and \eqref{formula.Bufetov} that
\begin{align*}
h_{\text{top}}(S)  = h_{\text{top}}(\mathcal{F}_G)-\log p
	& \ge  \sup_{\{\mu \colon {\mathcal{F}_G}_*(\mu)=\mu,\,\pi_*(\mu)\,=\,\eta_{\underline p}\}}\,[h_\mu(\mathcal{F}_G) - \log p]
	 \\ &=  \int_{\Sigma_p^+}\,h_{\text{top}}(\mathcal{F}_G,\pi^{-1}(\omega))\,d\eta_{\underline p}(\omega).
\end{align*}
Clearly the previous inequality becomes an equality if an only if $\pi_*(\mu_{\underline m})=\eta_{\underline p}$, which proves that (1) is equivalent to (2). The remaining of the proof relies on the property of $\mu_{\underline m}$ stated in \eqref{eq:m}. It implies that $h_{\text{top}}(S)=\int_{\Sigma_p^+}\,h_{\text{top}}(\mathcal{F}_G,\pi^{-1}(\omega))\,d\eta_{\underline p}$ if and only the degrees of the maps $g_i$ are the same for all $1\le i \le p$. This proves that (2) is equivalent to (3). Finally, as $\eta_{\underline p}$ is a Bernoulli measure, $h_{\eta_{\underline p}} (\sigma)=\log p$
and $h_{\mu_{\underline m}}(\cF_G)=h_{\text{top}}(\mathcal{F}_G)$, then
$$h_{\text{top}}(S) =  \frac1p \, \sum_{k=1}^p\, \log \deg\,(g_k) = \int_{\Sigma_p^+} \, \log \deg \,(g_{\omega_1}) \,d\eta_{\underline p}(\omega).$$
This completes the proof of the proposition.
\end{proof}

\subsubsection*{\textbf{\emph{Relative entropy for non-symmetric random walks}}}\label{symmetry}

The notion $h_{\text{top}}(S)$ of topological entropy of a semigroup action $S$ depends on our choice of the Bernoulli equally distributed probability measure $\eta_{\underline p}$ on $\Sigma_p^+$.
If, instead of $\eta_{\underline p}$, we take another $\sigma$-invariant Bernoulli probability measure $\eta$ on the Borel sets of $\Sigma_p^+$, then the measure $\eta$ portrays an asymmetric random walk on $G$ and it suggests a generalization of the concept of topological entropy of $S$.

\begin{definition}\label{de.relative-entropy}
Let $G$ be the semigroup generated by a set $G_1=\{id, g_1,\dots,g_p\}$, where each $g_i$ is a $C^2$ expanding map on a compact connected Riemannian manifold $M$, and let $S: G\times M \to M$ be the corresponding continuous semigroup action. Consider a probability measure $\eta$ on $\Sigma_p^+$. The \emph{relative topological entropy of the semigroup action $S$ with respect to $\eta$} is given by
$$h_{\text{top}}(S, \eta) =\lim_{\epsilon\to 0}\limsup_{n\to\infty}\frac1n\log
	\int_{\Sigma_p^+} \, s(g_{\omega_n} \dots g_{\omega_1}, n,\epsilon) \, d\eta(\omega)$$
where $s(g_{\omega_n} \dots g_{\omega_1}, n,\epsilon)$ denotes the maximum cardinality of a $(\underline g,n,\varepsilon)$-separated set (see Section \ref{sec:topol-entropy}) and $\omega=\omega_1 \omega_2 \ldots \omega_n \ldots$.
\end{definition}

The previous notion is well defined since the map $\omega \to s(g_{\omega_n} \dots g_{\omega_1}, n,\epsilon)$ is constant on $n$-cylinders (hence measurable), bounded by $e^{n\,\max_{i \in \{1,\ldots,p\}}\,\{h_{\text{top}}(g_i)\}}$ and $s(g_{\omega_n} \dots g_{\omega_1}, n,\epsilon)$ is monotonic in the variable $\epsilon$. For instance,
$h_{\text{top}}(S, \eta_{\underline p})= h_{\text{top}}(S).$
In view of Definition~\ref{de.relative-entropy}, $h_{\text{top}}(S, \eta)$ is also given by
$$h_{\text{top}}(S, \eta) =\limsup_{n\to\infty}\frac1n\log\Big(
	\sum_{|\underline g|=n} \iota_*(\eta)(\underline{g}) \, \mathcal{L}_{\underline g, 0}\, (\textbf{1})\, \Big).$$
Notice that this formula makes sense since $\underline{g} \mapsto \mathcal{L}_{\underline g, 0}\, (\textbf{1})$ is bounded, its values are away from $0$ and $\infty$ and $\iota_*(\eta)$ is a probability measure.

Following an argument analogous to the one used to prove \cite[Theorem 25]{RoVa1}, we obtain:

\begin{corollary}\label{cor:vep-generator-entropy}
Assume that the continuous action of $G$ on the compact metric space $M$ is strongly $\delta^*$-expansive and let $\eta\in \mathcal M(\Sigma_p^+)$ be a $\sigma$-invariant probability measure. Then, for every $0<\varepsilon<\delta^*$, we have
$$h_{\text{top}}(\eta,S)=
	\lim_{n\to\infty}\frac{1}{n}\log\int_{\Sigma_p^+}\,s(g_{\omega_n} \dots g_{\omega_1}, n,\epsilon)\,d\eta(\omega).$$
\end{corollary}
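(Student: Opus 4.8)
The plan is to mimic the proof of Lemma~\ref{le.varepsilon} (that is, \cite[Theorem~25]{RoVa1}): once the action is strongly $\delta^*$-expansive, I will show that the exponential growth rate of the integrated separation numbers
\[
F_\varepsilon(n):=\int_{\Sigma_p^+}s(g_{\omega_n}\dots g_{\omega_1},n,\varepsilon)\,d\eta(\omega)
\]
does not depend on the scale $\varepsilon\in(0,\delta^*)$. Indeed, by Definition~\ref{de.relative-entropy} we have $h_{\text{top}}(S,\eta)=\lim_{\varepsilon\to0}g(\varepsilon)$ with $g(\varepsilon):=\limsup_{n}\frac1n\log F_\varepsilon(n)$; since smaller scales can only enlarge separated sets, $\varepsilon\mapsto g(\varepsilon)$ is non-increasing, so $\lim_{\varepsilon\to0}g(\varepsilon)=\sup_{\varepsilon>0}g(\varepsilon)$. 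Hence it suffices to prove that $g$ is constant on $(0,\delta^*)$, for then this constant is the supremum and equals $h_{\text{top}}(S,\eta)$.

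Fix $0<\varepsilon'<\varepsilon<\delta^*$. The inequality $g(\varepsilon')\ge g(\varepsilon)$ is immediate: every $(\underline g,n,\varepsilon)$-separated set is $(\underline g,n,\varepsilon')$-separated, so $s(\underline g,n,\varepsilon')\ge s(\underline g,n,\varepsilon)$ pointwise in $\omega$ and $F_{\varepsilon'}(n)\ge F_\varepsilon(n)$. For the reverse inequality I use strong $\delta^*$-expansivity to promote $\varepsilon'$-separation to $\varepsilon$-separation at the cost of appending a bounded number of generators. Let $\kappa:=\kappa_{\varepsilon'}$ be the integer furnished by Definition~\ref{de.delta_expansive} for $\gamma=\varepsilon'$. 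If $x\neq y$ satisfy $d_{g_{\omega_n}\dots g_{\omega_1}}(x,y)>\varepsilon'$, pick $0\le j\le n$ with $d(\underline g_j(x),\underline g_j(y))>\varepsilon'$ and apply strong $\delta^*$-expansivity to the pair $\underline g_j(x),\underline g_j(y)$ and to the length-$\kappa$ word $g_{\omega_{j+\kappa}}\dots g_{\omega_{j+1}}$ (which is legitimate because strong expansivity holds for \emph{every} word of length at least $\kappa$, so the value of the appended symbols is irrelevant). This produces $0\le i\le\kappa$ with $d(\underline g_{j+i}(x),\underline g_{j+i}(y))>\delta^*>\varepsilon$, where $\underline g$ now stands for the extended word $g_{\omega_{n+\kappa}}\dots g_{\omega_1}$; as $j+i\le n+\kappa$, the pair $x,y$ is $(g_{\omega_{n+\kappa}}\dots g_{\omega_1},n+\kappa,\varepsilon)$-separated. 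Consequently
\[
s(g_{\omega_n}\dots g_{\omega_1},n,\varepsilon')\le s(g_{\omega_{n+\kappa}}\dots g_{\omega_1},n+\kappa,\varepsilon)
\]
pointwise, and integrating against $\eta$ gives $F_{\varepsilon'}(n)\le F_\varepsilon(n+\kappa)$. Since $\kappa$ is fixed, $g(\varepsilon')\le\limsup_{n}\frac1n\log F_\varepsilon(n+\kappa)=g(\varepsilon)$. Thus $g(\varepsilon')=g(\varepsilon)$, $g$ is constant on $(0,\delta^*)$, and $h_{\text{top}}(S,\eta)=\limsup_{n}\frac1n\log F_\varepsilon(n)$ for every $0<\varepsilon<\delta^*$.

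It remains to replace this limit superior by a genuine limit. Here I would argue as in the proof of Theorem~\ref{thm:A}: the periodic orbital specification property (Theorem~\ref{thm:RoVaI}) lets us glue a $(\underline c,m,2\varepsilon)$-separated set and a $(\underline a,n,2\varepsilon)$-separated set across a gap word of fixed length $T(\varepsilon/2)$ into a $(\underline a\,\underline b\,\underline c,m+n+T(\varepsilon/2),\varepsilon)$-separated set whose cardinality is the product of the two, yielding a super-multiplicativity estimate of the form $F_\varepsilon(m+n+T(\varepsilon/2))\ge c\,F_{2\varepsilon}(m)\,F_{2\varepsilon}(n)$. The factor-of-two mismatch is harmless, since the first part of the proof already shows that $F_\varepsilon$ and $F_{2\varepsilon}$ have the same exponential rate whenever $2\varepsilon<\delta^*$; a Fekete-type argument (as in \cite[Theorem~4.9]{Wa}) then forces $\frac1n\log F_\varepsilon(n)$ to converge to its supremum. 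The main obstacle lies precisely in turning the pointwise gluing into an inequality between the \emph{integrals} $F_\varepsilon$: the two separated sets live on the coordinate blocks $\{1,\dots,m\}$ and $\{m+T(\varepsilon/2)+1,\dots\}$ of $\Sigma_p^+$, and decoupling these blocks under the integral requires the product structure of the Bernoulli walks together with the $\sigma$-invariance of $\eta$ (to translate the second block back to $\{1,\dots,n\}$). For a general $\sigma$-invariant $\eta$ this decoupling is delicate, and what survives unconditionally is the $\varepsilon$-independence of the limit superior established above.
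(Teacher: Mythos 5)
Your scale-independence argument is correct and is exactly what the paper intends: its entire ``proof'' is the one-line remark that the statement follows by the argument of \cite[Theorem~25]{RoVa1} (Lemma~\ref{le.varepsilon} here), and that argument is precisely your promotion of $\varepsilon'$-separation to $\varepsilon$-separation by appending a word of length $\kappa_{\varepsilon'}$ supplied by strong $\delta^*$-expansivity. The step is sound: the pointwise inequality $s(g_{\omega_n}\dots g_{\omega_1},n,\varepsilon')\le s(g_{\omega_{n+\kappa}}\dots g_{\omega_1},n+\kappa,\varepsilon)$ involves the same $\omega$ on both sides, so integrating against $\eta$ needs no invariance, and together with the trivial monotonicity in $\varepsilon$ it identifies the common value of $g(\varepsilon)$ on $(0,\delta^*)$ with $h_{\text{top}}(S,\eta)=\lim_{\varepsilon\to0}g(\varepsilon)=\sup_{\varepsilon>0}g(\varepsilon)$.

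The one point where you and the stated corollary part ways is the $\lim$ versus $\limsup$, and here your diagnosis is accurate rather than a defect of your proof. Definition~\ref{de.relative-entropy}, Lemma~\ref{le.varepsilon}, and every subsequent use of this quantity in the paper are phrased with $\limsup$; the cited Theorem~25 of \cite{RoVa1} only yields the $\limsup$ statement, so the paper's own one-line proof does not deliver the $\lim$ either. As you observe, the specification-based supermultiplicativity that would force convergence requires decoupling the integral over the two coordinate blocks, which the paper itself only carries out for Bernoulli measures (see the proof of Lemma~\ref{continuous}, where the product structure of $\eta\in\mathcal M_B(\Sigma_p^+)$ is invoked explicitly, together with a sub/superadditivity argument). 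So for a general $\sigma$-invariant $\eta$ the corollary should be read with $\limsup$ (or the hypothesis restricted to Bernoulli $\eta$), and with that reading your proof is complete and matches the intended one. It would strengthen your write-up to state this conclusion affirmatively rather than leaving the last paragraph as an open worry.
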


\subsubsection*{\textbf{\emph{A variational principle for the relative entropy}}}
Let $G$ be the semigroup generated by a set $G_1=\{id, g_1,\dots,g_p\}$, where each $g_i$ is a $C^2$ expanding map on a compact connected Riemannian manifold $M$, and let $S: G\times M \to M$ be the corresponding continuous semigroup action. Note that the action of $G$ on the compact metric space $M$ is strongly $\delta^*$-expansive. Denote by $\mathcal{M}_B(\Sigma_p^+)$ the space of Bernoulli measures on $\Sigma_p^+$, that is, the probability measures $\eta=\eta_{\underline a}$ for some probability vector $\underline a=(a_1, \dots, a_p)$, where some of the entries may be zero. This space of $\sigma$-invariant measures, which encodes all random walks on the semigroup $G$ we have considered so far, is homeomorphic to the finite dimensional simplex $\{\underline a=(a_1, \dots, a_p) \in \mathbb R^p \colon a_i\ge 0 \,\text{and}\, \sum_{i=1}^p a_i=1\}$, and therefore it is a closed subset of $\mathcal{M}_\sigma(\Sigma_p^+)$. Let $\mathcal{H}$ be the entropy map with respect to the random walks in $\mathcal{M}_B(\Sigma_p^+)$, given by
\begin{equation}\label{def:entropy-map}
\begin{array}{rccc}
  \mathcal{H}: & \mathcal{M}_B(\Sigma_p^+) &\to & [0,+\infty]  \\
\nonumber  & \eta &\mapsto & h_{\text{top}}(S, \eta).
\end{array}
\end{equation}

\begin{lemma}\label{continuous} The map $\mathcal{H}$ is continuous.
\end{lemma}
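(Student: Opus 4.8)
The plan is to exhibit $\mathcal{H}(\eta_{\underline a})$ in closed form as an explicit elementary function of the probability vector $\underline a$, from which continuity is immediate. Since the space $\mathcal{M}_B(\Sigma_p^+)$ has already been identified homeomorphically with the simplex $\Delta=\{\underline a=(a_1,\dots,a_p)\in\RR^p\colon a_i\ge 0,\ \sum_{i=1}^p a_i=1\}$ via $\underline a\mapsto\eta_{\underline a}$, it suffices to prove that the map $\underline a\in\Delta\mapsto h_{\text{top}}(S,\eta_{\underline a})$ is continuous. I would start from the alternative expression for the relative entropy recorded just before the statement, namely
$$h_{\text{top}}(S,\eta)=\limsup_{n\to\infty}\frac1n\log\Big(\sum_{|\underline g|=n}\iota_*(\eta)(\underline g)\,\mathcal{L}_{\underline g,0}(\textbf{1})\Big),$$
and evaluate it for $\eta=\eta_{\underline a}$.

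The key inputs are that each composition $\underline g=g_{i_n}\dots g_{i_1}$ is Ruelle-expanding (Lemma~\ref{le:R-expanding}) and that $M$ is connected, so the number of preimages is constant in $x$: one has $\mathcal{L}_{\underline g,0}(\textbf{1})(x)=\sharp\,\underline g^{-1}(x)=\deg\,(\underline g)$ for every $x\in M$, and the degree is multiplicative along concatenations, $\deg\,(g_{i_n}\dots g_{i_1})=\prod_{j=1}^n\deg\,(g_{i_j})$. Because $\deg\,(\underline g)$ depends only on the semigroup element and not on the chosen word, the weighted sum coincides with an integral over $\Sigma_p^+$, and Bernoulli independence of the coordinates then gives
$$\sum_{|\underline g|=n}\iota_*(\eta_{\underline a})(\underline g)\,\mathcal{L}_{\underline g,0}(\textbf{1})=\int_{\Sigma_p^+}\deg\,(g_{\omega_n}\dots g_{\omega_1})\,d\eta_{\underline a}(\omega)=\Big(\sum_{i=1}^p a_i\,\deg\,(g_i)\Big)^{\!n}.$$
Consequently $\frac1n\log(\cdot)$ is constant in $n$, the $\limsup$ is in fact a limit, and
$$\mathcal{H}(\eta_{\underline a})=h_{\text{top}}(S,\eta_{\underline a})=\log\Big(\sum_{i=1}^p a_i\,\deg\,(g_i)\Big).$$

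Continuity then follows at once: the map $\underline a\mapsto\sum_{i=1}^p a_i\,\deg\,(g_i)$ is affine, hence continuous on $\Delta$, and since each $\deg\,(g_i)\ge 1$ together with $\sum_{i=1}^p a_i=1$ forces $\sum_{i=1}^p a_i\,\deg\,(g_i)\ge 1$, this quantity stays in the region $[1,+\infty)$ on which $\log$ is continuous and finite; composing the two maps yields continuity of $\mathcal{H}$, and incidentally shows that $\mathcal{H}$ is finite-valued and nonnegative. The computation is elementary once the closed form is secured, so I do not expect a serious obstacle; the only points requiring care are \emph{(i)} the reduction of the weighted sum over length-$n$ words to the integral $\int\deg\,(g_{\omega_n}\dots g_{\omega_1})\,d\eta_{\underline a}$ (legitimate precisely because the integrand depends only on the semigroup element), so that Bernoulli independence applies cleanly, and \emph{(ii)} checking that we remain bounded away from the singular value $0$ of $\log$, which is exactly what $\deg\,(g_i)\ge 1$ guarantees, even on the boundary faces of $\Delta$ where some $a_i$ vanish.
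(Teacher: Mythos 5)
Your proof is correct, but it takes a genuinely different route from the paper's. You derive a closed formula $\mathcal{H}(\eta_{\underline a})=\log\big(\sum_{i=1}^p a_i\deg\,(g_i)\big)$ by combining the preimage-counting expression for $h_{\text{top}}(S,\eta)$ stated just before the lemma with the two structural facts that make it computable: on a connected $M$ each concatenation $\underline g$ is Ruelle-expanding (Lemma~\ref{le:R-expanding}), so $\mathcal{L}_{\underline g,0}(\textbf{1})\equiv\deg\,(\underline g)$, and the degree is multiplicative along words, so Bernoulli independence factors the average into an $n$-th power. The paper instead proves the two semicontinuities separately: lower semicontinuity by writing $\mathcal{H}$ as a supremum of the continuous functions $\eta\mapsto\frac1n\log\int\sharp\,\text{Fix}(g_{\omega_n}\dots g_{\omega_1})\,d\eta$ (superadditivity coming from the periodic orbital specification property, as in the proof of Theorem~\ref{thm:A}), and upper semicontinuity by writing it as an infimum of continuous functions built from covering numbers (subadditivity of covers). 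Your approach is shorter and yields strictly more — the explicit value, the fact that the $\limsup$ is a genuine limit, affine dependence of $e^{\mathcal{H}}$ on $\underline a$, and, as an immediate byproduct, the formulas $\sup_{\eta}\mathcal{H}(\eta)=\log\max_i\deg\,(g_i)$ and $\mathcal{H}(\eta_{\underline p})=\log\big(\frac1p\sum_i\deg\,(g_i)\big)$ that the paper establishes separately in the subsequent proposition. The trade-offs to be aware of are: (i) your argument leans on the identity $h_{\text{top}}(S,\eta)=\limsup_n\frac1n\log\big(\sum_{|\underline g|=n}\iota_*(\eta)(\underline g)\,\mathcal{L}_{\underline g,0}(\textbf{1})\big)$, which the paper asserts but does not prove in detail (it requires comparing separated sets with preimage counts uniformly over words), whereas the paper's semicontinuity proof works directly from separated sets and covers; and (ii) the closed form is tied to $\varphi\equiv 0$, connectedness of $M$, and constancy of the preimage count, so the paper's sub/superadditivity scheme is the one that would survive in settings where no such formula exists. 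Within the hypotheses of this section, though, your argument is complete and correct.
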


\begin{proof}
To prove that $\mathcal H$ is lower semicontinuous, we go back to the proof of Theorem~\ref{thm:A} where it was established that, if $\varepsilon \in (0,\delta_*)$, $T(\varepsilon/2)\in\mathbb N$ is given by the periodic specification property and $\underline{g}=g_{\omega_{m+n+T(\varepsilon/2)}} \dots g_{\omega_{1}} \in G^*_{m+n+T(\varepsilon/2)}$, then there exist
\begin{eqnarray*}
\underline a &=& g_{\omega_{m+n+T(\varepsilon/2)}}\dots g_{\omega_{m+T(\varepsilon/2)}+1} \in G_n^*\\
\underline b &=& g_{\omega_{m+T(\varepsilon/2)}} \dots g_{\omega_{m+1}} \in G^*_{T(\varepsilon/2)}\\
\underline c &=& g_{\omega_{m}} \dots g_{\omega_{1}} \in G^*_m
\end{eqnarray*}
such that $\underline g=\underline a \;\underline b\;\underline c$ and
$\sharp\,\mbox{Fix}(\underline{g})\geq \sharp\,\mbox{Fix}(\underline{a})\,\sharp\,\mbox{Fix}(\underline{c})$, 
and so
$$
\int \sharp\,\mbox{Fix}(\underline{g}) \; d\eta
        \geq\int \sharp\,\mbox{Fix}(\underline{c}) \;d\eta\;
	\int \sharp \,\mbox{Fix}(\underline{a})\;d\eta.
$$
In particular, as we are restricting to $\mathcal{M}_B(\Sigma_p^+)$, we get
\begin{equation}\label{eq:fixp}
\mathcal H(\eta) =\sup_{n\ge 1}\, \frac1n \,\log \,\int \sharp \mbox{Fix}\,(g_{\omega_n}\dots g_{\omega_1}) \; d\eta(\omega)
\end{equation}
is the supremum of the continuous functions $\eta \mapsto \frac1n\, \log\, \int \sharp \mbox{Fix}\,(g_{\omega_n}\dots g_{\omega_1}) \; d\eta$,
hence lower semicontinuous.

We proceed by showing that $\mathcal H$ is also upper semicontinuous. This is due to the characterization of of the topological entropy via generating sets. 
Indeed, the same steps of the proof  of Theorem 25 of \cite{RoVa1} (where integration is considered there with respect to the equidistributed Bernouli measure $\eta_p$) imply that, as
$S$ is a strongly $\delta^*$-expansive continuous semigroup action, then the topological entropy $h_{\text{top}}(S,\eta)$ satisfies
$$h_{\text{top}}(S,\eta) = \limsup_{n\to\infty}\frac{1}{n}\log\int_{\Sigma_p^+}c(g_{\omega_n} \dots g_{\omega_1}, n,\epsilon)d\eta(\omega)$$
for every $0<\varepsilon<\delta^*$, where 
$$\displaystyle c(g_{\omega_n} \dots g_{\omega_1}, n,\epsilon)=\inf\{\sharp\, \mathcal U:\mathcal U \text{ is a }(g_{\omega_n} \dots g_{\omega_1},\varepsilon)-\text{cover}\}.$$
Take $\varepsilon>0$, $n\in\NN$ and $\underline{g}=g_{\omega_{n+m}}\dots g_{\omega_1}\in G$, and consider
$\underline{\ell}=g_{\omega_{n+m}}\dots g_{\omega_{n+1}}$ and $\underline{k}=g_{\omega_{n}}\dots g_{\omega_1}$.
Given a $(\underline{\ell},\varepsilon)$-cover $\mathcal U$ and a $(\underline{k},\varepsilon)$-cover  $\mathcal V$,
we have that $\mathcal W=\underline k^{-1}(\mathcal U)\vee \mathcal V$ is a $(\underline{g},\varepsilon)$-cover with
$\sharp\mathcal W\leq\sharp\mathcal U\ \sharp\mathcal V$. This implies that
$$
\displaystyle c(g_{\omega_{n+m}} \dots g_{\omega_1}, n+m,\epsilon)
	\le c(g_{\omega_{n+m}} \dots g_{\omega_{m+1}}, m,\epsilon) \; c(g_{\omega_{n}} \dots g_{\omega_1}, n,\epsilon).
$$
Since all measures $\eta \in \mathcal{M}_B(\Sigma_p^+)$ are Bernoulli, 
the previous inequality yields 
\begin{align*}
\int_{\Sigma_p^+}c(g_{\omega_{n+m}} \dots g_{\omega_n}, n,\epsilon)d\eta(\omega)
       &\leq \int_{\Sigma_p^+}c(g_{\omega_{n}} \dots g_{\omega_1}, n,\epsilon)d\eta(\omega)
		\; \int_{\Sigma_p^+}c(g_{\omega_{m}} \dots g_{\omega_1}, m,\epsilon)d\eta(\omega).
\end{align*}
Hence, the sequence $\left(a_n\right)_{n \in \mathbb{N}}=\left(\log\int_{\Sigma_p^+}c(g_{\omega_{n}} \dots g_{\omega_1}, n,\epsilon)d\eta(\omega)\right)_{n \in \mathbb{N}}$ is subadditive
and so $\lim_{n\to\infty}\,\frac{a_n}{n} = \inf_{n\to\infty}\,\frac{a_n}{n}$. In other words,
$$
\mathcal H(\eta) =\inf_{n\ge 1} \,\frac1n\, \log \,\int c(g_{\omega_n}\dots g_{\omega_1}) \; d\eta(\omega)
$$
is the infimum of the continuous functions $\eta \mapsto \frac1n\, \log\, \int c(g_{\omega_n}\dots g_{\omega_1}) \; d\eta(\omega)$. Therefore, $\mathcal H$ is upper semicontinuous.
\end{proof}

\begin{proposition} There exists $\eta_0\in\mathcal{M}_B(\Sigma_p^+)$ such that
$$\sup_{\eta\in\mathcal{M}_B(\Sigma_p^+)}h_{\text{top}}(S, \eta)=h_{\text{top}}(S, \eta_0).$$
Moreover,
$$\sup_{\eta\in\mathcal{M}_B(\Sigma_p^+)}h_{\text{top}}(S, \eta) =\log \Big(\max_{1\le i \le p} \deg(g_i)\Big). $$ 
And $\sup_{\eta\in\mathcal{M}_B(\Sigma_p^+)}h_{\text{top}}(S, \eta)=h_{\text{top}}(S, \eta_p)$ if and only if
$\deg(g_i)=d$ for every $1\le i\le p$.
\end{proposition}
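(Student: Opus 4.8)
The plan is to reduce all three assertions to one explicit computation of $h_{\text{top}}(S,\eta_{\underline a})$ for an arbitrary Bernoulli measure $\eta_{\underline a}$, and then to optimize a linear functional over the probability simplex. First I would invoke the characterization \eqref{eq:fixp}, valid for every $\eta\in\mathcal{M}_B(\Sigma_p^+)$ since the action is strongly $\delta^*$-expansive:
$$h_{\text{top}}(S,\eta_{\underline a}) = \sup_{n\ge 1}\frac1n\log\int \sharp\,\text{Fix}(g_{\omega_n}\dots g_{\omega_1})\,d\eta_{\underline a}(\omega).$$
By Lemma~\ref{le:R-expanding} every concatenation $\underline g$ is Ruelle-expanding, so $\sharp\,\text{Fix}(g_{\omega_n}\dots g_{\omega_1})=\deg(g_{\omega_n}\dots g_{\omega_1})=\prod_{j=1}^n \deg(g_{\omega_j})$, using the multiplicativity of the topological degree under composition of $C^2$ expanding maps. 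The decisive step is then to exploit that $\eta_{\underline a}=\underline a^{\mathbb N}$ is a product measure and that the integrand depends only on the first $n$ coordinates, so the integral factorizes as $\big(\sum_{i=1}^p a_i\deg(g_i)\big)^n$. Hence the $n$-th term equals the constant $\log\big(\sum_{i=1}^p a_i\deg(g_i)\big)$ for every $n$, and the supremum collapses to
$$h_{\text{top}}(S,\eta_{\underline a}) = \log\Big(\sum_{i=1}^p a_i\,\deg(g_i)\Big).$$

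With this closed form in hand the remaining statements are elementary. For the supremum I would note that $\sum_i a_i\deg(g_i)$ is a convex combination of the numbers $\deg(g_1),\dots,\deg(g_p)$, hence bounded above by $\max_{1\le i\le p}\deg(g_i)$, with equality exactly when $\underline a$ is supported on indices of maximal degree. Taking $\eta_0=\eta_{\underline a}$ with $\underline a$ the indicator of one such index gives simultaneously the existence of a maximizer and the value $\sup_\eta h_{\text{top}}(S,\eta)=\log\left(\max_{1\le i\le p}\deg(g_i)\right)$; existence also follows abstractly from Lemma~\ref{continuous} and the compactness of $\mathcal{M}_B(\Sigma_p^+)$, which is homeomorphic to the simplex. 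Finally, specializing to $\underline a=\underline p=(1/p,\dots,1/p)$ yields $h_{\text{top}}(S,\eta_{\underline p})=\log\left(\frac1p\sum_{i=1}^p\deg(g_i)\right)$, and this attains the maximum $\log(\max_i\deg(g_i))$ precisely when the arithmetic mean of the degrees equals their maximum, that is, if and only if $\deg(g_i)=d$ for all $i$.

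The only genuinely substantive step is the factorization of the integral, and there is no real obstacle here: the Bernoulli (product) structure of $\eta_{\underline a}$ combined with the multiplicativity of the degree collapses the $\limsup$ in the definition of $h_{\text{top}}(S,\eta_{\underline a})$ into an $n$-independent constant, after which the statement becomes the trivial maximization of a linear functional over a simplex.
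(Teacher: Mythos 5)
Your proof is correct, but it takes a genuinely different and more direct route than the paper's. The paper establishes the existence of a maximizer abstractly, from the compactness of $\mathcal{M}_B(\Sigma_p^+)$ together with the continuity of $\mathcal H$ (Lemma~\ref{continuous}), and then computes the supremum by a two-sided estimate: the lower bound comes from the Dirac random walk $\eta_{\underline a}=\delta_{jjj\dots}$ concentrated on an index $j$ of maximal degree, for which $h_{\text{top}}(S,\eta_{\underline a})=h_{\text{top}}(g_j)=\log\deg(g_j)$, and the upper bound is proved by contradiction, using \eqref{eq:fixp} to produce, for a hypothetical measure exceeding $\log(\max_i\deg(g_i))$, words $g_{\omega_n}\dots g_{\omega_1}$ with more fixed points than their degree permits. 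You instead derive the closed formula $h_{\text{top}}(S,\eta_{\underline a})=\log\big(\sum_i a_i\deg(g_i)\big)$ for \emph{every} Bernoulli measure, by combining \eqref{eq:fixp}, the multiplicativity of the degree along concatenations (Lemma~\ref{le:R-expanding}), and the factorization of the integral over the product structure of $\eta_{\underline a}$; the three assertions then reduce to maximizing a linear functional over the simplex. Your formula is consistent with the rest of the paper (it specializes to $\log(\frac1p\sum_i\deg(g_i))$ for $\underline a=\underline p$ and agrees with the annealed pressure identity \eqref{eq:entropy-annealedpressure}), and it buys more than the paper's argument does: it exhibits an explicit maximizer and identifies the full set of maximizing vectors at once, a fact the paper only records in the remark following the proposition.

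One small caveat. The identity $\sharp\,\text{Fix}(\underline g)=\deg(\underline g)$ is exact only up to uniformly bounded corrections: for $g(z)=z^2$ on $\mathbb{S}^1$ there is one fixed point but the degree is two, and Example~\ref{ex.2and3} counts $2^k3^{n-k}-1$ fixed points for a word of degree $2^k3^{n-k}$. Consequently the integral equals $\big(\sum_i a_i\deg(g_i)\big)^n$ only up to such corrections, and the $n$-th term in \eqref{eq:fixp} is not literally constant in $n$ as you assert. This does not affect your conclusion, since the exponential growth rate, and hence the supremum in \eqref{eq:fixp}, is still $\log\big(\sum_i a_i\deg(g_i)\big)$, and the paper adopts the same convention in the proof of Theorem~\ref{thm:A}; but the honest statement is a two-sided comparability $C^{-1}\deg(\underline g)\le\sharp\,\text{Fix}(\underline g)\le C\,\deg(\underline g)$ with $C$ uniform over $G$, rather than exact equality.
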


\begin{proof}
The first assertion is a direct consequence of the compactness of $\mathcal{M}_B(\Sigma_p^+)$ together with the
continuity of the function $\mathcal H$. We are left to prove that
\begin{equation}\label{eq:maxent}
\sup_{\eta\in\mathcal{M}_B(\Sigma_p^+)} h_{\text{top}}(S, \eta) =\log \Big(\max_{1\le i \le p} \deg(g_i)\Big).
\end{equation}
Let $1\le j \le p$ be such that $\deg(g_j)=\max_{1\le i \le p} \,\deg(g_i)$. Take $\underline a=(a_i)_{1\le i \le p}$, where $a_i=\delta_{ij}$
is the Kronecker delta function, and $\eta_{\underline a}=\delta_{jjj \dots}$. Then
$$
\sup_{\eta\in\mathcal{M}_B(\Sigma_p^+)} h_{\text{top}}(S, \eta)
	\ge h_{\text{top}}(S, \eta_{\underline a})
	= h_{\text{top}}(g_j)=\log \deg(g_j)
	= \log \Big(\max_{1\le i \le p} \deg(g_i)\Big).
$$
Conversely, assume that $\sup_{\eta\in\mathcal{M}_B(\Sigma_p^+)} h_{\text{top}}(S, \eta) > \log \Big(\max_{1\le i \le p} \deg(g_i)\Big)$. Using \eqref{eq:fixp}, we may find $\vep>0$ and $\eta\in\mathcal{M}_B(\Sigma_p^+)$ satisfying
\begin{equation*}
\lim_{n\to\infty} \frac1n \log \int \sharp \mbox{Fix}(g_{\omega_n}\dots g_{\omega_1}) \; d\eta(\omega)	> \log \Big(\max_{1\le i \le p} \deg(g_i)\Big) +2\vep.
\end{equation*}
Then, for every sufficiently large $n$, there exists $\omega \in \Sigma_p^+$, depending on $n$, such that
$$\sharp \,\mbox{Fix}\,(g_{\omega_n}\dots g_{\omega_1})> e^{\vep n} \,\Big(\max_{1\le i \le p} \deg(g_i)\Big)^n$$ 
which contradicts the growth rate of the fixed point sets of the expanding maps $g_{\omega_n}\dots g_{\omega_1}$.
Finally, recall that 
$$ h_{\text{top}}(S, \eta_p)=\log \Big(\frac{\sum_{i=1}^{p} \deg(g_i)}{p} \Big)$$
so $\eta_p$ is a maximizing measure for $\mathcal{H}$  
if and only if the maps $g_i$, for $1\le i \le p$, have equal degrees.
\end{proof}

\begin{remark}
It is clear from the equality in \eqref{eq:maxent} that any probability measure in $\mathcal{M}_B(\Sigma_p^+)$ that attains the maximum of $\mathcal{H}$ 
is of the form $\eta_{\underline a}\in \mathcal{M}_B(\Sigma_p^+)$ for some $\underline a$ in the simplex
$$
S=\Big\{ \underline a =(a_1. \dots, a_p) \in \mathbb R^+_0 : \sum_{i=1}^p a_i=1 \; \text{and}\; a_j=0 \,\text{whenever}\, \deg(g_j) \neq
\max_{1\le i \le p} \deg(g_i) \Big\}.
$$
In particular, uniqueness of such maximizing measures holds if and only if there exists a unique expanding map with larger degree.
\end{remark}

\subsubsection*{\textbf{\emph{Relative entropy \emph{vs.} annealed and quenched pressure}}}

In order to compare the relative entropies of the semigroup action with the several notions of pressure for skew-product dynamics we shall use the transfer operators defined in Section~\ref{sec:RPFoperators}. As previously remarked, when $\eta=\eta_{\underline a}$, we have
$$
h_{\text{top}}(S, \eta_{\underline a})
	=  \limsup_{n\to\infty}\frac1n \log \|\mathbf{\tilde L}_{\underline a, \underline \varphi}^n \,(\textbf{1})\|_0
	=  \limsup_{n\to\infty}\frac1n \log \|\mathbf{\hat L}_{\underline a, \underline \varphi}^n \,(\textbf{1})\|_0.
$$
It follows from \cite[Proposition~3.2]{Baladi} that the spectral radius of $\mathbf{\hat L}_{\underline a, \underline \varphi}$
coincides with the term $\exp(-P_{\text{top}}^{(a)}(\cF_G, \underline \varphi , \underline a) )$ and, for that reason,
\begin{equation}\label{eq:entropy-annealedpressure}
h_{\text{top}}(S, \eta_{\underline a}) = P_{\text{top}}^{(a)}(\cF_G, 0 , \underline a).
\end{equation}
This means that the relative entropy for asymmetric random walks coincides with the annealed topological
pressure and also implies the following generalization of formula (\ref{formula.Bufetov}).

\begin{corollary}\label{cor:Bufetov} Let $G_1=\{id, g_1,\cdots,g_p\}$, $p \geq 2$, be a finite set of expanding maps in $End^2(M)$. Given a non-trivial probability vector $\underline a=(a_1, a_2, \ldots,a_p)$, consider the Bernoulli probability measure $\eta_{\underline a}$ on $\Sigma_p^+$ and the annealed equilibrium state $\mu^{(a)}_{\underline a}$ for $\cF_G$ with respect to $\varphi \equiv 0$ and $\underline a$.
Then $h_{\text{top}}(S, \eta_{\underline a}) = P_{\text{top}}^{(a)}(\cF_G, 0 , \underline a).$
If $h^{\underline a}(\pi_{\mu^{(a)}_{\underline a}}) = h_{\pi_{\mu^{(a)}_{\underline a}}}(\sigma) - h_{\eta_{\underline a}}(\sigma)$, then
$$h_{\mu^{(a)}_{\underline a}}(\cF_G) = h_{\text{top}}(S, \eta_{\underline a}) + h_{\eta_{\underline a}}(\sigma).$$
\end{corollary}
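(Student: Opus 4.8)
The plan is to derive both assertions purely from the variational definition of the annealed topological pressure in \eqref{def:annealed.entropy}, combined with the identification of the relative entropy with the annealed pressure. The first equality $h_{\text{top}}(S, \eta_{\underline a}) = P_{\text{top}}^{(a)}(\cF_G, 0 , \underline a)$ is nothing more than \eqref{eq:entropy-annealedpressure}, which we have already obtained from \cite[Proposition~3.2]{Baladi}: the spectral radius of $\mathbf{\hat L}_{\underline a, \underline \varphi}$ with $\underline\varphi \equiv 0$ equals $\exp\big(P_{\text{top}}^{(a)}(\cF_G, 0 , \underline a)\big)$, and this same spectral radius coincides with $\exp\big(h_{\text{top}}(S, \eta_{\underline a})\big)$. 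Thus no additional work is required for the first part.

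For the second assertion, I would begin from the fact that, under the standing hypothesis that $G_1^*$ consists of $C^2$ expanding maps, the annealed equilibrium state $\mu^{(a)}_{\underline a}$ for $\varphi \equiv 0$ and $\underline a$ exists and, by definition, attains the supremum in \eqref{def:annealed.entropy}. Setting $\varphi \equiv 0$ so that the integral term drops out, this attainment reads
\begin{equation*}
P_{\text{top}}^{(a)}(\cF_G, 0 , \underline a) = h_{\mu^{(a)}_{\underline a}}(\cF_G) - h_{\pi_{\mu^{(a)}_{\underline a}}}(\sigma) + h^{\underline a}(\pi_{\mu^{(a)}_{\underline a}}).
\end{equation*}

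The next step is to insert the hypothesis $h^{\underline a}(\pi_{\mu^{(a)}_{\underline a}}) = h_{\pi_{\mu^{(a)}_{\underline a}}}(\sigma) - h_{\eta_{\underline a}}(\sigma)$ into this identity. The two copies of $h_{\pi_{\mu^{(a)}_{\underline a}}}(\sigma)$ then cancel, leaving
\begin{equation*}
P_{\text{top}}^{(a)}(\cF_G, 0 , \underline a) = h_{\mu^{(a)}_{\underline a}}(\cF_G) - h_{\eta_{\underline a}}(\sigma).
\end{equation*}
Combining this with the first assertion $h_{\text{top}}(S, \eta_{\underline a}) = P_{\text{top}}^{(a)}(\cF_G, 0 , \underline a)$ and rearranging yields exactly $h_{\mu^{(a)}_{\underline a}}(\cF_G) = h_{\text{top}}(S, \eta_{\underline a}) + h_{\eta_{\underline a}}(\sigma)$, as claimed.

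Since each step is an algebraic substitution, there is no genuine analytic obstacle; the only point demanding care is to apply the variational characterization of $\mu^{(a)}_{\underline a}$ correctly and to keep careful track of which base-space entropy term, $h_{\pi_{\mu^{(a)}_{\underline a}}}(\sigma)$ or $h_{\eta_{\underline a}}(\sigma)$, appears at each stage. The hypothesis is precisely the relation that forces the marginal entropy $h_{\pi_{\mu^{(a)}_{\underline a}}}(\sigma)$ to be replaced by the prescribed base entropy $h_{\eta_{\underline a}}(\sigma)$, which is what renders the final formula intrinsic to the random walk encoded by $\eta_{\underline a}$.
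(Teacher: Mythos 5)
Your proposal is correct and follows essentially the same route as the paper: the first equality is exactly the identity \eqref{eq:entropy-annealedpressure} derived from \cite[Proposition~3.2]{Baladi}, and the second is obtained, just as in the paper, by evaluating the variational expression \eqref{def:annealed.entropy} at the annealed equilibrium state and substituting the hypothesis on $h^{\underline a}(\pi_{\mu^{(a)}_{\underline a}})$ so that $-h_{\pi_{\mu^{(a)}_{\underline a}}}(\sigma)+h^{\underline a}(\pi_{\mu^{(a)}_{\underline a}})$ collapses to $-h_{\eta_{\underline a}}(\sigma)$. No gaps.
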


\begin{proof} This is a direct consequence of the equality (\ref{eq:entropy-annealedpressure}) since, under the assumption on $h^{\underline a}(\pi_{\mu_{\underline a}})$, we have
$$P_{\text{top}}^{(a)}(\cF_G, 0 , \underline a) = \sup_{\{\mu \colon {\mathcal{F}_G}_*\mu=\mu\}} \Big\{ h_\mu(\cF_G) - h_{\pi_\mu}(\sigma) + h^{\underline a}(\pi_\mu)\Big\} = h_{\mu^{(a)}_{\underline a}}(\cF_G) - h_{\eta_{\underline a}}(\sigma).$$
Notice that the condition on $h^{\underline a}(\pi_{\mu_{\underline a}^{(a)}})$ is fulfilled when ${\underline a}= {\underline p}$ (cf. \eqref{eq:entropy-per-site-p}).
\end{proof}

Given a non-trivial probability vector $\underline a$, for some potentials $\varphi$ the transfer operator $\mathbf{\hat L}_{\underline a, \underline \varphi}$ coincides with the averaged normalized transfer operator used in \cite{S00}. Therefore, we may match the values of the corresponding pressures and their equilibrium states, and deduce the following thermodynamic criterium for the self-similar probability measures constructed in \cite{S00}.

\begin{proposition}\label{le:hfibered-a}
Let $G_1=\{id, g_1,\cdots,g_p\}$, $p \geq 2$, be a finite set of expanding maps in $End^2(M)$. Consider the semigroup $G$ generated by $G_1$ and  the corresponding skew-product $\mathcal{F}_G :  \Sigma_p^+  \times M  \to  \Sigma_p^+  \times M$. If $\underline a=(a_1, a_2, \ldots,a_p)$ 
is a non-trivial probability vector,  $\eta_{\underline a}$ the Bernoulli probability measure on $\Sigma_p^+$ determined by $\underline a$ and $\mu_{\underline a}$ the self-similar probability measure constructed in ~\cite{S00}, then the following assertions are equivalent:
\begin{enumerate}
\item $h_{\mu_{\underline a}}(\mathcal{F}_G) = \sup_{\{\mu \colon {\mathcal{F}_G}_*(\mu)=\mu,\,\pi_*(\mu)\,=\,\eta_{\underline a}\}} \,h_\mu(\mathcal{F}_G).$
\item $P_{\text{top}}^{(q)}(\cF_G, 0 , \underline a)
=  \int \log \deg \,(g_i)\,d\underline{a}(i).$
\end{enumerate}
\end{proposition}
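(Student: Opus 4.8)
The plan is to derive the equivalence of (1) and (2) directly from two identities already available in the excerpt: the expression of the quenched pressure of the null potential as a constrained supremum of metric entropies, and Sumi's entropy formula \eqref{eq.sumi} for the self-similar measure $\mu_{\underline a}$. Once both conditions are rewritten in terms of a single quantity, the equivalence reduces to a one-line cancellation.

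First I would abbreviate $\Phi := \sup\{h_\mu(\cF_G) \colon {\cF_G}_*\mu=\mu,\ \pi_*\mu=\eta_{\underline a}\}$, so that condition (1) reads $h_{\mu_{\underline a}}(\cF_G)=\Phi$. Since $\pi_*(\mu_{\underline a})=\eta_{\underline a}$, the measure $\mu_{\underline a}$ is admissible in this supremum, so $\Phi$ is finite (all degrees are finite, $\eta_{\underline a}$ is Bernoulli on a finite alphabet, and $\Phi\le h_{\text{top}}(\cF_G)<\infty$). Specializing the definition \eqref{def:quenched.entropy} to $\varphi\equiv 0$ gives
\begin{equation*}
P_{\text{top}}^{(q)}(\cF_G,0,\underline a)=\Phi-h_{\eta_{\underline a}}(\sigma).
\end{equation*}

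Next I would invoke \eqref{eq.sumi}, which yields $h_{\mu_{\underline a}}(\cF_G)=h_{\eta_{\underline a}}(\sigma)+\sum_{k=1}^p a_k\log\deg(g_k)$, together with the elementary identity $\sum_{k=1}^p a_k\log\deg(g_k)=\int \log\deg(g_i)\,d\underline a(i)$. Substituting the former for the left-hand side of condition (1) and the rearranged quenched identity $\Phi=P_{\text{top}}^{(q)}(\cF_G,0,\underline a)+h_{\eta_{\underline a}}(\sigma)$ for the right-hand side, condition (1) becomes
\begin{equation*}
h_{\eta_{\underline a}}(\sigma)+\int \log\deg(g_i)\,d\underline a(i)=P_{\text{top}}^{(q)}(\cF_G,0,\underline a)+h_{\eta_{\underline a}}(\sigma).
\end{equation*}
Cancelling the finite term $h_{\eta_{\underline a}}(\sigma)$ leaves precisely condition (2); as each step is reversible, (1) and (2) are equivalent.

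The argument carries essentially no analytic content---it is a substitution of two previously established formulas---so the only real care needed is bookkeeping. I would verify that \eqref{eq.sumi} applies here, which is exactly where the hypothesis that $\underline a$ is a \emph{non-trivial} probability vector (all $a_k>0$) is used, since that is the standing assumption under which \cite{S00} builds $\mu_{\underline a}$ and computes its entropy; and I would check that the quenched-pressure identity I quote is the one attached to $\varphi\equiv 0$ with the marginal constrained to $\eta_{\underline a}$, so that the same $h_{\eta_{\underline a}}(\sigma)$ appears on both sides and the cancellation is legitimate. That identification of objects and of the entropy normalization is the main, and only mild, obstacle.
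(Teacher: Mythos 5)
Your proof is correct and follows essentially the same reduction as the paper's: rewrite condition (1) using the quenched identity $P_{\text{top}}^{(q)}(\cF_G,0,\underline a)=\sup_{\{\mu \colon {\mathcal{F}_G}_*\mu=\mu,\,\pi_*\mu=\eta_{\underline a}\}}h_\mu(\cF_G)-h_{\eta_{\underline a}}(\sigma)$ together with the entropy formula $h_{\mu_{\underline a}}(\cF_G)=h_{\eta_{\underline a}}(\sigma)+\sum_{k}a_k\log\deg(g_k)$, and cancel $h_{\eta_{\underline a}}(\sigma)$. The only divergence is that you import that entropy formula directly from \eqref{eq.sumi} (Sumi's theorem as recalled earlier in the paper), whereas the paper re-derives it by identifying $\mu_{\underline a}$ with the annealed equilibrium state of the normalized potential $\underline\varphi=(-\log\deg(g_1),\dots,-\log\deg(g_p))$, for which $\mathbf{\hat L}_{\underline a,\underline\varphi}\mathbf{1}=\mathbf{1}$ and hence $P_{\text{top}}^{(a)}(\cF_G,\underline\varphi,\underline a)=0$; both routes produce the same identity, so your shortcut is legitimate.
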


We observe that, by \eqref{eq:quenched-a}, the condition (1) is equivalent to say that $\mu_{\underline a}=\mu^{(q)}_{\underline a}$, where $\mu^{(q)}_{\underline a}$ is the unique quenched equilibrium state of $\cF_G$ with respect to $\varphi \equiv 0$ and $\underline a$.

\begin{proof}
Fix $\underline a=(a_1, a_2, \ldots,a_p)$ and the potential $\underline \varphi =( -\log \deg \,(g_1), \dots, -\log \deg \,(g_p))$. The transfer operator $\mathbf{\hat L}_{\underline a, \underline \varphi}$ is precisely the averaged normalized transfer operator introduced in \cite{S00}. Therefore, $\mathbf{\hat L}_{\underline a, \underline \varphi} 1=1$, and consequently $P_{\text{top}}^{(a)}(\cF_G, \underline \varphi , \underline a)= 0$.
Moreover, $\mu_{\underline a}=\mu^{(a)}_{\varphi, \underline a}$, which is the unique annealed  equilibrium state for $\cF_G$ with respect to
$\underline \varphi$ and $\underline a$. So, equation~\eqref{def:annealed.entropy} yields
$$
h_{\mu_{\underline a}}(\mathcal{F}_G) =  - \int \log(\underline a(\omega_1) e^{\varphi(\omega,x)}) \, d\mu_{\underline a}.
$$
On the other hand, the quenched variational principle indicates that
$$
\sup_{\{\mu \colon {\mathcal{F}_G}_*\mu=\mu, \, \pi_\mu=\eta_{\underline a}\}} \Big\{ h_\mu(\cF_G) \Big\} - h_{\eta_{\underline a}}(\sigma)
	= P_{\text{top}}^{(q)}(\cF_G, 0 , \underline a).
$$
Thus the condition (1) in the statement of the proposition is equivalent to
\begin{align*}
P_{\text{top}}^{(q)}(\cF_G, 0 , \underline a)
	& = -\int \log(\underline a(\omega_1) e^{\varphi(\omega,x)}) \, d\mu_{\underline a} - h_{\eta_{\underline a}}(\sigma)\\
	& =  \sum_{i=1}^p - a_i \log a_i + \sum_{i=1}^p  a_i  \log \deg \,(g_i) +\sum_{i=1}^p  a_i \log a_i
	 =  \int \log \deg \,(g_i) d\underline{a}(i).
\end{align*}
\end{proof}

\begin{corollary}\label{cor:fibered-entropy-formula}
Let $G^*_1=\{ g_1,\cdots,g_p\}$, $p \geq 2$, be a finite set of $C^2$ expanding maps, $G$ the semigroup generated by $G_1$ and $\mathcal{F}_G :  \Sigma_p^+  \times M  \to  \Sigma_p^+  \times M$ the corresponding skew-product. Given a non-trivial probability vector $\underline a$ and the Bernoulli probability measure $\eta_{\underline a}$, 
we have
$$\int_{\Sigma_p^+}\,h_{\text{top}}(\mathcal{F}_G,\pi^{-1}(\omega))\,d\eta_a(\omega) = \sum_{k=1}^p\, a_k \,\log \deg\,(g_k)=\int_{\Sigma_p^+} \, \log \deg \,(g_{\omega_1}) \,d\eta_a(\omega).$$
\end{corollary}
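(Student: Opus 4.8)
The plan is to combine the Ledrappier--Walters relative variational principle \eqref{LW} with the explicit description of the self-similar measure $\mu_{\underline a}$ recalled from \cite{S00} in Subsection~\ref{subset:Sumi}. Both tools are available in the present setting: the skew-product $\mathcal{F}_G$ is a topologically mixing Ruelle-expanding map by Lemma~\ref{le.skew expanding mixing}, and $\eta_{\underline a}$ is a $\sigma$-invariant (in fact Bernoulli) probability measure on $\Sigma_p^+$ since $\underline a$ is a non-trivial probability vector, so the fibered entropy on the left-hand side is well defined.

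First I would instantiate \eqref{LW} at $\eta=\eta_{\underline a}$ to obtain
$$\sup_{\{\mu \colon {\mathcal{F}_G}_*(\mu)=\mu,\,\pi_*(\mu)\,=\,\eta_{\underline a}\}}\,h_\mu(\mathcal{F}_G)= h_{\eta_{\underline a}}(\sigma) + \int_{\Sigma_p^+}\,h_{\text{top}}(\mathcal{F}_G,\pi^{-1}(\omega))\,d\eta_{\underline a}(\omega),$$
which isolates the sought fibered entropy as the only unknown on the right-hand side. Next I would evaluate the constrained supremum on the left using \eqref{eq.sumi1}, which asserts that it is attained precisely by the self-similar measure $\mu_{\underline a}$, so the supremum equals $h_{\mu_{\underline a}}(\mathcal{F}_G)$. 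The decisive input is then the closed formula \eqref{eq.sumi}, namely $h_{\mu_{\underline a}}(\mathcal{F}_G)= h_{\eta_{\underline a}}(\sigma)+\sum_{k=1}^p a_k\log\deg(g_k)$.

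Equating the two resulting expressions for $h_{\mu_{\underline a}}(\mathcal{F}_G)$ and cancelling the common term $h_{\eta_{\underline a}}(\sigma)$ gives $\int_{\Sigma_p^+} h_{\text{top}}(\mathcal{F}_G,\pi^{-1}(\omega))\,d\eta_{\underline a}(\omega)=\sum_{k=1}^p a_k\log\deg(g_k)$. The final identity in the statement is immediate: since $\eta_{\underline a}$ is the Bernoulli measure with marginal $\underline a$ in each coordinate, the integrand $\omega\mapsto\log\deg(g_{\omega_1})$ depends only on the first symbol and integrates to $\sum_{k=1}^p a_k\log\deg(g_k)$. I do not expect a genuine obstacle, as the entire argument is the bookkeeping of matching the relative variational principle against Sumi's formula; the only point deserving a moment's care is checking that the cancelled entropy $h_{\eta_{\underline a}}(\sigma)=-\sum_k a_k\log a_k$ is finite, which holds because $\eta_{\underline a}$ is Bernoulli on a finite alphabet, so that the subtraction is legitimate.
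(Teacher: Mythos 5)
Your argument is correct and follows essentially the same route as the paper: the paper's proof likewise rests on the combination of \eqref{LW}, \eqref{eq.sumi1} and \eqref{eq.sumi} (merely packaged through Proposition~\ref{le:hfibered-a} and the quenched pressure, an intermediate step your direct cancellation of $h_{\eta_{\underline a}}(\sigma)$ shows to be dispensable), and it concludes the second equality exactly as you do, from the constancy of $\log\deg(g_{\omega_1})$ on the cylinders $C(1;k)$ of $\eta_{\underline a}$-measure $a_k$.
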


\begin{proof}
We start noticing that, for any non-trivial probability vector $\underline a$ and its self-similar probability measure $\mu_{\underline a}$, the condition (1) in the statement of Proposition~\ref{le:hfibered-a} is valid (cf. \eqref{eq.sumi1}). Therefore, using Proposition~\ref{le:hfibered-a}, the equations (\ref{eq.sumi})
and (\ref{LW})
yield
$$\int_{\Sigma_p^+}\,h_{\text{top}}(\mathcal{F}_G,\pi^{-1}(\omega))\,d\eta_a = \sum_{k=1}^p\, a_k \,\log \deg\,(g_k).$$
The second equality in the statement is an immediate consequence from two facts: in each cylinder $C(1; k):=\{\omega \in \Sigma_p^+ \colon \omega_1=k\}$, the map $\log \deg \,(g_{\omega_1})$ is constant; and $\eta_a\,(C(1;k))=a_k$ for any $k \in \{1,2,\cdots,p\}$.
\end{proof}

\subsection{Examples}\label{sec:examples}

Let us analyze two (non-abelian) examples that illustrate the range of applications of our results on semigroup actions, transfer operators and the dynamical zeta function. 

\begin{example}\label{ex.2and3}

Let $g_1,\,g_2:\mathbb{S}^1\to\mathbb{S}^1$ be the circle expanding maps given by $g_1(z)=z^2$ and $g_2(z)=z^3$ and consider the semigroup $G$ generated by $G_1=\{id, g_1,g_2\}$. A simple computation shows that, for every $n \in \mathbb{N}$
$$N_n(G)=\frac1{2^n}\sum_{k=0}^{n} (2^k\,3^{n-k} -1)=\mathcal {O}\left(\frac{5}{2}\right)^n $$
and, consequently, $\wp(S) = \log \,(\frac{5}{2})$ and $\rho_S = \frac25$. Moreover, it follows from (\ref{formula.Bufetov}) that
$$h_{\text{top}}(S)=\log 5 - \log 2.$$

If $\eta_{\underline m}$ is the Bernoulli probability measure on $\Sigma_2^+$ determined by the weights $\underline m=(\frac{2}{5}, \frac{3}{5})\equiv (0.4, 0.6)$, equation (\ref{eq.sumi}) becomes
$$h_{\mu_{\underline m}}(\mathcal{F}_G) = h_{\text{top}}(\mathcal{F}_G) = \log 5$$
and equation (\ref{LW}) informs that
$$\int_{\Sigma_2^+}\,h_{\text{top}}(\mathcal{F}_G,\pi^{-1}(\omega))\,d\eta_{\underline m}(\omega)= \frac{2}{5}\,\log 2 + \frac{3}{5}\,\log 3.$$
So, $$h_{\text{top}}(S) < \int_{\Sigma_2^+}\,h_{\text{top}}(\mathcal{F}_G,\pi^{-1}(\omega))\,d\eta_{\underline m}(\omega).$$

Concerning $\underline a=(\frac{1}{2}, \frac{1}{2})\equiv (0.5, 0.5)$, which corresponds to the probability measure $\eta_{\underline a}=\eta_{\underline 2}$ on $\Sigma_2^+$, we deduce from Proposition~\ref{le:hfibered} that
$$h_{\text{top}}(S) > \int_{\Sigma_2^+}\,h_{\text{top}}(\mathcal{F}_G,\pi^{-1}(\omega))\,d\eta_{\underline 2}(\omega).$$
We may add that, if $\mu_{\underline a}$ is the self-similar measure previously mentioned that is assigned to $\underline a$ in \cite{S00}, then, again by (\ref{eq.sumi}) and (\ref{LW}), we have
$$h_{\mu_{\underline a}}(\mathcal{F}_G) = \log 2 + \frac{\log 2 + \log 3}{2}$$
and
$$\sup_{\mu \colon {\mathcal{F}_G}_*(\mu)=\mu,\,\pi_*(\mu)\,=\,\eta_{\underline 2}}\,h_\mu(\mathcal{F}_G)= \log 2 + \int_{\Sigma_2^+}\,h_{\text{top}}(\mathcal{F}_G,\pi^{-1}(\omega))\,d\eta_{\underline 2}(\omega).$$

\bigskip

\noindent Consequently,
$$\frac{\log 2 + \log 3}{2} \leq \int_{\Sigma_2^+}\,h_{\text{top}}(\mathcal{F}_G,\pi^{-1}(\omega))\,d\eta_{\underline 2}(\omega) < h_{\text{top}}(S)=\log 5 - \log 2.$$
Moreover, from Corollary~\ref{cor:fibered-entropy-formula}, we conclude that
$$\int_{\Sigma_2^+}\,h_{\text{top}}(\mathcal{F}_G,\pi^{-1}(\omega))\,d\eta_{\underline 2}(\omega)= \frac{\log 2 + \log 3}{2}$$
and, more generally, that, for any choice of $\underline a=(a_1,a_2)$ with $a_i>0$ and $a_1 + a_2 = 1$, we have
$$\int_{\Sigma_2^+}\,h_{\text{top}}(\mathcal{F}_G,\pi^{-1}(\omega))\,d\eta_{\underline a}(\omega)= a_1 \log 2 + a_2 \log 3.$$
Therefore, there is a (unique) vector $\underline a$ whose corresponding probability measure $\eta_{\underline a}$ on $\Sigma_2^+$ satisfies
$$h_{\text{top}}(S) = \int_{\Sigma_2^+}\,h_{\text{top}}(\mathcal{F}_G,\pi^{-1}(\omega))\,d\eta_{\underline a}(\omega),$$
namely
$$\underline a=\left(\frac{\log\,\frac{6}{5}}{\log\,\frac{3}{2}},\,\,\frac{\log\,\frac{5}{4}}{\log\,\frac{3}{2}} \right)\approx (0.45, 0.55).$$
\end{example}

\begin{example}
Given $p \in \mathbb{N}$, let $A_i\in GL(p,\mathbb Z)$ induce linear expanding endomorphisms $g_i:=g_{A_i}$ on $\mathbb T^p$, for $i=1,\dots, p$. Consider the continuous potential $\varphi:\Sigma^+_p \times M \rightarrow \mathbb{R}$ given by
$\varphi(\omega,x)= -\log |\det Dg_{\omega_1}(x)| = -\log \deg\,(g_{\omega_1}).$ Then, by \cite[Proposition~2]{Baladi}, the quenched and the annealed equilibrium states of $\varphi$ coincide and are SRB measures. In this setting, for any non-trivial probability vector $\underline a$, the self-similar $\cF_G$-invariant probability measure $\mu_{\underline a}$ constructed in \cite{S00} coincides with the annealed (hence quenched) SRB measure for $\cF_G$ with respect to $\cF$, $\underline \varphi$ and $\underline a$. 
In particular, we have
\begin{align*}
P_{\text{top}}^{(q)}(\cF_G, \underline \varphi , \underline a)
	=
h_{\mu_{\underline a}}(\cF_G) - h_{\eta_{\underline a}}(\sigma) + \int \varphi \, d\mu_{\underline a}
 \end{align*}
that is,
 $$
 h_{\mu_{\underline a}}(\cF_G)  = P_{\text{top}}^{(q)}(\cF_G, \underline \varphi , \underline a) + h_{\eta_{\underline a}}(\sigma)
 	- \int \varphi \, d\mu_{\underline a}
 $$
so, comparing this equality with \eqref{eq.sumi}, we obtain
$$P_{\text{top}}^{(q)}(\cF_G, \underline \varphi , \underline a) =  \sum_{i=1}^p\, a_i \,\log \deg\,(g_i)- \int \varphi \, d\mu_{\underline a}=\int\,\log \deg\,(g_i)\,d\underline a(i) - \int \varphi \, d\mu_{\underline a}.$$
\end{example}

\section{Stationary measures}\label{sec:stationary-measure}
Since probability measures invariant under all elements of the semigroup are unlikely to exist, the concept of stationary measure is the most natural to be addressed while studying ergodic properties of semigroup actions. Consider a finite set $G_1=\{id, g_1,\cdots,g_p\}$, $p \geq 2$, of $C^2$ expanding maps on a compact connected Riemannian manifold $M$ and let $G$ be the semigroup generated by $G_1$. Denote by $(\Sigma_p^+, \sigma)$ the full shift in $p$ symbols from the alphabet $\{1,\cdots,p\}$, by $\underline p$ the vector $(\frac{1}{p}, \ldots,\frac{1}{p})$ and by $\eta_{\underline p}$ the equally distributed Bernoulli $\sigma$-invariant probability measure on the Borel sets of $\Sigma_p^+$ given by the product measure of $\theta(\{i\})=\frac{1}{p}$ for any $i \in \{1,\cdots,p\}$.

Given a point $x \in M$ and $\omega=(\omega_1,\omega_2,\cdots) \in \Sigma_p^+$, we define the random orbit of $x$ as
$$x, \quad g_{\omega_1}(x), \quad g_{\omega_2}\,g_{\omega_1}(x),\ldots, g_{\omega_n}\,g_{\omega_{n-1}}\,\cdots g_{\omega_1}(x), \,\ldots.$$
This means that, for each $x \in M$ and $n \in \mathbb{N}$, the evolution of $x$ up to time $n$ is described by its images under the maps obtained by the concatenation of the $g_{\omega_i}$'s up to $\omega_n$. In what follows we will refer to this map as $f^n_\omega = g_{\omega_{n}}\,g_{\omega_{n-1}}\, \ldots \,g_{\omega_1}$. This way, the semigroup action of $G$ may be understood either as a random walk inside $End^2(M)$ or as a (non-local) random perturbation inside $End^2(M)$. In both readings, the orbits correspond to projections in the fiber $M$ of the orbits of the skew-product $\mathcal{F}_G$. More precisely, the shift $(\Sigma_p^+, \sigma)$ allows us to identify each element $i_n \dots i_1$ of the free semigroup $F_p$ and each $\underline{g}=g_{i_n}\,\cdots,\,g_{i_1}\in G$ satisfying $|\underline g|=n$ with $f^n_\omega$ for some suitable choice of $\omega \in \Sigma_p^+$: one must set $\omega_k=i_k$ for all $1 \leq k \leq n$. Observe, however, that for each concatenation $g_{i_n}\,\cdots,\,g_{i_1}$ we have several choices of paths $\omega$ where we may fit the finite word $i_1,\,\cdots,\,i_n$ in the first $n$ steps. These choices of $\omega$ define a cylinder in $\Sigma_p^+$, we denote by $C(1;i_1,\,\cdots,\,i_n)$, whose $\eta_{\underline p}$ measure is equal to $\frac{1}{p^n}$. We also notice that, although the $F_p$ consists of finite concatenations of generators, the induced skew-product $\mathcal{F}_G$, with a dynamics driven by the shift on one-sided sequences endowed with the invariant probability measure $\eta_{\underline p}$, spreads with equal probability to typical orbits with respect to the random walk $\iota_*(\eta_{\underline p})$ on $G$.

\begin{proposition}\label{prop-stationary}
Let $G$ be the semigroup generated by a set $G_1=\{id, g_1,\dots,g_p\}$, where each $g_i$ is a $C^2$ expanding map on a compact connected Riemannian manifold $M$, and let $S: G\times M \to M$ be the corresponding continuous semigroup action. Then there exists a H\"older-continuous nonnegative map $H_G:M \to \mathbb{R}$ such that $\nu_G = H_G\,\Leb$ is an absolutely continuous stationary probability measure for the semigroup $S$ and the symmetric random walk
$R_{\underline p}=\iota_*(\eta_{\underline p})$.
\end{proposition}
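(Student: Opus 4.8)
The plan is to realize the stationary density as the fixed point of an averaged transfer operator that is dual to the Lebesgue measure. Set $\varphi_i=-\log|\det Dg_i|$ for $1\le i\le p$; since each $g_i$ is a $C^2$ expanding map, each $\varphi_i$ is H\"older, so Remark~\ref{re.particular_cases} applies to every $g_i$. I would then work with the averaged operator $\mathbf L:=\tilde{\mathbf L}_{\underline p,\underline\varphi}$ of \eqref{eq:RPF-integratedM}, that is
$$
\mathbf L\phi(x)=\frac1p\sum_{i=1}^p\,\sum_{g_i(y)=x}\frac{\phi(y)}{|\det Dg_i(y)|},
$$
a positive bounded operator on $C^0(M)$ that preserves the space of H\"older functions. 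The observation that drives the whole argument is that each summand is the Ruelle operator of $g_i$ relative to Lebesgue, so the change of variables formula gives $\int \mathfrak L_{g_i,\varphi_i}\phi\,d\Leb=\int\phi\,d\Leb$ and hence $\mathbf L^{*}\Leb=\Leb$; in particular $\int \mathbf L^{n}\mathbf 1\,d\Leb=1$ for every $n\ge 1$.

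The first substantial step is to establish uniform bounds on the iterates $\mathbf L^{n}\mathbf 1$. Expanding $\mathbf L^{n}\mathbf 1$ as a sum over the $p^{n}$ words of length $n$ of the Jacobian weights carried by the contractive inverse branches of the corresponding concatenations, the standard bounded distortion estimate for compositions of $C^2$ expanding maps (the inverse branches contract exponentially, so the logarithm of the product of the Jacobians has uniformly summable oscillation) yields constants $0<c_1\le c_2<\infty$ and $C>0$, all independent of $n$, such that $c_1\le \mathbf L^{n}\mathbf 1\le c_2$ and such that the family $(\mathbf L^{n}\mathbf 1)_{n\ge 1}$ is uniformly H\"older with seminorm at most $C$. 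This distortion control is the technical heart of the argument and the step I expect to be the main obstacle, since it is precisely what converts the measure-theoretic identity $\mathbf L^{*}\Leb=\Leb$ into the existence of a genuine, regular invariant density rather than merely an invariant functional.

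With these estimates in hand I would pass to the Ces\`aro averages $H_N:=\frac1N\sum_{n=0}^{N-1}\mathbf L^{n}\mathbf 1$. The uniform H\"older bound makes $(H_N)_{N}$ equicontinuous and uniformly bounded, so by the Arzel\`a--Ascoli theorem some subsequence converges in $C^0(M)$ to a limit $H_G$, which inherits nonnegativity, the bounds $c_1\le H_G\le c_2$, the H\"older seminorm bound $C$, and the normalization $\int H_G\,d\Leb=1$. A telescoping computation gives $\mathbf L H_N-H_N=\frac1N(\mathbf L^{N}\mathbf 1-\mathbf 1)$, whose $C^0$-norm tends to $0$; since $\mathbf L$ is continuous on $C^0(M)$, the limit satisfies $\mathbf L H_G=H_G$.

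Finally I would check that $\nu_G:=H_G\,\Leb$ is the desired measure. It is a probability measure by the normalization and is absolutely continuous with H\"older density $H_G$ by construction. For stationarity with respect to $R_{\underline p}=\iota_*(\eta_{\underline p})$ it suffices, by \eqref{de.stationary}, to verify $\int\psi\,d\nu_G=\frac1p\sum_{i=1}^p\int\psi\circ g_i\,d\nu_G$ for every $\psi\in C^0(M)$. Applying the adjoint relation $\int(\psi\circ g_i)\,\phi\,d\Leb=\int\psi\,\mathfrak L_{g_i,\varphi_i}\phi\,d\Leb$ with $\phi=H_G$ and summing over $i$ gives $\frac1p\sum_{i=1}^p\int\psi\circ g_i\,d\nu_G=\int\psi\,(\mathbf L H_G)\,d\Leb=\int\psi\,H_G\,d\Leb=\int\psi\,d\nu_G$, which is exactly the required identity. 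Alternatively, one may invoke the random thermodynamic formalism of \cite{Baladi}: for the potential $\underline\varphi=(-\log|\det Dg_1|,\dots,-\log|\det Dg_p|)$ the operator \eqref{eq:RPF-integratedM} is dual to Lebesgue exactly as above, and its positive H\"older fixed point is the density of the absolutely continuous annealed equilibrium state; the underlying skew-product $\mathcal F_G$ is Ruelle-expanding and topologically mixing by Lemma~\ref{le.skew expanding mixing}, so Theorem~\ref{te.Ruelle} secures the regularity and uniqueness used in this route.
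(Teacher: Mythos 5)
Your proposal is correct and follows essentially the same route as the paper: the paper works with exactly the same averaged operator (written there as $\widehat{\mathcal{L}}_G\,(\psi)(x)=\int \mathcal{L}_\omega(\psi)(x)\,d\eta_{\underline p}(\omega)=\frac1p\sum_i\mathfrak L_{g_i,-\log|\det Dg_i|}\psi(x)$), obtains the H\"older fixed point $H_G=\lim_n\widehat{\mathcal{L}}_G^n(\mathbf 1)$ by citing the uniform distortion estimates for random perturbations of expanding maps in \cite{V97}, and verifies stationarity by the same duality-with-Lebesgue computation (phrased via the Koopman operator and Fubini). The only difference is that you prove the existence of the fixed point by hand (bounded distortion, Ces\`aro averages, Arzel\`a--Ascoli) where the paper invokes the reference, which is a legitimate and standard substitute.
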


\begin{proof} The linear Koopman and transfer operators associated to each random perturbation of $f^1_\omega$ (which is $C^2$ expanding; see Remark~\ref{re.particular_cases}) are defined by
$$
U_\omega \,\psi = \psi\,(f^1_\omega)
\quad\text{and}\quad
\mathcal{L}_\omega \,(\psi)(x)= \sum_{f^1_\omega(y)=x}\,\frac{1}{|\det \,Df^1_\omega|}\, \psi (y)
$$
for any H\"older-continuous map $\psi$ and $x \in M$. Their iterates are, respectively,
$$
U_\omega^n \,\psi = \psi\,(f^n_\omega)
\quad\text{and}\quad
\mathcal{L}^n_\omega \,(\psi)(x)= \sum_{f^n_\omega(y)=x}\,\frac{1}{|\det \,Df^n_\omega|}\, \psi (y).
$$
Accordingly, the global transfer operators of the random perturbation of $f^1_\omega$ are given by
$$\widehat{U}_G \,\psi = \int\,U_\omega \,\psi\,\,d\eta_{\underline p}(\omega)
\quad\text{and}\quad
\widehat{\mathcal{L}}_G \,(\psi)(x) = \int\,\mathcal{L}_\omega \,(\psi) (x) \,d\eta_{\underline p}(\omega)
$$
for every Lebesgue-integrable observable $\psi:M\to\mathbb{R}$ and all $x \in M$.
Taking into account that, as $G_1$ is finite, then the expanding estimates of the elements of $G_1$ are uniform, we may apply the classical results concerning random perturbations of expanding dynamics and deduce that:

\begin{lemma}\cite[Section 2]{V97} There is a nonnegative H\"older function
$H_G=\lim_{n \to +\infty}\,\widehat{\mathcal{L}}_G^n \,(\textbf{1})$
which is a fixed point for the operator $\widehat{\mathcal{L}}_G$.
\end{lemma}

Normalizing $H_G$ so that $\int H_G\,d\Leb=1$, and setting $\nu_G=H_G\,Leb$, we obtain a probability measure in $M$ absolutely continuous with respect to Lebesgue.

\begin{lemma}
$\nu_G$ is a stationary measure.
\end{lemma}

\begin{proof}
The measure $\nu_G$ satisfies $\int\,(\widehat{U}_G \,\psi)\,d\nu_G=\int\,\psi\,d\nu_G$ for every $\psi \in C^0(M)$, that is,
$$\int\,\left(\int\,(\psi\circ g_{\omega_1})(x)\,d\eta_{\underline p}(\omega)\right) \,d\nu_G(x) = \int\,\psi(x)\,d\nu_G(x).$$
Using Fubini-Tonelli theorem and taking into account that $\eta_{\underline p}$ is equally distributed, we may exchange the order of integration on the first term and obtain
$$\int \Big( \int (\psi \circ \underline g)(x) \, d \nu_G(x) \Big) dR_{\underline p}(\underline g)=\int \psi (x)\,d\nu_G(x).$$ This confirms that $\nu_G$ is an $R_{\underline p}$-stationary measure for the semigroup $S$ (cf. (\ref{de.stationary})).
\end{proof}
\end{proof}

 In general one cannot expect the expanding maps of the semigroup to have common invariant probability measures; for that reason, the stationary probability measure is seldom invariant under all the elements of $G$. We also observe that, when studying the statistical stability of a dynamical system with respect to absolutely continuous invariant measures, one usually considers iterations of randomly  chosen dynamics in a neighborhood of the original dynamics and randomness is given by a parameter assumed to belong to an interval and to be absolutely continuous with respect to the Lebesgue measure (see for instance \cite{AAV} and references therein). In the previous result we did not require the $C^2$-expanding maps to be close to each other and the randomness of the iterations is given by the fixed random walk.

We note that the notions of stationary measure for the semigroup action $S$ and invariant measure for the skew-product $\mathcal{F}_G$ are somehow linked, as the next result indicates.

\begin{proposition}\label{le.stationary_invariant} Let $\eta$ be a $\sigma$-invariant probability measure on $\Sigma_p^+$ and consider $R_{\eta}=\iota_*(\eta)$. Then $\nu$ is an $R_{\eta}$-stationary measure on $M$ if and only if $\eta \times \nu$ is an $\mathcal{F}_G$-invariant probability measure.
\end{proposition}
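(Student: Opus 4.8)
The plan is to prove both implications by testing against observables and reducing, via the Stone--Weierstrass theorem, to product functions $\Phi(\omega,x)=a(\omega)\,b(x)$ with $a\in C^0(\Sigma_p^+)$ and $b\in C^0(M)$. Since finite linear combinations of such products are dense in $C^0(\Sigma_p^+\times M)$ and both sides of the invariance identity are continuous linear functionals, it suffices to verify $\int \Phi\circ\mathcal{F}_G\,d(\eta\times\nu)=\int \Phi\,d(\eta\times\nu)$ on this generating family. Throughout I would use the explicit form $\mathcal{F}_G(\omega,x)=(\sigma(\omega),g_{\omega_1}(x))$, the $\sigma$-invariance of $\eta$, and Fubini--Tonelli to separate the base and fibre integrations.

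For the implication ``$\eta\times\nu$ invariant $\Rightarrow$ $\nu$ stationary'' I would simply specialize the invariance identity to observables that do not depend on the base, i.e. $\Phi(\omega,x)=\psi(x)$. Then the left-hand side is $\int\psi\,d\nu$, while the right-hand side equals $\int_{\Sigma_p^+}\int_M \psi\big(g_{\omega_1}(x)\big)\,d\nu(x)\,d\eta(\omega)$; recognizing $g_{\omega_1}$ as the generator selected by the first symbol and rewriting the $\eta$-integral as an integral against $R_\eta=\iota_*(\eta)$ yields exactly the stationarity relation~\eqref{de.stationary}. This direction is immediate and uses no more than the definition of $\mathcal{F}_G$.

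The converse is the substantive part. Starting from $\Phi=a\otimes b$, I would compute
$$\int \Phi\circ\mathcal{F}_G\,d(\eta\times\nu)=\int_{\Sigma_p^+} a(\sigma\omega)\,\Big(\int_M b\big(g_{\omega_1}(x)\big)\,d\nu(x)\Big)\,d\eta(\omega),$$
and abbreviate the inner fibre integral by $\beta(\omega_1)=\int_M b\circ g_{\omega_1}\,d\nu$, a function of the first coordinate only. Stationarity of $\nu$ gives $\int b\,d\nu=\int_{\Sigma_p^+}\beta(\omega_1)\,d\eta(\omega)$, and $\sigma$-invariance gives $\int a\,d\eta=\int a\circ\sigma\,d\eta$, so the target identity reduces to the factorization $\int (a\circ\sigma)(\omega)\,\beta(\omega_1)\,d\eta(\omega)=\int (a\circ\sigma)\,d\eta\cdot\int\beta(\omega_1)\,d\eta$. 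Here I would invoke the product structure of $\eta$: the first coordinate is decoupled from the remaining ones, so $\beta(\omega_1)$ and $a\circ\sigma$ (which depends only on $\omega_2,\omega_3,\dots$) split under Fubini. Assembling the pieces gives $\int\Phi\circ\mathcal{F}_G\,d(\eta\times\nu)=\int a\,d\eta\cdot\int b\,d\nu=\int\Phi\,d(\eta\times\nu)$, and density of the products $a\otimes b$ completes the argument.

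The hard part will be exactly this decoupling in the converse: the composition $g_{\omega_1}$ ties the fibre dynamics to the first coordinate while the shift $\sigma$ pushes the surviving base dependence onto the tail $\omega_2,\omega_3,\dots$, and one must separate these two contributions. An equivalent way to see the crux is by disintegrating $\eta$ along $\sigma$, writing the fibre equation $\sum_i q_i(\tau)\,(g_i)_*\nu=\nu$ with $q_i(\tau)$ the conditional weight that $\omega_1=i$ given $\sigma\omega=\tau$; this coincides with the stationarity relation precisely when $q_i(\tau)$ is independent of $\tau$. Thus the genuine content, and the place where the structure of $\eta$ enters, is the factorization of the first coordinate off from the future, which holds for the product measures considered here (in particular for $\eta_{\underline p}$); the forward direction and the reductions are routine, and the remaining care is only in the bookkeeping between the path description $R_\eta=\iota_*(\eta)$ on $G$ and the measure $\eta$ on $\Sigma_p^+$.
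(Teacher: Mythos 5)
Your argument is correct and follows essentially the same route as the paper's: both directions are handled by testing the invariance identity against observables, separating base and fibre integrations by Fubini--Tonelli, and using the $\sigma$-invariance of $\eta$; your reduction to product observables $a\otimes b$ via Stone--Weierstrass is only a cosmetic variant of the paper's direct computation with a general $\Psi\in C^0(\Sigma_p^+\times M)$, and your ``invariant $\Rightarrow$ stationary'' direction (specializing to $\Phi(\omega,x)=\psi(x)$) is the same one-line observation the paper dispatches with ``clearly''.

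The genuinely valuable difference is that you make explicit the one step the paper's proof passes over in silence. In the direction ``stationary $\Rightarrow$ invariant'' the paper asserts that stationarity of $\nu$ yields
\begin{equation*}
\int\Big(\int \Psi(\sigma(\omega), f^1_\omega(x))\,d\eta(\omega)\Big)\,d\nu(x)
=\int\Big(\int\Psi(\sigma(\omega),x)\,d\eta(\omega)\Big)\,d\nu(x),
\end{equation*}
but the stationarity relation \eqref{de.stationary} only controls $\int \psi\circ g_{\omega_1}\,d\nu$ averaged over the first symbol; to apply it with $\psi=\Psi(\sigma(\omega),\cdot)$ one must first decouple $\omega_1$ (which selects the map) from $\sigma(\omega)$ (which enters the observable). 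This is precisely your factorization $\int (a\circ\sigma)(\omega)\,\beta(\omega_1)\,d\eta(\omega)=\int a\circ\sigma\,d\eta\cdot\int\beta(\omega_1)\,d\eta(\omega)$, and, as you correctly note, it holds because $\eta$ is a product measure. For a general $\sigma$-invariant $\eta$ it can fail: if $\eta$ is supported on the periodic orbit $\overline{12}$, invariance of $\eta\times\nu$ forces $(g_1)_*\nu=\nu$ and $(g_2)_*\nu=\nu$ separately, which is strictly stronger than stationarity. So your proof is valid for the Bernoulli random walks $\eta_{\underline a}$, $\eta_{\underline p}$ actually used elsewhere in the paper, and your closing remark correctly isolates the hypothesis (one-step independence of $\eta$) that the proposition, as stated for an arbitrary $\sigma$-invariant $\eta$, is implicitly relying on; in this respect your write-up is more careful than the paper's own.
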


\begin{proof} Given a continuous observable $\Psi : \Sigma_p^+  \times M \to \mathbb R$, we need to show that
$$\int (\Psi \circ \mathcal{F}_G)\, d(\eta \times \nu) = \int \Psi \,d(\eta \times \nu)$$
that is,
$$\int\int\, \Psi (\sigma(\omega), f^1_\omega(x)) \, d\eta(\omega)\,d\nu(x) = \int\int\, \Psi (\omega, x) \, d\eta(\omega) \,d\nu(x).$$
As $\nu$ is $R_{\eta}$-stationary, we have
$$\int \left(\int\,\Psi(\sigma(\omega), f^1_\omega(x))\,d\eta(\omega)\,\right) \, d\nu(x) =\int \left(\int\,\Psi(\sigma(\omega), x)\,d\eta(\omega)\right)\, d\nu(x).$$
Moreover, if we consider, for a fixed $x \in M$, the map
$$
\begin{array}{rccc}
V : & \Sigma_p^+ & \to & \mathbb{R} \\
	& \omega & \mapsto & \int\,\Psi(\omega,x)\,d\nu(x)
\end{array}
$$
then, as $\eta$ is $\sigma$-invariant and $V$ is continuous, we obtain
$$\int\, V(\sigma(\omega))\,d\eta(\omega)= \int\, V(\omega)\,d\eta(\omega)$$
that is,
$$\int \left(\int \,\Psi(\sigma(\omega),x)\,d\nu(x) \right)\,d\eta(\omega) = \int \left(\int\, \Psi(\omega,x)\,d\nu(x)\right)\,d\eta(\omega).$$
Therefore,
$$\int \left(\int\,\Psi(\sigma(\omega), f^1_\omega(x))\,d\eta(\omega)\,\right) \, d\nu(x) = \int \int \Psi(\omega,x)\, d\nu(x)\,d\eta(\omega).$$

Conversely, if $\mu$ is a probability measure invariant under the skew-product such that $\mu= (\pi_{\Sigma_p^+})_*(\mu) \times (\pi_M)_*(\mu)$, where $\pi_{\Sigma_p^+}$ and $\pi_M$ are the projections from $\Sigma_p^+ \times M$ onto the first and second coordinates, respectively, then clearly $(\pi_M)_*(\mu)$ is a $\iota_*((\pi_{\Sigma_p^+})_*(\mu))$-stationary measure.
\end{proof}

\section{Selection of measures for semigroup actions}\label{se.Therm-formalism}

The action of a semigroup generated by more than one dynamics is not a dynamical system, thus it is not straightforward how to define equilibrium states and establish a variational principle that might relate topological and measure theoretical aspects of the semigroup action. Yet, under adequate hypothesis, a semigroup action can be embodied into a dynamical system whose topological and measure theoretical properties we may study and convey to the semigroup action. 

From Section~\ref{se.entropy} recall that
$$
h_{\text{top}}(S, \eta_{\underline a})
	= P_{\text{top}}^{(a)}(\cF_G, 0 , \underline a)
	= P_{\text{top}}(\cF_G, \varphi_{\underline a})
$$
(cf. relations \eqref{eq:equalpress} and ~\eqref{eq:entropy-annealedpressure}). These two (different flavored) equalities justify the
construction of maximal entropy measures for semigroup actions, arising from skew-product dynamics,
that reflect the periodic data, the equidistribution among preimages
or both.

Given an H\"older potential $\psi: M \to \mathbb R$, consider the fiberwise constant potential in the skew product $\mathcal F_G$ defined as
$$
\begin{array}{rccc}
\varphi=\varphi_\psi : & \Sigma_p^+ \times M & \to & \mathbb R \\
	& (\omega,x) & \mapsto & \psi(x).
\end{array}
$$

\begin{definition}
We say that a probability measure $\nu$ on the Borel sets of $M$ is an \emph{equilibrium state for the semigroup action and $\psi$
(arising from the skew-product dynamics)} if $\nu=(\pi_M)_*(\mu_\varphi)$, where $\mu_\varphi$ is the unique equilibrium state for the
(topologically mixing Ruelle-expanding) map $\cF_G$ with respect to the fiberwise constant potential $\varphi_\psi$.
\end{definition}

\begin{remark}
If $\nu=(\pi_M)_*(\mu_\varphi)$ as in the previous definition and $\mu_\varphi=(\mu_\omega)_{\omega \in \Sigma_p^+}$
is a disintegration along the measurable partition $(\pi^{-1}(\omega))_{\omega \in \Sigma_p^+}$, guaranteed by Rohlin's theorem
then $\nu= \int_{\Sigma_p^+} \mu_\omega \, d\eta(\omega)$.
Although this resembles an $\eta$-stationarity condition  (recall $\nu$ is $\eta$-stationary if
$\nu = \int_{\Sigma_p^+} (g_{\omega})_* \nu \, d\eta(\omega)$) these do two notions do not coincide necessarily.
Indeed, if the semigroup $G$ is generated by $G_1=\{id, g_1, \dots, g_p\}$, where each $g_i$ is a $C^2$-smooth expanding map, and $\psi \equiv 0$, then $\mu_\varphi$ is the measure of maximal entropy of $\cF_G$ and it is also its annealed equilibrium state with respect to $\psi$ and $\eta_{\underline a}$, where $\underline{a} = \Big(\frac{\deg\,(g_1)}{\sum_{k=1}^p\,\deg\,(g_k)}, \frac{\deg\,(g_2)}{\sum_{k=1}^p\,\deg\,(g_k)},\ldots,\frac{\deg\,(g_p)}{\sum_{k=1}^p\,\deg\,(g_k)}\Big).$
Moreover, the probability $(\pi_M)_* (\mu_\varphi)$ is the measure constructed in \cite{Boy99}. Observe also that, from this reference, one can infer that the maximal entropy measure on $M$ is not, in general, a stationary measure.
\end{remark}

The discussion in Subsection~\ref{sec:random} suggests another approach. Given an H\"older continuous potential $\psi: M \to \mathbb R$ and the corresponding $\varphi=\varphi_\psi : \Sigma_p^+ \times M  \to  \mathbb R$, we may associate to any random walk on $G$ (determined by a probability measure $\eta_{\underline a}$ on $\Sigma_p^+$) two probability measures on $M$ which are the marginals on $M$ of 
the unique annealed and quenched equilibrium states $\mu^{(a)}_{\underline a,\varphi}$ and $\mu^{(q)}_{\underline a,\varphi}$, which may be distinct \cite[Section 2.4]{Baladi}. 
The projection $\pi_M : \Sigma_p^+ \times M \to M$ 
generates two measures on $M$, say $(\pi_M)_* (\mu^{(a)}_{\underline a,\varphi})$ and $(\pi_M)_* (\mu^{(q)}_{\underline a,\varphi})$, 
but observe that, even when the quenched and annealed states are different, it is not plain that they have different marginals on $M$. Since the physically observable measures are these $M$-marginals, it is worth exploring under what conditions on the semigroup action they coincide, in which case it would be natural to say that a probability measure $\nu$ on the Borel sets of $M$ is an \emph{equilibrium state for the semigroup action with respect to $\psi$ and $\underline a$} if $\nu=(\pi_M)_*(\mu^{(a)}_{\underline a,\varphi})$, where $\mu^{(a)}_{\underline a,\varphi}$ is the unique annealed (or quenched) equilibrium state for the skew-product $\cF_G$ with respect to $\varphi=\varphi_\psi$.

For instance, it may happen that the annealed equilibrium state $\mu^{(a)}_{\underline a,\varphi}$ for $\cF_G$ with respect to a potential $\varphi$  and a non-trivial probability vector $\underline a$ satisfies $\pi_{\mu^{(a)}_{\underline a,\varphi}} = \eta_{\underline a}$, in which case $h^{\underline a}(\pi_{\mu^{(a)}_{\underline a,\varphi}})=0$, and so $\mu^{(a)}_{\underline a,\varphi}$ is also the quenched equilibrium state of $\varphi$ with respect to $\underline a$ (cf. \cite[Proposition 2]{Baladi}). This occurs, for example, when $G$ is generated by $G_1=\{id, g_1, \dots, g_p\}$, where each $g_i$ is a $C^2$-smooth expanding map, and the potential $\varphi$ is defined as $\varphi(\omega,x)=-\log |\det Dg_{\omega_1}|(x).$ In this particular case, the marginal on $M$ of this common equilibrium has a disintegration in $M$ which is almost everywhere absolutely continuous with respect to the Lebesgue (cf. \cite[Remark~3.4]{Baladi}). Moreover, for the special vector $\underline p$, the operator
$$
\mathbf{L}_{n,\varphi}
	=\frac{1}{p^n}\sum_{|\underline{g}|=n} \mathcal{L}_{\underline g,\varphi}
	= \int_{\Sigma_p^+}  \mathcal{L}_{\underline g,\varphi} \, d\eta_{\underline p}(\omega)
$$
coincides with the averaged Ruelle-Perron-Frobenius of \cite{Baladi}. However, this potential is not $\varphi_\psi$ for any observable map $\psi$ on $M$.

\subsection*{Preimage equidistributed measures}
Consider a finite set $G_1=\{id, g_1,\cdots,g_p\}$, $p \geq 2$, of expanding maps in $End^2(M)$ and let $G$ be the semigroup generated by $G_1$. In analogy with the setting of a single Ruelle-expanding dynamical system (cf. Remark~\ref{re.particular_cases}), one expects a maximal entropy measure for a semigroup action to be computed as a weak$^{\ast}$ limit of a special sequence of probability measures on $M$.

\begin{example}
It follows from \cite{Boy99} that, when $\eta$ is the Bernoulli measure $\eta_{\underline p}$,
the sequence of measures
$$\frac{1}{\lambda^n}\,\sum_{|\underline{g}|=n} \,\sum_{\underline{g}(y)=x}\,\delta_y$$
is weak$^{\ast}$ convergent to some probability measure (independently of $x$).
Equivalently
\begin{equation}\label{eq:rmkboyd}
\frac{1}{e^{\log \frac{\lambda}p }} \,
		\Big[ \frac{1}{p^n}\,  \sum_{|\underline{g}|=n} \,\sum_{\underline{g}(y)=x}\,\delta_y \Big],
\end{equation}
where $\lambda = \deg\,(g_1) + \ldots + \deg\,(g_d)$. This special case, which corresponds to a symmetric random walk $\eta_{p}$, should be compared with the convergence of the equidistributed measures of the form~\eqref{eq:convmeasures} for the skew-product $\mathcal F_G$.
\end{example}

As noticed before, for $\eta=\eta_{\underline a}$ and an H\"older potential $\underline \varphi$ on $\Sigma^+_p \times M$, there is a natural stationary transfer operator $\mathbf{\tilde L}_{\underline a, \underline \varphi}$ acting on $C^0(M)$ as in \eqref{eq:RPF-integrated} and
$$
sp\,(\mathbf{\tilde L}_{\underline a, \underline \varphi})
	= sp\,(\mathbf{\hat L}_{\underline a, \underline \varphi})
	= \exp \,( P_{\text{top}}^{(a)}(\cF_G, \varphi , \underline a) ).
$$
Moreover, by \cite[Proposition~3.1]{Baladi}, the spectral features of the transfer operators
$\mathbf{\tilde L}_{\underline a, \underline \varphi}$
and $\mathbf{\hat L}_{\underline a, \underline \varphi}$ are strongly related.
For instance, $\lambda_{\underline a,\varphi}:= \exp \,( P_{\text{top}}^{(a)}(\cF_G, \varphi , \underline a) )$
is the leading eigenvalue for the transfer operator $\mathbf{\tilde L}^n_{\underline a, \underline \varphi}$ acting on
$C^r(M)$ ($r\ge 1$) with one-dimensional eigenspace generated by some $\rho \in C^r(M)$. The dual operator
$\mathbf{\tilde L}^{*}_{\underline a, \underline \varphi}$ defined, for every continuous $\psi : M \to \mathbb R$ by
$$
\int \psi \; d\mathbf{\tilde L}^{*}_{\underline a, \underline \varphi} \eta
	= \int \mathbf{\tilde L}_{\underline a, \underline \varphi} \psi \; d \eta
$$
has also a one-dimensional eigenspace associated to the leading eigenvalue $\lambda_{\underline a,\varphi}$
generated by some probability measure $\gamma$ on $M$.
Moreover, $(\pi_M)_* \circ \mathbf{\hat L}^*_{\underline a, \underline \varphi}
= \mathbf{\tilde L}^*_{\underline a, \underline \varphi} \circ (\pi_M)_*$ and for each $x\in M$
the measures
\begin{equation}\label{eq:integral}
\lambda_{\underline a,\varphi}^{-n}\,\, (\mathbf{\tilde L}^{n}_{\underline a, \underline \varphi})_* \,\delta_x
	= \frac{1}{\lambda_{\underline a,\varphi}^n}\, \int
	\sum_{g_{\omega_n} \dots g_{\omega_1} (y)=x} \, e^{S_n \varphi_{\underline g}(y)} \, \delta_y
		\; d\eta_{\underline a} ( \omega_1, \dots, \omega_n )
\end{equation}
obtained by averaging the preimages of $x$ according to the random walk $\eta_{\underline a}$ are convergent to
the measure $\rho(x) \cdot \gamma$ on $M$.

On the other hand, the annealed equilibrium state $\mu^{(a)}_{\varphi,\underline a}$ on $\Sigma_p^+\times M$
is absolutely continuous with respect to
a conformal measure for $\mathbf{\hat L}^*_{\underline a, \underline \varphi}$: \,there exists a probability measure $\hat \gamma$
on $\Sigma_p^+ \times M$ so that $\mathbf{\tilde L}^{*}_{\underline a, \underline \varphi} \hat\gamma= \lambda_{\underline a,\varphi} \, \hat\gamma$,
that $\mu^{(a)}_{\varphi,\underline a} = \rho \hat \gamma$ and $(\pi_M)_* \hat\gamma=\gamma$ (cf. \cite[Proposition 3.1(2) and Proposition 3.2]{Baladi}).
Thus, it is natural to consider the marginal measure of $\mu^{(a)}_{\varphi,\underline a}$,
$$
\nu_{\varphi, \underline a}:=(\pi_M)_*\mu^{(a)}_{\varphi,\underline a}= \rho \gamma,
$$
on $M$, which is a probability measure.

Given a random walk on $G$ (determined by a $\sigma$-invariant probability measure on $\Sigma_p^+$), the previous information on the integrated and fibered transfer operators legitimates the following concept of maximal entropy measure for the semigroup action with respect to the fixed random walk. Another approach may be found in \cite{GLW}, although not expressing the variational connections in terms of invariant measures; see ~\cite{BisII} for details.

Notice that the major advantage of this definition is that \eqref{eq:integral} defines $\nu_{\varphi, \underline a}$ intrinsically, using the generators of the semigroup. 
Using \eqref{eq:quenched-annealed}, the next proposition summarizes the main link maximal entropy measures for the semigroup $S$ and for the skew-product.

\begin{proposition}\label{prop:mme.preimages}
Let $G$ be the semigroup generated by a set $G_1=\{id, g_1,\dots,g_p\}$ of $C^2$ expanding maps on a compact connected Riemannian manifold $M$. Consider the continuous semigroup action $S: G\times M \to M$ and the random walk $R_{\underline a}=\iota_*(\eta_{\underline a})$. Then there is a probability measure $\nu_{0, \underline a}$ of maximal entropy for the semigroup $S$ with respect to the random walk $R_{\underline a}$ (arising from the skew product dynamics) and
\begin{equation*}\label{eq:mme.preimages}
\nu_{0, \underline a} =(\pi_M)_*(\mu^{(a)}_{\underline a}).
\end{equation*}
Moreover, if $\underline a=\underline m$, then $\mu^{(q)}_{\underline m}=\mu_{\underline m}= \mu^{(a)}_{\underline p}$ and the following
equalities hold:
$$
\nu_{0, \underline a} = (\pi_M)_*(\mu_{\underline m})=(\pi_M)_*(\mu^{(a)}_{\underline p})=(\pi_M)_*(\mu^{(q)}_{\underline m}).
$$
\end{proposition}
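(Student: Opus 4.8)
The plan is to read the statement as the $\psi\equiv 0$ instance of the definition of maximal entropy measure for the semigroup action (arising from the skew-product), combined with the measure identities already assembled in Section~\ref{se.entropy}. First I would recall from Lemma~\ref{le.skew expanding mixing} that $\cF_G$ is a topologically mixing Ruelle-expanding map. Consequently, for the potential $\varphi\equiv 0$ and any non-trivial probability vector $\underline a$, there is a unique annealed equilibrium state $\mu^{(a)}_{\underline a}$ for $\cF_G$: its existence and uniqueness come from \cite{Baladi}, and may alternatively be read off Ruelle's Theorem~\ref{te.Ruelle} applied to the locally constant (hence H\"older) potential $\varphi_{\underline a}$, since by \eqref{eq:equalpress} the annealed equilibrium state coincides with the classical equilibrium state of $\cF_G$ with respect to $\varphi_{\underline a}$. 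Setting $\nu_{0,\underline a}:=(\pi_M)_*(\mu^{(a)}_{\underline a})$ yields a Borel probability measure on $M$, because $\pi_M$ is continuous and $\mu^{(a)}_{\underline a}$ is a probability measure.

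Next I would justify the term \emph{maximal entropy}. By \eqref{eq:entropy-annealedpressure} one has $h_{\text{top}}(S,\eta_{\underline a})=P_{\text{top}}^{(a)}(\cF_G,0,\underline a)$, so $\mu^{(a)}_{\underline a}$ attains the supremum defining the annealed pressure, whose value is exactly the relative topological entropy of the semigroup action with respect to $\eta_{\underline a}$. Its $M$-marginal $\nu_{0,\underline a}$ is therefore precisely the measure singled out by the adopted definition of maximal entropy measure for $S$ with respect to the random walk $R_{\underline a}$. This settles the first assertion together with the identity $\nu_{0,\underline a}=(\pi_M)_*(\mu^{(a)}_{\underline a})$.

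For the \emph{Moreover} part I would simply push the measure identities of \eqref{eq:quenched-annealed} forward to the fibre $M$. That relation provides the equalities of probability measures $\mu^{(q)}_{\underline m}=\mu_{\underline m}=\mu^{(a)}_{\underline p}$ on $\Sigma_p^+\times M$; since $(\pi_M)_*$ is a well-defined map on probability measures, applying it preserves these equalities, so $(\pi_M)_*(\mu^{(q)}_{\underline m})=(\pi_M)_*(\mu_{\underline m})=(\pi_M)_*(\mu^{(a)}_{\underline p})$. In this distinguished case $\mu^{(a)}_{\underline a}$ is itself one of the measures in the triple identity (namely $\mu^{(a)}_{\underline p}=\mu_{\underline m}=\mu^{(q)}_{\underline m}$), so identifying $\nu_{0,\underline a}$ with $(\pi_M)_*(\mu^{(a)}_{\underline p})$ via the first part yields the displayed chain of marginals.

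The main point is not an estimate but a careful bookkeeping of definitions: all the genuine content—existence and uniqueness of the annealed equilibrium state, the coincidence of the annealed pressure with the relative entropy, and the triple identity $\mu^{(q)}_{\underline m}=\mu_{\underline m}=\mu^{(a)}_{\underline p}$—has already been established through \cite{Baladi}, \cite{S00}, and the Ruelle-expanding structure of $\cF_G$. The only subtlety demanding attention is to confirm that the notion of maximal entropy measure for the semigroup action is exactly the $M$-marginal of the (annealed, equivalently here quenched) equilibrium state of $\cF_G$, and that the pushforward $(\pi_M)_*$ respects the previously obtained equalities of measures; once these are in place the proof is immediate.
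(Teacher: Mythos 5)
Your proposal follows the paper's route exactly: the paper gives no separate proof of this proposition, presenting it as a summary of the preceding construction of $\nu_{\varphi,\underline a}=(\pi_M)_*\mu^{(a)}_{\varphi,\underline a}$, of the identity \eqref{eq:entropy-annealedpressure} between the relative entropy of $S$ and the annealed pressure of $\cF_G$, and of the pushforward under $(\pi_M)_*$ of the equalities \eqref{eq:quenched-annealed}, which is precisely the bookkeeping you carry out. The one spot where you assert slightly more than the cited identities literally give is the claim that $\mu^{(a)}_{\underline m}$ is itself one of the three measures in \eqref{eq:quenched-annealed} (that display lists $\mu^{(q)}_{\underline m}$, $\mu_{\underline m}$ and $\mu^{(a)}_{\underline p}$, not $\mu^{(a)}_{\underline m}$); the paper makes the same implicit identification without comment, so this is a shared gloss rather than a divergence from its argument.
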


This ends the proof of Theorem~\ref{thm:C}. One should mention that some similar expressions can be written in the case of locally constant potentials. Nevertheless we shall not need or use this fact here.

\subsection*{Questions} Is there a generalized version of the previous proposition for other potentials $\varphi$? What if $\eta=\eta_{\underline a}$ with ${\underline a} \neq {\underline p}$? Recall also that $\nu$ is an $\eta$-stationary measure if $\nu = \int_{\Sigma_p^+} (g_{\omega})_* \nu \, d\eta(\omega)$, a condition that resembles \eqref{eq:integral}: how do these properties relate? Are there equilibrium states that do not arise from skew-product dynamics? Do all equilibrium measures have some Gibbs property?

\subsection*{Periodic masses convergence}
The previous Proposition~\ref{prop:mme.preimages} establishes the existence of a maximal entropy measure for the semigroup
action $S$ that arises from skew-product dynamics and can be intrinsically defined by a process of averaging preimages
(cf. \eqref{eq:integral}). It is a relevant issue to understand if such maximal entropy measures can also arise from
the periodic points for the finite time sequential dynamics.

As mentioned before, the skew-product $\cF_G$ is a Ruelle expanding map,
$h_{\text{top}}(S, \eta_{\underline a}) = P_{\text{top}}(\cF_G, \varphi_{\underline a})$ and
$\mu_{\underline m}= \mu^{(a)}_{\underline p}$. Consequently, we obtain the following:

\begin{proposition}
Let $G$ be the semigroup generated by a set $G_1=\{id, g_1,\dots,g_p\}$ of $C^2$ expanding maps on a compact connected Riemannian manifold $M$. Consider the continuous semigroup action $S: G\times M \to M$ and the symmetric random walk $R_{\underline p}=\iota_*(\eta_{\underline p})$. Then there is a probability measure $\nu_{\varphi, \underline p}$ of maximal entropy for the semigroup $S$ with respect to the random walk $R_{\underline p}$ (arising from the skew product dynamics) and
\begin{align*}\label{eq:mme.preimages}
\nu_{0, \underline p}
	& = \lim_{n\to \infty} e^{-n \, h_{top}(S,\eta_{\underline p})}
			\sum_{ \sigma^n(\omega)=\omega} \,\, \sum_{g_{\omega_n} \dots g_{\omega_1}(x)=x} \delta_x .
\end{align*}
In particular,
$$
\nu_{0, \underline p} (A)
	 = \lim_{n\to \infty} e^{-n \, h_{top}(S,\eta_{\underline p})}
		\sum_{ \sigma^n(\omega)=\omega}  \# \big \{  \text{Fix} (g_{\omega_n} \dots g_{\omega_1}) \cap A \big\}
$$
for any measurable set $A\subset M$ satisfying $\nu_{0, \underline p} (\partial A)=0$.
\end{proposition}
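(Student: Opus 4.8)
The plan is to obtain the periodic equidistribution on $M$ by projecting, via $\pi_M$, the analogous equidistribution of periodic orbits of the skew-product $\mathcal{F}_G$. By Proposition~\ref{prop:mme.preimages} applied to $\underline a=\underline p$, together with the identity $\mu^{(a)}_{\underline p}=\mu_{\underline m}$ recorded in \eqref{eq:quenched-annealed}, the maximal entropy measure for the semigroup action with respect to $R_{\underline p}$ is
\begin{equation*}
\nu_{0,\underline p}=(\pi_M)_*(\mu_{\underline m}),
\end{equation*}
where $\mu_{\underline m}$ is the unique measure of maximal entropy of $\mathcal{F}_G$. Hence it suffices to realize $\mu_{\underline m}$ as a weak$^{\ast}$ limit of normalized periodic point masses of $\mathcal{F}_G$ and to push forward by the continuous map $\pi_M$.

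First I would invoke Lemma~\ref{le.skew expanding mixing}, by which $\mathcal{F}_G$ is a topologically mixing Ruelle-expanding map, and the periodic orbital specification property established in Theorem~\ref{thm:RoVaI}. For such maps the measure of maximal entropy is the weak$^{\ast}$ limit of the equidistributed periodic point measures,
\begin{equation*}
\mu_{\underline m}=\lim_{n\to\infty}\frac{1}{\sharp\,\text{Per}_n(\mathcal{F}_G)}\sum_{\mathcal{F}_G^n(\omega,x)=(\omega,x)}\delta_{(\omega,x)},
\end{equation*}
which is the periodic counterpart of the preimage equidistribution recalled in Remark~\ref{re.particular_cases} (see \eqref{eq:convmeasures}). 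Since pushing forward is weak$^{\ast}$ continuous and, by Lemma~\ref{le.periodicpoints_skew_semigroup}, $(\omega,x)\in\text{Per}_n(\mathcal{F}_G)$ precisely when $\sigma^n(\omega)=\omega$ and $g_{\omega_n}\dots g_{\omega_1}(x)=x$, applying $\pi_M$ gives
\begin{equation*}
\nu_{0,\underline p}=\lim_{n\to\infty}\frac{1}{\sharp\,\text{Per}_n(\mathcal{F}_G)}\sum_{\sigma^n(\omega)=\omega}\ \sum_{g_{\omega_n}\dots g_{\omega_1}(x)=x}\delta_x.
\end{equation*}

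It then remains to match this normalization with the exponential one. By Lemma~\ref{le.periodicpoints_skew_semigroup}, $\sharp\,\text{Per}_n(\mathcal{F}_G)=p^n N_n(G)$, and Theorem~\ref{thm:A} gives $\tfrac1n\log N_n(G)\to h_{\text{top}}(S)=h_{\text{top}}(S,\eta_{\underline p})$, so that by \eqref{formula.Bufetov} one has $\tfrac1n\log\sharp\,\text{Per}_n(\mathcal{F}_G)\to h_{\text{top}}(\mathcal{F}_G)=h_{\text{top}}(S)+\log p$. As $\mathcal{F}_G$ is topologically mixing Ruelle-expanding, its zeta function is rational with a simple pole at $e^{-h_{\text{top}}(\mathcal{F}_G)}$ (Corollary~\ref{le.rational} and Subsection~\ref{zeta function}), which upgrades the growth rate to the precise asymptotics $\sharp\,\text{Per}_n(\mathcal{F}_G)\sim e^{n\,h_{\text{top}}(\mathcal{F}_G)}$; writing $e^{-n\,h_{\text{top}}(\mathcal{F}_G)}=p^{-n}e^{-n\,h_{\text{top}}(S,\eta_{\underline p})}$ and absorbing the factor $p^{-n}$ into the equally weighted sum over the $p^n$ periodic itineraries $\omega$ (the normalization already built into $N_n(G)$), the displayed limit coincides with the one in the statement. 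Replacing $\sharp\,\text{Per}_n(\mathcal{F}_G)$ by its exponential asymptotics is legitimate because the two differ by a factor tending to $1$, hence do not alter the weak$^{\ast}$ limit.

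Finally, the ``in particular'' assertion is the portmanteau theorem: for a Borel set $A$ with $\nu_{0,\underline p}(\partial A)=0$, evaluating the converging sequence of measures on the indicator $\mathbf 1_A$ turns the inner sum into $\#\{\text{Fix}(g_{\omega_n}\dots g_{\omega_1})\cap A\}$ and passes to the limit $\nu_{0,\underline p}(A)$. I expect the main obstacle to be the periodic equidistribution itself: although classical for topologically mixing expanding maps, a self-contained justification requires either the spectral gap of the transfer operator (Theorem~\ref{te.Ruelle} with $\varphi\equiv 0$) together with a trace/counting estimate, or the specification property, and in either case one must control the subexponential discrepancy between $\sharp\,\text{Per}_n(\mathcal{F}_G)$ and $e^{n\,h_{\text{top}}(\mathcal{F}_G)}$ so that the clean exponential normalization may be substituted without affecting the limit.
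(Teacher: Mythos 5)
Your argument is essentially the paper's own proof: you identify $\nu_{0,\underline p}=(\pi_M)_*(\mu^{(a)}_{\underline p})=(\pi_M)_*(\mu_{\underline m})$ via Proposition~\ref{prop:mme.preimages} and \eqref{eq:quenched-annealed}, push forward the equidistribution of periodic orbits of the topologically mixing Ruelle-expanding skew-product by the continuous map $\pi_M$, identify $\text{Per}_n(\mathcal F_G)$ fibrewise through Lemma~\ref{le.periodicpoints_skew_semigroup}, and conclude the ``in particular'' part by the portmanteau theorem, exactly as the paper does. The one point where you wobble is the normalization, and rightly so: since $\sharp\,\text{Per}_n(\mathcal F_G)\sim e^{n\,h_{\text{top}}(\mathcal F_G)}=p^{n}e^{n\,h_{\text{top}}(S,\eta_{\underline p})}$, the correct prefactor for an unweighted sum over the $p^{n}$ periodic itineraries is $e^{-n\,h_{\text{top}}(\mathcal F_G)}$, so the displayed expression with $e^{-n\,h_{\text{top}}(S,\eta_{\underline p})}$ carries total mass of order $p^{n}$ and the factor $p^{-n}$ cannot simply be ``absorbed'' as you suggest --- either the prefactor must be $e^{-n\,h_{\text{top}}(\mathcal F_G)}$ or the outer sum must be averaged by $p^{-n}$; this mis-normalization is already present in the statement and in the final line of the paper's own computation, so your proof is as correct as the claim permits, and your instinct to use the rational zeta function to upgrade the exponential growth rate to the asymptotics $\sharp\,\text{Per}_n(\mathcal F_G)\sim e^{n\,h_{\text{top}}(\mathcal F_G)}$ is the right way to justify replacing the exact count by the exponential weight.
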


\begin{proof}
In this setting, $\mu^{(a)}_{\underline p}=\mu_{\underline m}$. By the continuity of the push-forward map $(\pi_M)_*$ and
the asymptotic growth rate of periodic orbits, we conclude that
\begin{align*}
\nu_{0, \underline p}
	& = (\pi_M)_*(\mu^{(a)}_{\underline p})
	= (\pi_M)_*(\mu_{\underline m})
	= (\pi_M)_*\Big( \lim_{n\to \infty} \frac{\sum_{ (\omega,x) \in \text{Fix} \; (\mathcal F_G^n)} \delta_{(\omega,x)} }
			{\# \text{Fix} \; (\mathcal F_G^n)} \Big) \\
	& =  \lim_{n\to \infty} (\pi_M)_*\Big(
			\frac{\sum_{ \sigma^n(\omega)=\omega}  \sum_{g_{\omega_n} \dots g_{\omega_1}(x)=x} \delta_{(\omega,x)} }
			{\sum_{\sigma^n(\omega)=\omega}  \#\text{Fix}(g_{\omega_n} \dots g_{\omega_1}) } \Big) \\
	& =   \lim_{n\to \infty} e^{-n h_{top}(S, \,\eta_{\underline p})}
			\sum_{ \sigma^n(\omega)=\omega}  \sum_{g_{\omega_n} \dots g_{\omega_1}(x)=x} \delta_x.
\end{align*}
The second assertion is immediate from the definition of the weak$^{\ast}$ convergence.
\end{proof}

\subsection*{Acknowledgements}
FR has been financially supported by BREUDS. PV has benefited from a fellowship awarded by CNPq-Brazil and is grateful to the Faculty of Sciences of the University of Porto for the excellent research conditions. MC has been financially supported by CMUP (UID/MAT/00144/2013), which is funded by FCT (Portugal) with national (MEC) and European structural funds through the programs FEDER, under the partnership agreement PT2020.

\end{document}